
\pdfoutput=1
\documentclass{numapde-preprint}

\usepackage{numapde-semantic}
\usepackage{numapde-style}
\usepackage{numapde-local}

\addbibresource{numapde-local.bib}
\addbibresource{numapde.bib}

\hypersetup{
	pdftitle={Optimal Control of Hughes' Model for Pedestrian Flow via Local Attraction},
	pdfauthor={Roland Herzog, Jan-Frederik Pietschmann, Max Winkler},
	pdfkeywords={nonlinear transport, Eikonal equation, ODE-PDE coupling, optimal control, pedestrian motion}
}

\title{Optimal Control of Hughes' Model for Pedestrian Flow via Local Attraction}
\subtitle{}
\shorttitle{Optimal Control of Hughes' Model}

\author{Roland Herzog\thanks{Technische Universität Chemnitz, Faculty of Mathematics, 09107 Chemnitz, Germany (\email{roland.herzog@mathematik.tu-chemnitz.de}, \url{https://www.tu-chemnitz.de/mathematik/part_dgl/people/herzog}, \orcid{0000-0003-2164-6575}, \email{jfpietschmann@mathematik.tu-chemnitz.de}, \url{https://www.tu-chemnitz.de/mathematik/invpde/}, \orcid{0000-0003-0383-8696}, \email{max.winkler@mathematik.tu-chemnitz.de}, \url{https://www.tu-chemnitz.de/mathematik/part_dgl/people/winkler}, \orcid{0000-0002-5292-2280}).}
\and
Jan-Frederik Pietschmann\footnotemark[1]
\and
Max Winkler\footnotemark[1]}
\shortauthor{R. Herzog, J.-F. Pietschmann and M. Winkler}

\dedication{}

\IfFileExists{numapde-uncolorizeHyperrefIfChanges.sty}{\RequirePackage{numapde-uncolorizeHyperrefIfChanges}}{}

\begin{document}
\maketitle

\begin{abstract}
We discuss the control of a human crowd whose dynamics is governed by a regularized version of Hughes' model, cf.~Hughes: \eqq{A continuum theory for the flow of pedestrians}. \emph{Transportation research part~B: methodological}, 36 (2002).
We assume that a finite number of agents act on the crowd and try to optimize their paths in a given time interval. 
The objective functional can be general and it can correspond, for instance, to the desire for fast evacuation or to maintain a single group of individuals. 
We provide an existence result for the forward model, study differentiability properties of the control-to-state map, establish the existence of a globally optimal control and formulate optimality conditions.\end{abstract}

\begin{keywords}
nonlinear transport, Eikonal equation, ODE-PDE coupling, optimal control, pedestrian motion\end{keywords}

\begin{AMS}
\href{https://mathscinet.ams.org/msc/msc2010.html?t=35Q93}{35Q93}, \href{https://mathscinet.ams.org/msc/msc2010.html?t=49J20}{49J20}, \href{https://mathscinet.ams.org/msc/msc2010.html?t=35K20}{35K20}, \href{https://mathscinet.ams.org/msc/msc2010.html?t=92D50}{92D50}
\end{AMS}

\section{Introduction}\label{sec:intro}
The starting point of this work is Hughes' model for the movement of a (large) crowd of pedestrians introduced in \cite{Hughes:2002:1}. 
Its unknowns are the density $\rho = \rho(t,x)$ and the potential $\phi = \phi(t,x)$ functions for $x \in \Omega \subset \R^2$ and $t \in (0,T)$.
With the space-time cylinder denoted by $Q_T \coloneqq (0,T) \times \Omega$, the model reads 
\begin{subequations}
	\label{eq:hughes}
	\begin{alignat}{2}
		\label{eq:hughes1}
		\partial_t\rho - \nabla\cdot\paren[auto](){\rho \, f(\rho)^2 \, \nabla \phi} 
		&
		= 
		0 
		&
		\quad 
		&
		\text{in }Q_T,
		\\
		\label{eq:hughes2}
		\abs{\nabla\phi} 
		&
		= 
		\frac{1}{f(\rho)} 
		&
		&
		\text{in } Q_T.
	\end{alignat}
\end{subequations}
In the simplest case, the model is supplemented with homogeneous Dirichlet boundary conditions for $\phi$ and $\rho$. 
Due to the hyperbolic nature of the first equation, the boundary conditions for $\rho$ have to be posed in a suitable sense (\ie, on a generalized inflow part involving the function $f$). 
While many models for pedestrian dynamics are microscopic in the sense that they provide constitutive laws for the motion of each pedestrian (\eg, systems of ODEs or cellular automata or the social force model, cf.\ \cite{BursteddeKlauckSchadschneiderZittartz:2001:1,HelbingMolnar:1995:1}), Hughes' model starts from a macroscopic approach. 
It is based on the following three assumptions:
\begin{enumerate} 
	\item The velocity $v$ of the pedestrians is determined by the
	density $\rho$ of the surrounding pedestrian flow and the behavioral
	characteristics of the pedestrians only. Denoting the movement
	direction by $u\in \R^2$ there holds
		\begin{equation*}
			v = f(\rho) \, u
			,
			\quad 
			\abs{u}
			=
			1
			.
		\end{equation*}
	\item Pedestrians have a common sense of the task (called potential $\phi$), \ie, they aim to reach their common destination by
		\begin{equation*}
			u = -\frac{\nabla\phi}{\abs{\nabla\phi}}.
		\end{equation*}
	\item Pedestrians seek to minimize their (accurately) estimated travel time, but modify their velocity to avoid high densities. The potential is thus a solution of the Eikonal equation
		\begin{equation*}
			\abs{\nabla\phi}=\frac{1}{f(\rho)}.
		\end{equation*}
\end{enumerate}
Combining these rules yields the model \eqref{eq:hughes}. 
For brevity we write
\begin{equation}
	\label{eq:movement_direction}
	\beta(\rho,\phi) 
	\coloneqq
	f(\rho)^2\,\nabla\phi
	.
\end{equation}
Due to the previous explanations it is clear that $\restr{-\beta(\rho,\phi)}{(t,x)}$ is the direction of the individuals in the point $(t,x)\in Q_T$.

The analysis of this system is quite involved since the derivative of $\phi$, being the viscosity solution to \eqref{eq:hughes2}, has jump discontinuities on a set that depends on $\rho$ and is not known a~priori. 
Since we are going to consider optimal control problems, we shall focus on a regularized version of the forward model \eqref{eq:hughes}, given by
\begin{subequations}
	\label{eq:hreg}
	\begin{alignat}{2}
		\label{eq:hreg1}
		\partial_t\rho - \nabla \cdot\paren[big](){\rho\,\beta(\rho,\phi)} 
		&= 
		\eps \, \laplace \rho
		&
		\quad 
		&
		\text{in }Q_T,
		\\ 
		\label{eq:hreg2}
		-\delta_1\,\laplace \phi + \abs{\nabla\phi}^2 
		&=
		\frac{1}{f(\rho)^2+\delta_2}
		&
		\quad 
		&
		\text{in }Q_T
		.
	\end{alignat}
\end{subequations}
Here $\varepsilon$, $\delta_1$ and $\delta_2$ are positive and fixed regularization parameters.
The model is supplemented with initial conditions
\begin{align}\label{eq:hreg_initial}
	\rho(x,0)  
	= 
	\rho_0(x)
	\quad 
	\text{in } \Omega
\end{align}
and mixed boundary conditions
\begin{equation}\label{eq:forward_bc}
	\begin{aligned}
		-\paren[big](){\eps\,\nabla\rho  + \rho\,\beta(\rho,\phi)} \cdot n 
		&
		=
		\eta\,\rho 
		,
		& 
		\phi 
		&
		= 
		0 
		&
		& 
		\text{on } \Sigma_\textup{D}
		,
		\\
		\paren[big](){\eps\,\nabla\rho  + \rho\,\beta(\rho,\phi)} \cdot n 
		&
		=
		0 
		,
		& 
		\nabla \phi \cdot n 
		&
		=
		0 
		&
		&
		\text{on } \Sigma_\textup{W}
		,
	\end{aligned}
\end{equation}
where we assume that the boundary consists of parts which act as \eqq{doors} $\partial\Omega_\textup{D}$, at which pedestrians can exit with a given outflow velocity $\eta>0$, and \eqq{walls} $\partial\Omega_\textup{W}$.
We assume $\partial\Omega_\textup{D} \cup \partial\Omega_\textup{W} = \partial\Omega$ and $\partial\Omega_\textup{D} \cap \partial\Omega_\textup{W} = \emptyset$ and set $\Sigma_\textup{W} \coloneqq (0,T) \times \partial\Omega_\textup{W}$ and $\Sigma_\textup{D} \coloneqq(0,T) \times \partial\Omega_\textup{D}$.

\subsection{Optimal Control Problem}
\label{subsec:optimal_control_problem}
Our optimal control problem is based on the following scenario.
We assume that there is a small, given number $M>0$ of agents (guides), who are able to locally influence the motion of pedestrians in their vicinity. 
Think, for instance, of tourist guides or marked security personnel at large sports events. 
The $i$-th agent's position is described by a function $x_i(t) \in \R^2$,
$i=1,\ldots, M$, and the interaction is modeled by a radially symmetric and
decreasing kernel $K(x) = k(\abs{x})$, which enters \eqref{eq:hreg1} as an
additional potential given by
\begin{equation}
	\label{eq:potential_term}
	\phi_K(\bx;x) 
	= 
	\sum_{i=1}^M K\paren[big](){x - x_i(t)}
	=
	\sum_{i=1}^M k\paren[big](){\abs{x - x_i(t)}}
	.
\end{equation}
Here and throughout, $\bx =  \bx(t) = (x_1(t), \ldots, x_M(t))^\transp$ is the function collecting all agent positions.
Furthermore, we need to modify the model to insure that the maximal velocity of the crowd is still normalized to the velocity model $f(\rho)$ despite the agents' presence. 
Indeed, for the unregularized model, 
\begin{equation*}
	f(\rho)^2\abs{\nabla\phi } 
	= 
	f(\rho) 
\end{equation*}
holds.
When $\phi$ is replaced by $\phi + \phi_K$, this is no longer true. 
Thus we explicitly normalize the transport direction, \ie, instead of \eqref{eq:movement_direction}, we define the transport velocity by
\begin{equation}
	\beta(\rho,\phi,\bx)
	\coloneqq
	f(\rho) \, h\paren[big](){\nabla (\phi + \phi_K(\bx;\cdot))}
	.
\end{equation}
Here, $h$ is a smoothed projection onto the unit ball. 
Throughout this article we will use
\begin{equation*}
	h(\by) 
	= 
	\minepsilon \paren[auto]\{\}{1,\abs{\by}} \frac{\by}{\abs{\by}}
\end{equation*}
with $\minepsilon$ a fixed smooth approximation of the minimum function.

We assume that the agents move with maximal possible velocity towards a prescribed directions
$u_i(t) \in \R^2$, $i=1,\ldots,M$, which act as control in the system. 
Since the agents are also part of the crowd, their effective velocity will depend on the surrounding density in the same way as it does for all other individuals in the crowd. 
We therefore assume $\abs{u_i(t)} \le 1$ and the law of motion for the agents will be
\begin{align}\label{eq:ODE}
	\dot x_i(t) 
	= 
	f\paren[big](){\overline\rho(t,x_i(t))}\,u_i(t),\quad i=1,\ldots, M,
\end{align}
where $\overline\rho$ is a suitable extension of $\rho$ from $\Omega$ to $\R^2$ which will be detailed later.
While this extension is necessary to ensure the existence of solutions since the agents may leave the domain~$\Omega$ on which $\rho$ is defined, the precise choice of the extension is clearly irrelevant in terms of modeling.
Notice also that the ODE \eqref{eq:ODE} does not prevent agents from walking through walls or to attract people from behind a wall. The first issue can be avoided by imposing additional state constraints in an optimal control problem.

The complete forward system which we are going to consider finally reads 
\begin{subequations}
	\label{eq:forward_system}
	\begin{alignat}{2}
		\label{eq:forward_system1}
		\partial_t\rho - \nabla\cdot\paren[big](){\rho\,\beta(\rho,\phi,\bx)} 
		&
		= 
		\eps \, \laplace \rho
		&
		&
		\quad
		\text{in } Q_T,
		\\
		\label{eq:forward_system2}
		-\delta_1\,\laplace \phi + \abs{\nabla\phi}^2 
		&
		= 
		\frac{1}{f(\rho)^2+\delta_2}
		&
		&
		\quad
		\text{in } Q_T, 
		\\
		\label{eq:forward_system3}
		\dot x_i(t) 
		&
		=  
		f\paren[big](){\overline\rho(t,x_i(t))} \, u_i(t)
		&
		&
		\quad
		\text{for } t\in(0,T), \quad i=1,\ldots,M,
	\end{alignat}
\end{subequations}
together with initial condition \eqref{eq:hreg_initial} on $\rho$ and 
\begin{align}\label{eq:ODE_init}
	x_i(0) =  x_{i,0}
	\quad 
	\text{ for } i = 1,\ldots, M,
\end{align}
as well as boundary conditions \eqref{eq:forward_bc}.

The aim of the present paper is to investigate several optimal control problems for this coupled system. We seek an optimal control function $\bu$ such that the solution triple $\by = (\rho,\phi,\bx)$ is optimal in a certain sense. Depending on the application in mind it remains to define a suitable objective functional. We particularize the following two examples:

\begin{itemize}
\item \textit{Minimal evacuation time}:
In this case one seeks to minimize the time required for the evacuation of a 
room. 
The exits where individuals can leave the room are located at the boundary 
parts $\partial\Omega_\textup{D}$ modeling doors.
As time-optimal control problems with PDEs are rather challenging, see, \eg, 
\cite{BonifaciusPieperVexler:2019:2,Knowles:1982:1,Schittkowski:1979:1,ZhengYin:2014:1},
 we consider a simpler but closely related model. 
We fix a reasonably large final time $T>0$ and minimize 
\begin{equation}
\label{eq:objective_optimal_evacuation}
J(\rho,\phi,\bx;\bu)
\coloneqq
c_1 \int_\Omega \rho(T,x) \dx + c_2 \int_0^T\int_\Omega t \, \rho(t,x) \dx \dt
+ \frac{\alpha}{2T}\sum_{i=1}^M  \norm{u_i}_{H^1(0,T)}^2
\end{equation}
with weighting parameters $c_1,c_2>0$ and a regularization parameter $\alpha>0$.
The first term in $J$ penalizes individuals remaining in the room at time~$T$.
The second term encourages individuals to leave the room as early as possible.
The last terms provides the required regularity for the control variables, 
so that the forward system \eqref{eq:forward_system} is well-defined.
From the modeling point of view, these terms also avoid unrealistic 
trajectories of the agents.
\item\textit{Optimal binding of a crowd}: 
In some applications it might be desired to keep the group of individuals 
together, \ie, trying to maintain a single group during an evacuation. 
This is also motivated by a similar approach which has been used to model the 
repulsive interaction of dogs in a flock of sheeps, see 
\cite{BurgerPinnauRothTotzeckTse:2016:1}. 
To this end, we define the center of mass and variance of $\rho$ as
\begin{equation*}
E_\rho(t) 
\coloneqq 
\frac{1}{M(t)}\int_\Omega x \rho(t,x) \dx
\quad 
\text{and} 
\quad 
V_\rho(t) 
\coloneqq
\frac{1}{M(t)}\int_\Omega \rho \abs{x-E_\rho(t)}^2 \dx
,
\end{equation*}
with total mass $M(t) = \int_\Omega \rho(t,x) \dx$.
A crowd is optimally kept together when the functional 
\begin{equation}
\label{eq:objective_minimal_variance}
J(\rho,\phi,\bx;\bu) 
\coloneqq
\frac{1}{2T} \int_0^T V_\rho(t) \dt + \frac\alpha{2T}\sum_{i=1}^M  
\norm{u_i}_{H^1(0,T)}^2
\end{equation}
is minimized.
\end{itemize}
Clearly, it is also possible to use a combination of the objective functionals 
\eqref{eq:objective_optimal_evacuation} and 
\eqref{eq:objective_minimal_variance}.

\subsection{Related Work}\label{sec:related}

We briefly review the literature regarding the analysis of the original Hughes 
model and related optimal control problems.

A first contribution on existence results for the Hughes model for the one-dimensional case is 
\cite{DiFrancescoMarkowichPietschmannWolfram:2011:1}. 
There it was shown, starting from the regularized version \eqref{eq:hreg}, that in the limit $\eps \to 0$ a suitable unique entropy solution~$\rho$ exists. 
The proof is based on a vanishing viscosity argument and Kruzkov's doubling of variables technique to show uniqueness. 
The results were complemented by more detailed studies on the unregularized problem, also numerically. 
For instance generalizations to higher spatial dimensions can be found in \cite{ColomboGaravelloLecureuxMercier:2012:1}, even for a slightly more general class of models. 
Further articles examine Riemann-type solutions to the unregularized problem; see \cite{AmadoriDiFrancesco:2012:1,ElKhatibGoatinRosini:2013:1,ElKhatibGoatinRosini:2014:1,DiFrancescoFagioliRosiniRussoAnd:2017:1} in one spatial dimension. 
As far as modeling is concerned, slightly different models were derived in \cite{BurgerDiFrancescoMarkowichWolfram:2014:1} based on a mean field games approach.
In \cite{CarrilloMartinWolfram:2016:1}, a modified approach using multiple local potentials $\phi_i$ instead of one global potential $\phi$ is introduced, removing the possibly unrealistic assumption that every pedestrian has complete information of the entire crowd. 
Moreover, in \cite{CarliniFestaSilvaWolfram:2016:1}, a discrete pedestrian model in a graph network is studied.

From a broader perspective, the optimal control of \eqref{eq:forward_system} 
falls into the class of the optimal control of coupled ODE-PDE systems.
Such problems, with models from a range of different applications have been 
analyzed, for instance, in 
\cite{ChudejPeschWaechterSachsBras:2009:1,WendlPeschRund:2010:1,KimmerleGerdts:2016:1,KimmerleGerdtsHerzog:2018:1,HoltmannspoetterRoeschVexler:2020:1}.
Also in the context of pedestrian dynamics a couple of contributions exist.
We want to mention \cite{AlbiBonginiCristianiKalise:2016:1} where a closely related microscopic model with crowds consisting of a (fixed) number of leaders and followers is studied. 
The interaction between individuals is a short range retraction and a long 
range attraction.
While leaders are not visible to the remaining crowd, they still influence it by taking part in these interactions. Then an optimal control problem arises as an external force acts on the leaders. 
The authors also consider, in the limit of many individuals (grazing interaction limit), macroscopic Boltzmann type equations for this interaction, while the number of leaders remains fixed and finite.
Similar studies can be found in \cite{BurgerPinnauRothTotzeckTse:2016:1} where external agents act as control. Again, they start from a microscopic ODE model and subsequently obtain a 
continuous model for the uncontrolled population by means of a mean field 
limit. For a general overview on interacting particle systems and control, we 
refer the reader to \cite{PinnauTotzeck:2018:1}.

Closer to our approach is the work of \cite{Borsche:2015:1}. There, a system of 
hyperbolic conservation laws for the density of different pedestrian groups is 
coupled to ODEs accounting for agents. Due to the low regularity of solutions 
to the hyperbolic equations, a regularization in the ODEs, similar to Lemma 
\ref{lem:ODE_reg} in our case, is used. See also 
\cite{BorscheKlarKuehnMeurer:2014:1,Borsche:2019:1} for a similar approach in 
different settings.

This paper is organized as follows. 
In \cref{sec:prelim} we collect the required notation, introduce some assumptions and state a precise existence and uniqueness result for the regularized system \eqref{eq:forward_system}. 
The full forward system involving also the ordinary differential equation \ref{eq:ODE} is investigated in \cref{sec:forward_system} and the linearized forward system in \cref{sec:linearized_system}.
The latter is required to establish the differentiability of the control-to-state map, which in turn is the basis of optimality conditions.
Based on this we discuss the optimal control problem in \cref{sec:opti} and derive first-order necessary optimality conditions. 
The presentation of numerical results will be postponed to a forthcoming publication.

\section{Mathematical Preliminaries}
\label{sec:prelim}

Let us first state the assumptions on the domain and data.
\begin{enumerate}[label=(A\arabic*)]
	\item 
		\label{assumption:A1}
		$\Omega \subset \R^2$ is an open, bounded domain with $C^4$-boundary $\partial\Omega$.
	\item 
		\label{assumption:A2}
		There exist two measurable sets $\partial\Omega_\textup{D}, \partial\Omega_\textup{W} \subset \partial\Omega$ \st $\partial\Omega_\textup{D} \cup \partial\Omega_\textup{W} = \partial\Omega$ and $\partial\Omega_\textup{D} \cap \partial\Omega_\textup{W} = \emptyset$.
		Moreover, $\partial\Omega_\textup{D}$ has positive measure with respect to the Lebesgue measure on $\partial\Omega$.

	\item 
		\label{assumption:A3}
		The initial density satisfies $\rho_0\in W^{3/2,4}(\Omega)$ and $0 \le \rho_0 \le 1$ \ale in $\Omega$.

	\item  
		\label{assumption:A4}
		There holds $f \in W^{3,\infty}(\R) \cap C_c(\R)$ with $f(0) = 1$, $f(1) = 0$ and $f(\rho)>0$ for all $\rho \in [0,1)$. 
\end{enumerate}

Moreover, we require some assumptions on the potential functions \eqref{eq:potential_term} of the agents, which depend on the kernel~$K$.
\begin{enumerate}[label=(K\arabic*)]
	\item
		\label{assumption:K1}
		The kernel $K \colon \R^2 \to \R$ is radially symmetric, \ie, $K(x) = k(\abs{x})$, where $k \colon [0,\infty) \to \R$ is nonnegative and decreasing.

	\item 
		\label{assumption:K2}
		The kernel satisfies $K \in W^{3,\infty}(\R^2)$.
\end{enumerate}

Finally, we consider an assumption on the velocity controls of the agents.
\begin{enumerate}[label=(C\arabic*)]
	\item
		\label{assumption:C1}
		There holds $u_i \in L^\infty(0,T;\R^2)$ and $\norm{u_i}_{L^\infty(0,T)} \le 1$ for  $i = 1, \ldots, M$.
\end{enumerate}
Throughout this article we frequently exploit the boundedness and Lipschitz continuity of the functions $K$, $f$, $h$ and $g(x) \coloneqq x\,f(x)$ and its derivatives. When doing so, we denote the bounds and Lipschitz constants by $C_K$, $C_f$, $C_h$ and $C_g$ respectively.\ $L_K$, $L_f$, $L_h$ and $L_g$.

\begin{remark}[Assumptions]
	\begin{enumerate}
		\item 
			Our results extend to the case $d=3$ upon adopting the Sobolev embeddings used in several places.
		\item 
			Assumption \cref{assumption:A2} essentially means that the door and wall parts of the boundary \eqq{do not meet}, \ie, we do not consider a truly mixed boundary value problem in order to avoid technical conditions ensuring sufficient regularity of solutions. 
			\cref{assumption:A2} can be replaced, \eg, in two spatial dimensions, by suitable angle conditions on the points where the two parts of the boundary meet.
			The interested reader is referred to results in \cite{Grisvard:2011:1}.
		\item 
			The optimal regularity for the initial datum $\rho_0$ is the Besov space $B_{pp}^{2-2/p}(\Omega)$; see \cite{DenkHieberPruess:2007:1}. 
		\item 
			While in Assumption \cref{assumption:A4} $f$ is defined on all of $\R$, as far as the modeling is concerned, only $\restr{f}{[0,1]}$ is relevant.
			Indeed, we will later see that the solution to \eqref{eq:forward_system1} satisfies $0\le \rho \le 1$.
		\item 
			A reasonable choice in \cref{assumption:K1} and \cref{assumption:K2} is the kernel function
			\begin{equation}
				\label{eq:kernel_function}
				K(x-x_i(t)) 
				= 
				\begin{cases}
				s\exp\paren[auto](){-\frac{\eta^2}{\eta^2-\abs{x-x_i(t)}^2}}, &\text{if}\ \abs{x-x_i(t)} < \eta,\\
				0, &\text{otherwise},
				\end{cases}
			\end{equation}
			where $s > 0$ is an intensity factor and $\eta > 0$ is related to an attraction radius.
	\end{enumerate}
\end{remark}

\textbf{Notations}. 
The space-time cylinder and its lateral surface are denoted by $Q_T \coloneqq (0,T) \times \Omega$ and $\Sigma_T = (0,T) \times \partial\Omega$, respectively.
The boundary surface can be divided into
\begin{equation*}
	\Sigma_{\textup{D},T} = (0,T) \times \partial\Omega_\textup{D}
	\text{ and }
	\Sigma_{\textup{W},T} = (0,T) \times \partial\Omega_\textup{W}
	.
\end{equation*}
The (Frobenius) inner product of two matrices $A, B \in \R^{n\times n}$ is denoted by
\begin{equation*}
	A \dprod B 
	\coloneqq
	\trace(A^\transp B)
	=
	\sum_{i,j=1}^n a_{ij}b_{ij}
	.
\end{equation*}
The Jacobian of a function $h \colon \R^2 \to \R^2$ is denoted by $Dh$ and the Hessian of a function $u \colon \R^2 \to \R$ is denoted by $\nabla^2u$.
Furthermore, $\eta_\gamma\in C^\infty_c(\R^2)$ is a standard mollifier, see, \eg, Ch.~4.4 in \cite{Brezis:2011:1}, \ie, a function satisfying
\begin{align}\label{eq:def_mollifier}
	\supp \eta_\gamma \subset \closure{B_{\gamma}(0)}
	\quad 
	\text{and}
	\quad 
	\int_{\R^2} \eta_\gamma \dx 
	= 
	1
	.
\end{align}
Note that 
\begin{equation}\label{eq:convergence_mollification}
	\norm{f - \eta_\gamma \ast f}_{C(\closure{\Omega})} 
	\to 
	0
	\text{ as }
	\gamma 
	\to 
	0
\end{equation} for every continuous function $f \in C(\closure{\Omega})$, cf. Prop.~4.21 in \cite{Brezis:2011:1}.

Finally, we introduce the subspace of $H^1(\Omega)$ incorporating the Dirichlet boundary conditions as
\begin{equation*}
	H^1_\textup{D}(\Omega)
	\coloneqq 
	\setDef{v \in H^1(\Omega)}{v = 0 \text{ \ale on } \partial\Omega_\textup{D}}
\end{equation*}
and, for $p\in [1,\infty]$, we denote the subspace of $W^{2,p}(\Omega)$ fulfilling the boundary conditions \eqref{eq:forward_bc} by
\begin{equation*}
	W^{2,p}_\text{ND}(\Omega)
	\coloneqq 
	\setDef{v\in W^{2,p}(\Omega)}{v = 0 \text{ \ale on } \partial\Omega_\textup{D}, \nabla v \cdot n = 0 \text{ \ale on } \partial\Omega_\textup{W}}
	.
\end{equation*}

For time-dependent functions we introduce, for $p \in (1,\infty)$ and $r,s \in \N_0$, the spaces
\begin{equation*}
	W^{r,s}_p(Q_T) 
	\coloneqq
	L^p(0,T;W^{r,p}(\Omega)) \cap W^{s,p}(0,T;L^p(\Omega))
	,
\end{equation*}
equipped with the natural norm $\paren[auto](){\norm{\cdot}_{L^p(0,T;W^{r,p}(\Omega))}^p + \norm{\cdot}_{W^{s,p}(0,T;L^p(\Omega))}^p}^{1/p}$.  
Spaces with non-integral $r$ and $s$ are defined, as usual, as (real) interpolation spaces. 
Of particular interest in our application is the space $W^{2,1}_p(Q_T)$ with $p > d = 2$, which fulfills the embedding
\begin{equation}\label{eq:w12q_embedding}
	W^{2,1}_p(Q_T) \embeds 
	C([0,T];W^{1,p}(\Omega)) \embeds 
	C(\closure{Q_T})
	.
\end{equation}
This is needed in order to allow point evaluations of the density~$\rho$, required in the ordinary differential equation \eqref{eq:forward_system3}. 
Finally, for functions from the Hölder space $C^{1,\alpha}(\Omega)$ we introduce the norm 
\begin{equation*}
	\norm{u}_{C^{1,\alpha}(\Omega)} 
	\coloneqq
	\norm{u}_{C^1(\Omega)} + \max _{\abs{\beta}=1} \abs{D^{\beta} u}_{C^{0, \alpha}(\Omega)}
	,
	\quad
	\text{where } 
	\abs{u}_{C^{0, \alpha}(\Omega)} 
	= 
	\sup _{x \neq y \in \Omega} \frac{\abs{u(x)-u(y)}}{\abs{x-y}^{\alpha}}.
\end{equation*}
In the following we collect some important properties of the function spaces used in this article. 
\begin{lemma}\label{lem:interpolation}
	For each $\theta \in [0,1]$, $p \in (1,\infty)$ and $0\le s < r$, the continuous embedding
	\begin{equation*}
		W^{1,p}(0,T;W^{s,p}(\Omega))\cap L^p(0,T;W^{r,p}(\Omega)) \embeds
		W^{\theta,p}(0,T; W^{\theta\,s + (1-\theta)\,r,p}(\Omega)).
	\end{equation*}
	holds.
\end{lemma}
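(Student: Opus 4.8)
The plan is to obtain the embedding from abstract vector-valued interpolation theory, reducing everything to a classical interpolation identity for vector-valued Sobolev spaces. Set $E_0 \coloneqq W^{s,p}(\Omega)$ and $E_1 \coloneqq W^{r,p}(\Omega)$, so that $E_1 \embeds E_0$ since $s < r$, and abbreviate the space on the left-hand side by $Y \coloneqq W^{1,p}(0,T;E_0)\cap L^p(0,T;E_1)$. By simply dropping one of the two norms defining $Y$, we have the continuous embeddings $Y \embeds L^p(0,T;E_1)$ and $Y \embeds W^{1,p}(0,T;E_0)$. Hence the inclusion map is a bounded operator from the trivial interpolation couple $(Y,Y)$ into the couple $\bigl(L^p(0,T;E_1),\,W^{1,p}(0,T;E_0)\bigr)$, and the interpolation property of the real method yields, for every $\theta\in[0,1]$,
\[
Y = (Y,Y)_{\theta,p} \embeds \bigl(L^p(0,T;E_1),\,W^{1,p}(0,T;E_0)\bigr)_{\theta,p},
\]
with equivalent norms on the left-hand side. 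It therefore remains to identify the space on the right.

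This identification uses two standard facts. First, for vector-valued Sobolev spaces over the bounded interval $(0,T)$ one has the intermediate-derivatives (or mixed-derivative) theorem
\[
\bigl(L^p(0,T;E_1),\,W^{1,p}(0,T;E_0)\bigr)_{\theta,p}
=
W^{\theta,p}\bigl(0,T;(E_1,E_0)_{\theta,p}\bigr),
\]
see the monographs of Lions--Magenes, Triebel and Amann, as well as \cite{DenkHieberPruess:2007:1}. Second, the target space is identified by the reiteration theorem together with the definition of the fractional-order Sobolev spaces as real interpolation spaces of integer-order ones: since both $W^{r,p}(\Omega)$ and $W^{s,p}(\Omega)$ arise as real interpolation spaces between $L^p(\Omega)$ and $W^{N,p}(\Omega)$ for $N$ sufficiently large, reiteration gives
\[
(E_1,E_0)_{\theta,p}
=
\bigl(W^{r,p}(\Omega),\,W^{s,p}(\Omega)\bigr)_{\theta,p}
=
W^{(1-\theta)r+\theta s,\,p}(\Omega).
\]
Combining the three displays yields $Y \embeds W^{\theta,p}\bigl(0,T;W^{\theta s+(1-\theta)r,p}(\Omega)\bigr)$, which is the assertion.

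The only step that is not pure bookkeeping is the mixed-derivative identity, which I regard as the technical heart of the argument (though it is available off the shelf). Its proof extends functions from $(0,T)$ to all of $\R$ by a reflection operator that is simultaneously bounded $L^p(0,T;E_1)\to L^p(\R;E_1)$ and $W^{1,p}(0,T;E_0)\to W^{1,p}(\R;E_0)$, reduces the statement on the whole line to a Fourier-multiplier estimate (exploiting $E_1\embeds E_0$), and restricts back to $(0,T)$. One should only flag the borderline case: if $\theta s+(1-\theta)r$ happens to be an integer, then $W^{\theta s+(1-\theta)r,p}(\Omega)$ is to be read as the Besov space $B^{\theta s+(1-\theta)r}_{p,p}(\Omega)$, consistently with its definition via real interpolation, whereas for non-integral values it coincides with the usual Slobodeckij space; likewise $W^{\theta,p}(0,T;\cdot)$ for $\theta\in(0,1)$ is the Slobodeckij space, and the endpoints $\theta\in\{0,1\}$ are trivially contained in $Y$ already.
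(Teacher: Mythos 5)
Your opening step is correct but carries almost no weight: $A_0\cap A_1\hookrightarrow (A_0,A_1)_{\theta,p}$ holds for every interpolation couple, so everything hinges on your identification
\begin{equation*}
	\bigl(L^p(0,T;E_1),\,W^{1,p}(0,T;E_0)\bigr)_{\theta,p}
	=
	W^{\theta,p}\bigl(0,T;(E_1,E_0)_{\theta,p}\bigr),
\end{equation*}
and this is a genuine gap. The identity (even just the embedding \eqq{$\subseteq$} you need) is not an off-the-shelf fact in the sources you invoke: Lions--Magenes prove such mixed space--time statements in the Hilbert case $p=2$, where Plancherel diagonalizes both norms; Triebel's result is $(L^p(A_0),L^p(A_1))_{\theta,p}=L^p((A_0,A_1)_{\theta,p})$ with no time derivative involved; and \cite{DenkHieberPruess:2007:1} contains trace theorems, not this identity. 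It also cannot be obtained functorially, since the identity map is bounded $L^p(0,T;E_1)\to L^p(0,T;F)$ but \emph{not} $W^{1,p}(0,T;E_0)\to W^{1,p}(0,T;F)$ for $F=(E_1,E_0)_{\theta,p}$ (that embedding goes the wrong way), and the naive $K$-functional argument stalls: in a decomposition $u=a+b$ with $b\in W^{1,p}(0,T;E_0)$ you control time differences of $b$ only in the $E_0$-norm, which does not dominate the $F$-norm. Note moreover that your claimed identity is \emph{strictly stronger} than the lemma, because the interpolation space of the couple is in general strictly larger than the intersection; you have replaced the assertion by a harder one and declared it classical. Finally, the sketch you offer (reflection to $\R$ plus an operator-valued Fourier-multiplier estimate) would, under UMD/R-boundedness hypotheses, naturally produce the complex-interpolation analogue with values in $[E_1,E_0]_\theta$, i.e.\ a Bessel-potential space in time, which is not the real-interpolation statement with values in $(E_1,E_0)_{\theta,p}$ that you need in order to land in the Slobodeckij scale.

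For comparison, the paper does not prove the lemma at all; it cites Lemma~4.3 of \cite{DenkSaalSeiler:2008:1}, and a correct self-contained argument must exploit precisely the information you discard by passing to the couple: for $u$ in the intersection one can estimate $u(\cdot+h)-u(\cdot)$ \emph{simultaneously} in $E_1$ (trivially) and in $E_0$ (through $\partial_t u$), and then use $\norm{v}_{(E_1,E_0)_{\theta,p}}\le C\,\norm{v}_{E_1}^{1-\theta}\norm{v}_{E_0}^{\theta}$. Even this requires care: the crude product bound only yields $\sup_h h^{-\theta}\norm{u(\cdot+h)-u}_{L^p(F)}<\infty$, i.e.\ the second index $\infty$, and the stated $W^{\theta,p}$-regularity needs the finer argument carried out in \cite{DenkSaalSeiler:2008:1}. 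Your reiteration step for $(W^{r,p},W^{s,p})_{\theta,p}$ and the Besov-versus-Sobolev caveat at integer smoothness are fine, but they do not repair the central unproven identification.
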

\begin{proof}
	See Lemma~4.3 in \cite{DenkSaalSeiler:2008:1}.
\end{proof}

Similarly as above we define the Sobolev spaces
\begin{equation*}
	W^{r,s}_p(\Sigma_T) 
	\coloneqq
	L^p(0,T;W^{r,p}(\partial\Omega)) \cap W^{s,p}(0,T;L^p(\partial\Omega))
\end{equation*}
on the lateral boundary $\Sigma_T = (0,T) \times \partial\Omega$ of the space-time cylinder~$Q_T$.

The following trace theorem is proved in \cite[Lem.~3.5]{DenkHieberPruess:2007:1}; see also Sect.~2 in \cite{DenkPruessZacher:2008:1}:
\begin{lemma}
	\label{lem:trace_theorem}
	For $p>1$, the trace operators 
	\begin{align*}
		\gamma_0 
		&
		\colon W^{2,1}_p(Q_T) \to W^{2-1/p,1-1/(2p)}_p(\Sigma_T)
		,
		\\
		\gamma_1 
		&
		\colon W^{2,1}_p(Q_T) \to W^{1-1/p,1/2-1/(2p)}_p(\Sigma_T)
	\end{align*}
	defined by $\gamma_0 \rho = \restr{\rho}{\Sigma_T}$ and $\gamma_1 \rho = \nabla \rho \cdot n_{\Sigma_T}$
	are bounded and have a continuous right inverse. 
\end{lemma}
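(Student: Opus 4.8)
The plan is to reduce, by a standard localization and boundary‑flattening argument, to a model problem on a half‑space and then to exploit the Fourier‑analytic description of the anisotropic (parabolic) Sobolev spaces involved; this is precisely the route of \cite[Lem.~3.5]{DenkHieberPruess:2007:1}, whose statement we merely quote, so the following is the proof one would reconstruct. First I would cover $\partial\Omega$ by finitely many charts in which, using the $C^4$‑regularity of $\partial\Omega$ from \cref{assumption:A1} (here $C^2$ would already suffice), the boundary is flattened to a piece of $\{x_d = 0\}$; after multiplying by a subordinate partition of unity and transporting the functions, one is left to prove both claims for $Q = (0,T)\times\R^d_+$ with $\Sigma = (0,T)\times\R^{d-1}$, with constants uniform over the charts. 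The coordinate change is a $C^2$‑diffeomorphism acting only in the spatial variables, hence it preserves $W^{2,1}_p$ up to equivalent norms and leaves the time structure untouched.

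Second, on the model domain I would extend a function $\rho \in W^{2,1}_p((0,T)\times\R^d_+)$ to $W^{2,1}_p(\R\times\R^d_+)$ by a Seeley‑type extension in time, glued by a cutoff near $t=T$ (near $t=0$ no extension is needed for the trace estimate; it will matter only for the right inverse), and then to $W^{2,1}_p(\R\times\R^d)$ by a standard higher‑order reflection in $x_d$. On the full space $\R\times\R^d$ the Mikhlin–Hörmander multiplier theorem applied to the parabolic symbol $1+|\xi|^2+i\tau$ shows that $\|\cdot\|_{W^{2,1}_p}$ is equivalent to the parabolic Bessel‑potential norm $\|(1+|\xi|^2+i\tau)\,\widehat{\cdot}\,\|$. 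The trace onto $\{x_d=0\}$ of such a space removes half a derivative in $x_d$ (and in $L^p$ one further loses $1/p$), which, counted with the parabolic weight in which time equals two spatial derivatives, yields exactly the anisotropic Besov space $W^{2-1/p,\,1-1/(2p)}_p(\Sigma)$ for $\gamma_0$. Applying the same trace result to $\partial_{x_d}\rho$, which belongs to the $W^{1,1/2}_p$‑type scale, produces for $\gamma_1$ the loss $1-1/p$ in space and $1/2-1/(2p)$ in time, i.e.\ $W^{1-1/p,\,1/2-1/(2p)}_p(\Sigma)$.

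Third, for the right inverse I would construct an explicit extension operator on the model domain. Given boundary data $g$ in the relevant trace space, let $\rho$ solve the half‑space heat equation $\partial_t\rho - \laplace\rho = 0$ in $\R^d_+$ with $\rho|_{x_d=0}=g$ (for $\gamma_0$) or with $\nabla\rho\cdot n = g$ on $\{x_d=0\}$ (for $\gamma_1$) and zero initial data; maximal parabolic regularity of the half‑space problem, together with the fact that the data lies in the \emph{sharp} trace space, gives $\rho\in W^{2,1}_p$ with controlled norm, and $\gamma_0\rho = g$, respectively $\gamma_1\rho=g$, by construction. Transporting back through the charts and recombining with the partition of unity yields the global right inverse, with a lower‑order correction handling the chart overlaps and the interior cutoff.

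The main obstacle is the careful bookkeeping of the anisotropic fractional orders and the verification of the underlying Fourier‑multiplier and real‑interpolation identities — namely that the lateral trace of the parabolic Bessel‑potential space on $\Sigma_T$ is exactly the stated anisotropic Besov space and admits a bounded extension, uniformly in the localization. This is the technical heart of \cite[Lem.~3.5]{DenkHieberPruess:2007:1} (see also Sect.~2 of \cite{DenkPruessZacher:2008:1}), and reproducing it would essentially duplicate the $\mathcal R$‑boundedness and interpolation machinery developed there; we therefore invoke it directly.
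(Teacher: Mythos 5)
Your proposal is correct and ends up in the same place as the paper, which offers no proof of its own but simply quotes \cite[Lem.~3.5]{DenkHieberPruess:2007:1} (see also Sect.~2 of \cite{DenkPruessZacher:2008:1}); your localization/flattening and Fourier-multiplier sketch is a faithful outline of how that cited result is established. One minor caveat only: for the right inverse, solving the half-space heat equation on $(0,T)$ with zero initial data requires a compatibility condition on $g$ at $t=0$, so one would rather extend $g$ in time and use an explicit extension (or solve on the whole time line), but since you invoke the cited lemma for the technical core anyway this does not affect the argument.
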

Recall the differential equation \eqref{eq:ODE}, where an extension to $\R^2$ of the density function~$\rho$ is used. 
For theoretical purposes we will use an extension operator fulfilling the following result from Lemma~6.37 in \cite{GilbargTrudinger:2001:1}:
\begin{lemma}\label{lem:extension} 
	Let $\alpha \in (0,1)$ be a fixed number. 
	There exists a linear, continuous extension operator 
	\begin{equation*}
		\E \colon C^{1,\alpha}(\closure{\Omega}) \to C^{1,\alpha}(\R^2)
	\end{equation*}
	such that
	\begin{equation*}
		\norm{\E f}_{C^{1,\alpha}(\R^2)} 
		\le 
		C_\E \, \norm{f}_{C^{1,\alpha}(\Omega)}
		\quad\text{and}\quad
		\norm{\E f}_{L^\infty(\R^2)} 
		\le 
		C_{\E,\infty}\norm{f}_{L^\infty(\Omega)},
	\end{equation*}
	holds for all $f \in C^{1,\alpha}(\closure{\Omega})$.
	For brevity we will write $\overline f \coloneqq\E f$.
\end{lemma}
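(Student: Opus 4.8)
The plan is to construct $\E$ by the classical localization-and-reflection argument, which simultaneously yields the $C^{1,\alpha}$ estimate and the $L^\infty$ estimate with a constant independent of the Hölder norm. Since $\partial\Omega$ is $C^4$ by \ref{assumption:A1} (hence in particular $C^{1,\alpha}$) and compact, I would first cover $\closure{\Omega}$ by finitely many open sets $U_0, U_1, \dots, U_N$, where $\closure{U_0} \subset \Omega$ and each $U_j$ with $j \ge 1$ is a neighbourhood of a boundary point carrying a $C^4$-diffeomorphism $\Psi_j \colon U_j \to B_1(0)$ that straightens the boundary, i.e.\ $\Psi_j(U_j \cap \Omega) = B_1(0) \cap \{y_2 > 0\}$ and $\Psi_j(U_j \cap \partial\Omega) = B_1(0) \cap \{y_2 = 0\}$. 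Fixing a subordinate partition of unity $\{\zeta_j\}_{j=0}^N$, I write $f = \sum_{j=0}^N \zeta_j f$ and extend each summand separately; the interior summand $\zeta_0 f$ is simply extended by zero.

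For a boundary summand I would transport $g_j \coloneqq (\zeta_j f) \circ \Psi_j^{-1}$ --- a $C^{1,\alpha}$ function on $\closure{B_1(0)} \cap \{y_2 \ge 0\}$ vanishing near $\partial B_1(0)$ --- to the lower half-plane by the second-order reflection
\[
	\widetilde g_j(y_1, y_2) \coloneqq -3\, g_j(y_1, -y_2) + 4\, g_j\!\left(y_1, -\tfrac{y_2}{2}\right) \quad\text{for } y_2 < 0, \qquad \widetilde g_j \coloneqq g_j \ \text{for } y_2 \ge 0.
\]
The coefficients $-3, 4$ are the unique solution of $a + b = 1$ and $-a - \tfrac12 b = 1$, which is precisely what forces $\widetilde g_j$ and $\nabla \widetilde g_j$ to agree from both sides across $\{y_2 = 0\}$; a direct computation then gives $\widetilde g_j \in C^{1,\alpha}(B_1(0))$ with $\norm{\widetilde g_j}_{C^{1,\alpha}} \le C\, \norm{g_j}_{C^{1,\alpha}}$ and, importantly, $\norm{\widetilde g_j}_{L^\infty} \le 7\, \norm{g_j}_{L^\infty}$. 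Pulling back by $\Psi_j$ and multiplying by a cutoff $\chi_j \in C_c^\infty(U_j)$ that equals $1$ on $\supp \zeta_j$ produces $E_j(\zeta_j f) \in C_c^{1,\alpha}(\R^2)$; the chain rule together with uniform $C^4$-bounds on the finitely many $\Psi_j, \Psi_j^{-1}$ gives $\norm{E_j(\zeta_j f)}_{C^{1,\alpha}(\R^2)} \le C \norm{f}_{C^{1,\alpha}(\Omega)}$ and $\norm{E_j(\zeta_j f)}_{L^\infty(\R^2)} \le C \norm{f}_{L^\infty(\Omega)}$. Then $\E f \coloneqq \zeta_0 f + \sum_{j=1}^N E_j(\zeta_j f)$, extended by zero outside a bounded neighbourhood of $\closure{\Omega}$, is linear and, after summing over the finitely many indices, satisfies both asserted estimates.

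The matching identities for $\widetilde g_j$ across $\{y_2 = 0\}$, the Hölder estimate for $\nabla \widetilde g_j$ near the interface, and the change-of-variables bounds are all routine. The one point deserving attention is the $L^\infty$ estimate: it holds with a constant $C_{\E,\infty}$ independent of $\norm{f}_{C^{1,\alpha}}$ precisely because the reflection formula uses only point values of $f$ (no derivatives) and neither the diffeomorphisms nor the cutoffs amplify sup-norms. Alternatively, the assertion is a special case of the standard extension theorem for domains of class $C^{k,\alpha}$, so one may simply invoke Lemma~6.37 in \cite{GilbargTrudinger:2001:1}, whose proof is the construction just outlined.
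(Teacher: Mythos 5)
Your construction is correct and is essentially the paper's own approach: the paper offers no proof of its own but simply invokes Lemma~6.37 of \cite{GilbargTrudinger:2001:1}, whose proof is exactly the localization-plus-reflection argument you carry out (your coefficients $-3,4$ are the correct ones for $C^1$ matching across $\{y_2=0\}$). The one point where you add something beyond the citation is valuable: the second estimate $\norm{\E f}_{L^\infty(\R^2)} \le C_{\E,\infty}\norm{f}_{L^\infty(\Omega)}$ is not literally contained in the statement of the cited lemma, and your observation that the reflection formula uses only point values of $f$ (giving the factor $7$) while the boundary charts and cutoffs do not amplify sup-norms is precisely what justifies it.
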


\section{Analysis of the Forward System}
\label{sec:forward_system}

This section is devoted to showing the existence of strong solutions to the forward system \eqref{eq:forward_system} with boundary and initial conditions \eqref{eq:hreg_initial}, \eqref{eq:forward_bc}, \eqref{eq:ODE_init}. 
We proceed in two steps. 
First we provide auxiliary results on equation \eqref{eq:forward_system2} as well as on linear parabolic equations.
Then we prove existence of solutions to the complete forward system.

\subsection{Preliminary Results}

First, we study the regularized Eikonal equation \eqref{eq:forward_system2}.
\begin{lemma}\label{lem:ex_phi} 
	For given $\widetilde \rho \colon Q_T \to \R$ consider the equation 
	\begin{equation*}
		-\delta_1\,\laplace \phi + \abs{\nabla\phi}^2 
		= 
		\frac{1}{f(\widetilde \rho)^2+\delta_2}
		\quad
		\text{in } Q_T
		,
	\end{equation*}
	with boundary conditions \eqref{eq:forward_bc}. 
	We have:
	\begin{enumerate}
		\item 
			If $\widetilde \rho \in C([0,T];L^2(\Omega))\cap H^1(0,T;H^1(\Omega)^*)$, then there exists a unique strong solution which satisfies $\phi \in L^\infty(0,T;W^{2,p}_\text{ND}(\Omega))\cap H^1(0,T;H^1(\Omega))$ for all $2 \le p < \infty$.
			Moreover, the a~priori estimates
			\begin{equation}\label{eq:phi_apriori}
				\norm{\phi}_{L^\infty(0,T;W^{2,p}(\Omega))}
				\le 
				\widetilde C_\phi
				\quad 
				\text{and}
				\quad
				\norm{\phi}_{H^1(0,T;H^1_\textup{D}(\Omega))}
				\le 
				\widetilde C_\phi\,\norm{\widetilde \rho}_{H^1(0,T;H^1(\Omega)^*)}
			\end{equation}
			hold with a positive constant $\widetilde C_\phi$ depending on $p,\delta_1,\,\delta_2,\,\Omega$ and $T$ only.
		\item 
			If $\widetilde \rho \in W^{2,1}_p(Q_T)$ holds, the strong solution $\phi$ additionally belongs to $W^{4,1}_p(Q_T)$ and satisfies the a~priori estimate
			\begin{align}\label{eq:phi_apriori2}
				\norm{\phi}_{W^{1,p}(0,T;W^{2,p}(\Omega))} 
				\le 
				\overline C_\phi\,\norm{\widetilde \rho}_{W^{2,1}_p(Q_T)}
				,
			\end{align}
			with a constant $\overline C_\phi$ depending on $p,\, \delta_1,\,\delta_2,\,\Omega$ and $T$ only.

		\item 
			For any $\widetilde \rho_1, \widetilde \rho_2 \in C([0,T];L^2(\Omega))$, the corresponding solutions $\phi_1$ and $\phi_2$ satisfy the Lipschitz estimate
			\begin{align}\label{eq:phi_stability}
				\norm{\phi_1(\cdot,t)  -\phi_2(\cdot,t)}_{W^{2,2}(\Omega)} 
				\le 
				\widehat{C}_\phi \, \norm{\widetilde \rho_1(\cdot, t) - \widetilde \rho_2(\cdot,t)}_{L^2(\Omega)}
			\end{align}
			for all $t\in [0,T]$, with $\widehat{C}_\phi$ depending on $p,\, \delta_1,\, \delta_2,\,\Omega$ and $T$ only.
	\end{enumerate} 
\end{lemma}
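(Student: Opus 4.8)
The equation is an elliptic PDE in the spatial variable $x$ for (almost) every fixed $t \in [0,T]$, with $t$ entering only as a parameter through the right-hand side $\frac{1}{f(\widetilde\rho(t,\cdot))^2+\delta_2}$. The plan is therefore to first establish existence, uniqueness, regularity and a priori estimates for the stationary problem
\begin{equation*}
	-\delta_1\,\laplace \phi + \abs{\nabla\phi}^2 = g \quad\text{in }\Omega, \qquad \phi = 0 \text{ on }\partial\Omega_\textup{D}, \quad \nabla\phi\cdot n = 0 \text{ on }\partial\Omega_\textup{W},
\end{equation*}
uniformly in the data $g$ ranging over the relevant bounded set, and then recover the time-dependent statements by tracking the $t$-dependence through these uniform bounds. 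The key structural observation is the Hopf–Cole transformation $\psi \coloneqq \exp(-\phi/\delta_1)$: a short computation shows $-\delta_1\,\laplace\psi = -\frac{1}{\delta_1}\,g\,\psi$, turning the nonlinear problem into a \emph{linear} elliptic eigenvalue-type equation $-\delta_1^2\,\laplace\psi + g\,\psi = 0$ with boundary data $\psi = 1$ on $\partial\Omega_\textup{D}$ and $\nabla\psi\cdot n = 0$ on $\partial\Omega_\textup{W}$. Since $g = \frac{1}{f(\widetilde\rho)^2+\delta_2} \ge 0$ and is bounded above by $1/\delta_2$, this linear problem is coercive, uniquely solvable, and a maximum principle gives $0 < c \le \psi \le 1$ with $c$ depending only on $\delta_1,\delta_2,\Omega$; hence $\phi = -\delta_1\log\psi$ is well-defined and bounded, and this is the mechanism behind both existence and uniqueness.

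\textbf{Part (i).} With $\psi$ in hand, elliptic regularity for the linear equation $-\delta_1^2\,\laplace\psi = -g\psi$ (using \cref{assumption:A1}, \cref{assumption:A2}, and the non-meeting of $\partial\Omega_\textup{D}$, $\partial\Omega_\textup{W}$, so that the mixed boundary value problem enjoys full $W^{2,p}$ regularity, cf.\ \cite{Grisvard:2011:1}) yields $\psi \in W^{2,p}(\Omega)$ for all $p < \infty$ with a bound depending only on $\norm{g\psi}_{L^p} \le \norm{g}_{L^\infty}\norm{\psi}_{L^\infty}$, hence only on $\delta_1,\delta_2,\Omega,p$. Since $\psi$ is bounded away from zero, $\phi = -\delta_1\log\psi$ inherits $W^{2,p}$ regularity with a bound of the same type, giving the first inequality in \eqref{eq:phi_apriori} with a constant $\widetilde C_\phi$ independent of $\widetilde\rho$; the boundary conditions on $\phi$ follow from those on $\psi$. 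For the temporal regularity, I differentiate the linear equation for $\psi$ formally in $t$: writing $g_t = \partial_t\big(\frac{1}{f(\widetilde\rho)^2+\delta_2}\big) = -\frac{2 f(\widetilde\rho)f'(\widetilde\rho)}{(f(\widetilde\rho)^2+\delta_2)^2}\,\partial_t\widetilde\rho$, and using $f \in W^{3,\infty}$ (\cref{assumption:A4}) so that the prefactor is bounded, one sees $g \in H^1(0,T;H^1(\Omega)^*)$ with norm controlled by $\norm{\widetilde\rho}_{H^1(0,T;H^1(\Omega)^*)}$. Then $\partial_t\psi$ solves a linear elliptic problem with right-hand side in $H^1(\Omega)^*$ (after pairing against the $W^{2,p}$-bounded factor $\psi$), so $\partial_t\psi \in H^1(\Omega)$ with the corresponding estimate; transferring back to $\phi = -\delta_1\log\psi$ via the quotient/chain rule and the uniform lower bound on $\psi$ gives the second estimate in \eqref{eq:phi_apriori}. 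A density/difference-quotient argument justifies the formal differentiation.

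\textbf{Parts (ii) and (iii).} For (ii), when $\widetilde\rho \in W^{2,1}_p(Q_T)$, the embedding \eqref{eq:w12q_embedding} gives $\widetilde\rho \in C(\closure{Q_T})$ and moreover $\widetilde\rho \in W^{1,p}(0,T;W^{0,p}(\Omega)) \cap L^p(0,T;W^{2,p}(\Omega))$; composing with $f \in W^{3,\infty}$ keeps $g$ in a space of this type, and bootstrapping the linear equation for $\psi$ in both $x$ (elliptic regularity, now up to $W^{4,p}$ since $g\psi \in W^{2,p}$ in space) and $t$ (the equation is linear in $\psi$, so $\partial_t\psi$ solves an elliptic problem with $W^{2,p}$-in-space data) yields $\psi \in W^{1,p}(0,T;W^{2,p}(\Omega))$ and then $\psi \in W^{4,1}_p(Q_T)$; the chain rule and the uniform two-sided bounds on $\psi$ transfer these to $\phi$, producing \eqref{eq:phi_apriori2}. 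For (iii), the Lipschitz estimate is most cleanly obtained either directly on the $\phi$-equation or via $\psi$: subtracting the equations for $\psi_1,\psi_2$ at fixed $t$ gives $-\delta_1^2\,\laplace(\psi_1-\psi_2) + g_1(\psi_1-\psi_2) = -(g_1-g_2)\psi_2$ with homogeneous mixed boundary data, so $W^{2,2}$ elliptic regularity plus $\norm{g_1-g_2}_{L^2} \lesssim \norm{\widetilde\rho_1-\widetilde\rho_2}_{L^2}$ (Lipschitz continuity of $\rho \mapsto \frac{1}{f(\rho)^2+\delta_2}$, which holds since $f$ and $f'$ are bounded and the denominator is bounded below by $\delta_2$) yields $\norm{\psi_1-\psi_2}_{W^{2,2}} \lesssim \norm{\widetilde\rho_1-\widetilde\rho_2}_{L^2}$; combined with the uniform bounds $0 < c \le \psi_i \le 1$ and $\norm{\psi_i}_{W^{2,p}} \le C$, the identity $\phi_1-\phi_2 = -\delta_1(\log\psi_1 - \log\psi_2)$ and the product/chain rule give \eqref{eq:phi_stability}. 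The main obstacle throughout is bookkeeping the mixed (Dirichlet/Neumann) boundary conditions so that one genuinely has $W^{2,p}$ (and higher) elliptic regularity with constants depending only on the listed quantities — this is exactly what \cref{assumption:A1}–\cref{assumption:A2} and the cited results in \cite{Grisvard:2011:1} are there to guarantee — together with checking that the Hopf–Cole substitution is compatible with these boundary conditions (it is: $\phi=0 \Leftrightarrow \psi=1$, and $\nabla\phi\cdot n = 0 \Leftrightarrow \nabla\psi\cdot n = 0$).
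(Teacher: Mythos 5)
Your proposal is correct and follows essentially the same route as the paper: the exponential (Hopf--Cole) substitution reducing \eqref{eq:forward_system2} to a linear elliptic mixed boundary value problem at each fixed $t$, elliptic $W^{2,p}$ regularity under \cref{assumption:A1}--\cref{assumption:A2}, a uniform bound keeping $\psi$ away from the critical value so the transformation can be inverted, formal time differentiation of the linear equation for the $H^1(0,T)$ and $W^{1,p}(0,T)$ estimates, and subtraction of the two $\psi$-equations plus Lipschitz continuity of $\rho \mapsto (f(\rho)^2+\delta_2)^{-1}$ for part (iii). The only differences are cosmetic: you work with $\psi = e^{-\phi/\delta_1}$ (inhomogeneous Dirichlet data $\psi=1$) rather than the paper's shifted $\psi = e^{-\phi/\delta_1}-1$, and your asserted uniform lower bound $0<c\le\psi$ (obtainable by comparison with the solution for the constant coefficient $1/\delta_2$, or by the Harnack-type result the paper cites) plays the role of the paper's Step~2.
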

From now on we shall use the definition $C_\phi \coloneqq \max\paren[big]\{\}{\widetilde{C_\phi}, \overline C_\phi, \widehat C_\phi}$.
\begin{proof}
	We first show assertion (i). Note that due to the continuity of $\widetilde \rho$ in time it makes sense to define, for fixed $t \in [0,T]$, the function
	\begin{equation}\label{eq:def_qt}
		q_t(x) 
		\coloneqq
		\frac{1}{\delta_1^2} \frac{1}{f(\widetilde \rho(t,x))^2 + \delta_2}
		.
	\end{equation}
	By assumption \cref{assumption:A4} on $f$ we then have $q_t \in L^\infty(\Omega)$.

	\underline{Step 1: Existence.}  
	First note that an application of the transformation 
	\begin{align}\label{eq:psitrans}
		\psi(t,\cdot) = \e^{-\phi(t,\cdot)/\delta_1} - 1
	\end{align}
	to \eqref{eq:forward_system2} yields for all $t\in [0,T]$ the linear problem
	\begin{equation}\label{eq:psi}
		\begin{aligned}
			-\laplace \psi(t,\cdot) + q_t\,\psi(t,\cdot)
			&
			= 
			-q_t(\cdot)
			&
			&
			\text{in } \Omega
			,
			\\
			\psi(t,\cdot)
			&
			= 
			0 
			&
			&
			\text{on }\partial\Omega_\textup{D}
			,
			\\
			\partial_n  \psi(t,\cdot)
			&
			= 
			0 
			&
			&
			\text{on } \partial\Omega_\textup{W}
			.
		\end{aligned}
	\end{equation}
	As $q_t \in  L^\infty(\Omega)$ and $q_t > 0$ \ale in $\Omega$, the Lax-Milgram lemma yields for every $t\in [0,T]$ the existence of a unique weak solution $\psi(t,\cdot) \in H^1_\textup{D}(\Omega)$ and the estimate 
	\begin{equation*}
		\norm{\psi(t,\cdot)}_{H^1(\Omega)} 
		\le 
		C \, \norm{q_t}_{L^2(\Omega)}
		.
	\end{equation*}
	Elliptic regularity theory (see for instance Theorem~3.17 in \cite{Troianiello:1987:1}) then implies $\psi(t,\cdot) \in W^{2,q}(\Omega)$ for any $2 \le q < \infty$, and there holds
	\begin{equation}\label{eq:psi_apriori}
		\norm{\psi(t,\cdot)}_{W^{2,q}(\Omega)} 
		\le 
		C \, \norm{q_t}_{L^q(\Omega)} 
		\eqqcolon 
		C_\phi		
	\end{equation}
	for $t\in [0,T]$.
	For given $x \in \Omega$, the function $t \mapsto q_t(x)$ belongs to $L^\infty(0,T)$ again by \cref{assumption:A4}.
	Taking the supremum over $0 \le t \le T$ yields the regularity $\psi \in L^\infty(0,T;W^{2,q}(\Omega))$. 
	Formally differentiating equation \eqref{eq:psi} with respect to time we see that $\partial_t \psi$ satisfies 
	\begin{equation*}
		-\laplace \partial_t\psi(t,\cdot) + q_t\partial_t\psi(t,\cdot) 
		= 
		-(1+\psi(t,\cdot))(\partial_t q_t) 
		\quad
		\text{in } \Omega
		.
	\end{equation*}
	Using its definition and the fact that $\widetilde \rho \in H^1(0,T;H^1(\Omega)^*)$, the derivative of $q_t(x)$ with respect to time is an element of $L^2(0,T;H^1(\Omega)^*)$.
	Therefore, Sect.~2.2.2, Corollary, p.99 in \cite{Troianiello:1987:1} (after replacing $\partial_t q_t(x)$ by a continuous in time approximation and passing to the limit) yields the existence of $\partial_t \psi \in L^2(0,T;H^1(\Omega))$ with
	\begin{align}\label{eq:psi_apriori_time}
		\norm{\partial_t \psi }_{L^2(0,T;H^1(\Omega))} 
		\le 
		C \, \norm{\partial_t \widetilde \rho}_{L^2(0,T;H^1(\Omega)^*)}
		.
	\end{align}	

	\underline{Step 2: Strict lower bound.}
	In order to invert \eqref{eq:psitrans} and obtain a solution to \eqref{eq:forward_system2}, we need to ensure that $\psi > -1$ holds. 
	This follows from Theorem~4 in \cite{LeSmith:2002:1}, provided that we can show $\psi \ge -1$, $\norm{\psi}_{L^\infty(Q_T)} < \infty$ and Hölder continuity of $\psi(t,\cdot)$ for \aa $t\in(0,T)$.
	The last two assertions are a direct consequence of the embeddings $W^{2,p}(\Omega) \embeds L^\infty(\Omega)$ and $W^{2,p}(\Omega) \embeds C^{0,1/2}(\Omega)$ as $p \ge 2$ and we are in dimension~2, combined with \eqref{eq:psi_apriori}. 
	To show the first part we choose $\phi = (\psi+1)_-$, \ie, the negative part of $\psi+1$, as test function in the weak formulation of \eqref{eq:psi} and obtain
	\begin{equation*}
		\int_\Omega \abs{\nabla  (\psi(t,\cdot)+1)_-}^2 \dx + \int_\Omega q_t(x)(\psi + 1)_-^2 \dx 
		= 
		0
		.
	\end{equation*}
	Since $q_t$ is strictly positive, this implies that $(\psi + 1)_- = 0$ holds \ale in $\Omega$. 
	Thus there exists a positive constant $C_\psi$ \st
	\begin{equation*}
		\psi  \ge C_\psi 
		> 
		- 1
		\text{ \ale in } \Omega
		.
	\end{equation*}
	We can therefore invert the transformation \eqref{eq:psitrans} and conclude 
	that the solution of \eqref{eq:forward_system2}, \eqref{eq:forward_bc} 
	fulfills the desired regularity, as the regularity is unaffected by the 
	transformation (using $\psi \in L^\infty(0,T;L^\infty(\Omega))$ and $\psi > 
	-1$). This ends the proof of assertion (i). To conclude (ii) we only have 
	to show the additional regularity $\partial_t \psi \in L^p(0,T;W^{2,p}(\Omega))$ which
	follows under the assumption $\widetilde \rho \in W^{2,1}_p(Q_T)$ and standard elliptic
	regularity theory, see \cite[Theorem 3.17]{Troianiello:1987:1}.

Similarly, in case $\widetilde \rho \in W^{2,1}_p(Q_T)$ we obtain $\partial_t \psi \in L^p(0,T;W^{2,p}(\Omega))$, which gives the desired estimate.

	\underline{Step 3: Lipschitz estimate.} 
	To show (iii), denote by $q_{t,1}$, $q_{t,2}$ the respective coefficients for $\widetilde \rho_1$ and $\widetilde \rho_2$ as in \eqref{eq:def_qt} and, analogously, let $\psi_1$ and $\psi_2$ be the respective solutions to \eqref{eq:psi}. 
	Then $\overline \psi = \psi_1 - \psi_2$ satisfies
	\begin{equation}\label{eq:psi_diff}
		\begin{aligned}
			-\laplace \overline \psi(t,\cdot) + q_{t,1}(x) \, \overline \psi(t,\cdot) 
			&
			=
			(1 -\psi_2(t,\cdot)) \, \overline q_t(x) 
			&
			&
			\text{in } \Omega
			,
			\\
			\overline \psi(t,\cdot) 
			&
			= 
			0 
			&
			&
			\text{on }\partial\Omega_\textup{D}
			,
			\\
			\partial_n \overline \psi(t,\cdot) 
			&
			= 
			0 
			&
			&
			\text{on } \partial\Omega_\textup{W}
			,
		\end{aligned}
	\end{equation}
	for $t\in [0,T]$ with $\overline q_t = q_{t,1} - q_{t,2}$. 
	Noting that $q_t$ is Lipschitz continuous as a function of $\widetilde \rho$ (due to \cref{assumption:A4}) and applying the a~priori estimate \eqref{eq:psi_apriori} to \eqref{eq:psi_diff} yields
	\begin{align*}
		\norm{\overline \psi(t,\cdot)}_{W^{2,2}(\Omega)} 
		\le 
		C \, \norm{\overline q_t}_{L^2(\Omega)} 
		\le  
		C \, \norm{\widetilde \rho_1(t,\cdot) - \widetilde \rho_2(t,\cdot)}_{L^2(\Omega)}
		,
	\end{align*}
	where we used the boundedness of $(1-\psi_2(t,\cdot))$ in $L^\infty(\Omega)$ and, again, $W^{2,p}(\Omega) \embeds L^\infty(\Omega)$ and \eqref{eq:psi_apriori}. This implies (iii) and completes the proof.
\end{proof}

To obtain the desired $W^{2,1}_p(Q_T)$-regularity of the density function~$\rho$ we will need the following lemma taken from \cite{DenkHieberPruess:2007:1} but adopted to our notation.
\begin{lemma}\label{lem:W2p_regularity} 
	Let assumptions \cref{assumption:A1}--\cref{assumption:A4} hold, $p\in [2,\infty)$ and $\eps > 0$. 
	Suppose that $c \in L^p(Q_T)$, $b \in L^p(0,T;L^\infty(\Omega))$, $r \in W^{1-1/p,1/2-1/(2p)}_p(\Sigma_T)$ and $\rho_0\in W^{2-2/p,p}(\Omega)$ are given.
	Then the problem
	\begin{equation*}
		\begin{aligned}
			\partial_t \rho - \eps \laplace \rho + b\cdot \nabla \rho 
			&
			= c 
			&
			&
			\text{in } Q_T
			,
			\\
			\eps \nabla \rho \cdot n 
			&
			= r 
			&
			&
			\text{on }\Sigma_{\textup{W},T}
			,
			\\
			\rho 
			&
			= 
			0 
			&
			&
			\text{on }\Sigma_{\textup{D},T}
			,
			\\
			\rho(0) 
			&
			=
			\rho_0  
			&
			&
			\text{in } \Omega
		\end{aligned}
	\end{equation*}
	admits a unique strong solution $\rho \in W^{2,1}_p(Q_T)$ depending continuously on the input data $c$, $r$ and~$\rho_0$.
\end{lemma}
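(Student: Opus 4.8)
The plan is to treat the first-order term $b\cdot\nabla\rho$ as a lower-order perturbation of the heat operator $\partial_t-\eps\laplace$ carrying the mixed Dirichlet--Neumann boundary conditions, for which maximal $L^p$-regularity with inhomogeneous data is classical, and then to absorb the perturbation by a contraction argument on a sufficiently fine partition of the time interval. Of the standing hypotheses only \cref{assumption:A1} and \cref{assumption:A2} are actually used here. First I would record the maximal-regularity statement for the principal part: by \cref{assumption:A1} the boundary is of class $C^4$, and by \cref{assumption:A2} the sets $\partial\Omega_\textup{D}$ and $\partial\Omega_\textup{W}$ have disjoint closures, so no corner compatibility conditions occur and the Lopatinskii--Shapiro condition for the pair (Dirichlet on $\Sigma_{\textup{D},T}$, conormal derivative on $\Sigma_{\textup{W},T}$) holds. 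Hence the theory of \cite{DenkHieberPruess:2007:1} (see also \cite{DenkPruessZacher:2008:1} for the mixed-boundary case) provides a bounded linear solution operator $\mathcal S\colon(\widehat c,r,\rho_0)\mapsto u$ from $L^p(Q_T)\times W^{1-1/p,1/2-1/(2p)}_p(\Sigma_T)\times W^{2-2/p,p}(\Omega)$ into $W^{2,1}_p(Q_T)$ for the problem with $b\equiv 0$ — the data being subject to the natural compatibility conditions, which are understood here and are vacuous for $p$ close to $2$ — with the trace space for $r$ being exactly the one of \cref{lem:trace_theorem}, and with the a~priori bound
\[
	\norm{u}_{W^{2,1}_p(Q_T)}
	\le
	C\,\paren[big](){\norm{\widehat c}_{L^p(Q_T)}+\norm{r}_{W^{1-1/p,1/2-1/(2p)}_p(\Sigma_T)}+\norm{\rho_0}_{W^{2-2/p,p}(\Omega)}}
\]
holding with a constant that may be chosen uniformly for subintervals $(a,b)\subset(0,T)$ in place of $(0,T)$.

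Next I would set up the perturbation. Since $p\ge 2$ we have $2-2/p\ge 1$, hence $W^{2,1}_p(Q_T)\embeds C([0,T];W^{1,p}(\Omega))$ by \eqref{eq:w12q_embedding} and \cref{lem:interpolation}, so that for any $w\in W^{2,1}_p(Q_T)$
\[
	\norm{b\cdot\nabla w}_{L^p(Q_T)}
	\le
	\norm{b}_{L^p(0,T;L^\infty(\Omega))}\,\norm{\nabla w}_{C([0,T];L^p(\Omega))}
	\le
	C\,\norm{b}_{L^p(0,T;L^\infty(\Omega))}\,\norm{w}_{W^{2,1}_p(Q_T)},
\]
and therefore $\mathcal T w\coloneqq\mathcal S\paren[big](){c-b\cdot\nabla w,\,r,\,\rho_0}$ defines a map $W^{2,1}_p(Q_T)\to W^{2,1}_p(Q_T)$ whose fixed points are exactly the strong solutions of the full problem. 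For two arguments the difference $\mathcal T w_1-\mathcal T w_2=\mathcal S\paren[big](){-b\cdot\nabla(w_1-w_2),\,0,\,0}$ vanishes at the initial time, and the embedding constant of $\setDef{v\in W^{2,1}_p((a,b)\times\Omega)}{v(a)=0}\embeds C([a,b];W^{1,p}(\Omega))$ can be taken independent of $b-a$ (extend by zero to the left of $a$ and use the estimate on a fixed reference interval); hence on any subinterval
\[
	\norm{\mathcal T w_1-\mathcal T w_2}_{W^{2,1}_p((a,b)\times\Omega)}
	\le
	C\,\norm{b}_{L^p(a,b;L^\infty(\Omega))}\,\norm{w_1-w_2}_{W^{2,1}_p((a,b)\times\Omega)}.
\]
Because $t\mapsto\norm{b}_{L^p(0,t;L^\infty(\Omega))}^p$ is absolutely continuous and vanishes at $t=0$, I would fix a partition $0=t_0<t_1<\dots<t_N=T$ with $C\,\norm{b}_{L^p(t_j,t_{j+1};L^\infty(\Omega))}\le\tfrac12$ for every $j$; then $\mathcal T$ is a contraction on $W^{2,1}_p((t_0,t_1)\times\Omega)$, its unique fixed point is the unique strong solution there, its value at $t_1$ lies in $W^{2-2/p,p}(\Omega)$ and serves as initial datum on $(t_1,t_2)$, and one iterates over the $N$ pieces. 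Concatenation yields the unique $\rho\in W^{2,1}_p(Q_T)$, and summing the piecewise a~priori estimates — in which $\norm{\rho(t_j)}_{W^{2-2/p,p}(\Omega)}$ is absorbed into the bound on the previous piece — gives the asserted continuous dependence on $c$, $r$ and $\rho_0$.

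The maximal-regularity input for the principal part is off-the-shelf once \cref{assumption:A1} and \cref{assumption:A2} are in force, so the real work lies in the perturbation bookkeeping. Two points must be handled with care: that $b\cdot\nabla\rho\in L^p(Q_T)$ with a bound governed solely by $\norm{\rho}_{W^{2,1}_p(Q_T)}$ — which is precisely where $p\ge 2$ enters, through $W^{2,1}_p(Q_T)\embeds C([0,T];W^{1,p}(\Omega))$ — and, more delicately, that the contraction factor can genuinely be made strictly less than one. The latter rests on the absolute continuity of $t\mapsto\norm{b}_{L^p(0,t;L^\infty(\Omega))}^p$ (so short time slabs contribute little drift) together with the uniformity in the slab length of the embedding constant for $W^{2,1}_p$-functions with vanishing initial trace (so the contraction constant does not degenerate as the pieces shrink). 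This is the step I expect to require the most attention.
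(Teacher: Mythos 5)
Your proposal is correct in substance, but it takes a genuinely different route from the paper: the paper's entire proof is the citation \enquote{see Theorem~2.1 in \cite{DenkHieberPruess:2007:1}}, since that theorem already admits variable lower-order coefficients (a drift $b\in L^p(0,T;L^\infty(\Omega))$ fits its coefficient class), so no perturbation argument is performed at all. You instead invoke Denk--Hieber--Pr\"uss only for the principal part with $b\equiv 0$ and absorb $b\cdot\nabla\rho$ by hand through a contraction on short time slabs, using $\norm{b\cdot\nabla w}_{L^p(Q_T)}\le\norm{b}_{L^p(0,T;L^\infty(\Omega))}\norm{\nabla w}_{C([0,T];L^p(\Omega))}$ and the absolute continuity of $t\mapsto\norm{b}^p_{L^p(0,t;L^\infty(\Omega))}$. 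What your approach buys is self-containedness (only constant-coefficient maximal regularity for the mixed Dirichlet--Neumann problem is needed) and an explicit display of where $p\ge 2$ and the regularity of $b$ enter; what the paper's approach buys is brevity, since the perturbation bookkeeping is already packaged in the cited theorem. You are also more explicit than the paper about compatibility conditions, which the lemma's statement silently suppresses.

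One point needs a small repair in your write-up: in the contraction estimate you must apply the slab-length-uniform embedding to the difference of the \emph{arguments} $w_1-w_2$, not to $\mathcal T w_1-\mathcal T w_2$. As stated, for arbitrary $w_1,w_2\in W^{2,1}_p((a,b)\times\Omega)$ the bound $\norm{\nabla(w_1-w_2)}_{C([a,b];L^p(\Omega))}\le C'\norm{w_1-w_2}_{W^{2,1}_p((a,b)\times\Omega)}$ fails to hold with $C'$ independent of $b-a$ (a time-constant function shows the constant degenerates like $(b-a)^{-1/p}$), so the displayed contraction inequality is not justified on the whole space. The fix is the standard one and is implicit in your own closing remark: run the fixed-point iteration on the closed affine subspace $\setDef{w\in W^{2,1}_p((t_j,t_{j+1})\times\Omega)}{w(t_j)=\rho(t_j)}$, which $\mathcal T$ maps into itself; then $w_1-w_2$ has vanishing initial trace and your uniform embedding constant applies. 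With that adjustment the argument, including the concatenation over the finitely many slabs and the resulting a~priori estimate, goes through.
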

\begin{proof}
	See Theorem~2.1 in \cite{DenkHieberPruess:2007:1}.
\end{proof}

Finally, we need the following regularity result for $p=2$ with flux boundary conditions.
\begin{lemma}\label{lem:H2_reg}
	Given $h \in H^1(0,T;H^1(\Omega)) \cap L^\infty(Q_T)$, $g \in L^\infty(\R)$ and $\rho_0 \in H^1(\Omega)$, the variational problem
	\begin{multline}\label{eq:rho_h_fixed}
		\int_{Q_T} \partial_t \rho\,\xi\dx\dt
		+ \eps\int_{Q_T} \nabla\rho\cdot \nabla \xi \dx\dt
		- \int_{Q_T} g(\rho)\,h\cdot \nabla \xi\dx\dt
		\\
		=
		-\eta \int_{\Sigma_T} \chi_{\partial\Omega_\textup{D}} \, \rho \, \xi \ds_x \dt 
		\quad 
		\text{for all } \xi \in L^2(0,T;H^1(\Omega))
	\end{multline}
	has a unique solution $\rho \in L^\infty(0,T;H^1(\Omega))\cap H^1(0,T;L^2(\Omega))$ with $\rho(0) = \rho_0$.
\end{lemma}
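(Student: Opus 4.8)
The plan is to build the solution by a Faedo--Galerkin scheme and pass to the limit; the only point that is not completely routine is a uniform bound on the time derivative $\partial_t\rho_n$. Recall that $g$ is bounded and Lipschitz continuous, with constants $C_g$, $L_g$. I would use as Galerkin basis the $L^2(\Omega)$-orthonormal eigenfunctions $\{w_k\}_{k\in\N}$ of $-\laplace+\mathrm{id}$ with homogeneous Neumann data on $\partial\Omega$; by \cref{assumption:A1} these are smooth and, since $(w_j,w_k)_{H^1(\Omega)}=\lambda_k\delta_{jk}$, they are also orthogonal in $H^1(\Omega)$. For $n\in\N$ one seeks $\rho_n(t)=\sum_{k=1}^n c_k^n(t)\,w_k$ solving the equations obtained from \eqref{eq:rho_h_fixed} by testing with $\xi=w_k$, $k=1,\dots,n$, together with $\rho_n(0)=\sum_{k=1}^n(\rho_0,w_k)_{L^2(\Omega)}\,w_k$. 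With this basis $\rho_n(0)$ is simultaneously the $L^2(\Omega)$- and $H^1(\Omega)$-orthogonal projection of $\rho_0$, so $\norm{\rho_n(0)}_{H^1(\Omega)}\le\norm{\rho_0}_{H^1(\Omega)}$ and $\rho_n(0)\to\rho_0$ in $H^1(\Omega)$; this is where $\rho_0\in H^1(\Omega)$ enters. Since $g$ is continuous and $h\in L^\infty(Q_T)$, the resulting ODE system for $(c_1^n,\dots,c_n^n)$ has a continuous, bounded right-hand side, so Peano's theorem gives a local solution and the bounds below make it global.

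The bounds come from two energy estimates. Testing with $\rho_n$ and using $\abs{(g(\rho_n)\,h,\nabla\rho_n)_{L^2(\Omega)}}\le C_g\norm{h}_{L^\infty(Q_T)}\abs{\Omega}^{1/2}\norm{\nabla\rho_n}_{L^2(\Omega)}$, Young's inequality and the nonnegativity of $\eta\norm{\rho_n}_{L^2(\partial\Omega_\textup{D})}^2$ yields, after integration in time, a bound on $\rho_n$ in $L^\infty(0,T;L^2(\Omega))\cap L^2(0,T;H^1(\Omega))$. For the stronger estimate one tests with $\partial_t\rho_n$, which lies in $\operatorname{span}\{w_1,\dots,w_n\}$ and is thus admissible; this produces $\norm{\partial_t\rho_n}_{L^2(\Omega)}^2+\frac{\eps}{2}\frac{\mathrm{d}}{\mathrm{d}t}\norm{\nabla\rho_n}_{L^2(\Omega)}^2+\frac{\eta}{2}\frac{\mathrm{d}}{\mathrm{d}t}\norm{\rho_n}_{L^2(\partial\Omega_\textup{D})}^2=(g(\rho_n)\,h,\nabla\partial_t\rho_n)_{L^2(\Omega)}$. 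The transport term on the right must \emph{not} be integrated by parts in space, since that would create an uncontrolled trace of $\partial_t\rho_n$ on $\partial\Omega$; instead I would integrate it by parts in time over $[0,t]$, which yields the endpoints $(g(\rho_n(t))h(t),\nabla\rho_n(t))_{L^2(\Omega)}$ and $-(g(\rho_n(0))h(0),\nabla\rho_n(0))_{L^2(\Omega)}$ together with the bulk term $-\int_0^t(\partial_s(g(\rho_n)h),\nabla\rho_n)_{L^2(\Omega)}\,\mathrm{d}s$, where $\partial_s(g(\rho_n)h)=g'(\rho_n)\,\partial_s\rho_n\,h+g(\rho_n)\,\partial_s h$. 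The endpoint at $t$ is absorbed into $\frac{\eps}{2}\norm{\nabla\rho_n(t)}_{L^2(\Omega)}^2$ via the same bound as above; the endpoint at $0$ is estimated by $C_g\norm{h(0)}_{L^2(\Omega)}\norm{\rho_0}_{H^1(\Omega)}$, using $h\in H^1(0,T;H^1(\Omega))\embeds C([0,T];H^1(\Omega))$; in the bulk term the first summand obeys $\abs{(g'(\rho_n)\partial_s\rho_n\,h,\nabla\rho_n)_{L^2(\Omega)}}\le L_g\norm{h}_{L^\infty(Q_T)}\norm{\partial_s\rho_n}_{L^2(\Omega)}\norm{\nabla\rho_n}_{L^2(\Omega)}$, whose time integral is split by Young's inequality into a piece absorbed into $\int_0^t\norm{\partial_s\rho_n}_{L^2(\Omega)}^2$ and a piece controlled by the first estimate, while the second summand obeys $\abs{(g(\rho_n)\partial_s h,\nabla\rho_n)_{L^2(\Omega)}}\le C_g\norm{\partial_s h}_{L^4(\Omega)}\norm{\nabla\rho_n}_{L^2(\Omega)}$, whose time integral is finite because $\partial_t h\in L^2(0,T;H^1(\Omega))\embeds L^2(0,T;L^4(\Omega))$ and $\nabla\rho_n\in L^2(0,T;L^2(\Omega))$ by the first estimate. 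Collecting everything, one obtains a bound on $\rho_n$ in $L^\infty(0,T;H^1(\Omega))\cap H^1(0,T;L^2(\Omega))$, uniform in $n$ and depending only on the data.

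Next I would pass to the limit. Along a subsequence, $\rho_n$ converges weakly-$*$ in $L^\infty(0,T;H^1(\Omega))$ and $\partial_t\rho_n\rightharpoonup\partial_t\rho$ in $L^2(0,T;L^2(\Omega))$ to a limit $\rho\in L^\infty(0,T;H^1(\Omega))\cap H^1(0,T;L^2(\Omega))$. By the Aubin--Lions lemma, using the compact embedding $H^1(\Omega)\embeds L^2(\Omega)$, $\rho_n\to\rho$ strongly in $C([0,T];L^2(\Omega))$ and, after a further subsequence, \ale in $Q_T$. Since $g$ is continuous and bounded, dominated convergence gives $g(\rho_n)\to g(\rho)$ in $L^2(Q_T)$, hence $g(\rho_n)\,h\to g(\rho)\,h$ in $L^2(Q_T)$ because $h\in L^\infty(Q_T)$; this allows passing to the limit in the transport term. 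The linear terms pass by weak convergence and the trace term by weak continuity of the trace $L^2(0,T;H^1(\Omega))\to L^2(\Sigma_T)$. Hence $\rho$ solves \eqref{eq:rho_h_fixed}, and $\rho(0)=\rho_0$ follows from $\rho_n(0)\to\rho_0$ and $\rho_n\to\rho$ in $C([0,T];L^2(\Omega))$.

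Uniqueness is standard: for two solutions $\rho_1,\rho_2$, the difference $w=\rho_1-\rho_2$ (admissible, $w\in L^2(0,T;H^1(\Omega))$), tested with itself, gives via $\abs{g(\rho_1)-g(\rho_2)}\le L_g\abs{w}$ and Young's inequality $\frac{\mathrm{d}}{\mathrm{d}t}\norm{w}_{L^2(\Omega)}^2\le C\norm{w}_{L^2(\Omega)}^2$ with $C=C(L_g,\norm{h}_{L^\infty(Q_T)},\eps)$; since $w(0)=0$, Gronwall's inequality forces $w\equiv 0$. The one real obstacle in the whole argument is the estimate for $\partial_t\rho_n$: although the transport term is of lower order, it does not act as an $L^2(Q_T)$-source, so the extra time derivative has to be gained by the integration by parts in time described above, and this is precisely where the hypotheses $h\in H^1(0,T;H^1(\Omega))\cap L^\infty(Q_T)$, the Lipschitz continuity of $g$, and $\rho_0\in H^1(\Omega)$ are all used.
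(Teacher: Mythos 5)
Your proposal is correct and follows essentially the same route as the paper's proof in \cref{section:auxiliary_results}: a Galerkin scheme with a basis orthogonal in both $L^2(\Omega)$ and $H^1(\Omega)$, a first energy estimate from testing with $\rho_n$, the key second estimate from testing with $\partial_t\rho_n$ and integrating the transport term by parts \emph{in time} (using $\partial_t\bigl(g(\rho_n)h\bigr)=g'(\rho_n)\partial_t\rho_n\,h+g(\rho_n)\partial_t h$ and the hypotheses on $h$, $g$ and $\rho_0$), then Aubin--Lions plus dominated convergence for the limit passage and a Lipschitz/Gronwall argument for uniqueness. The only differences are cosmetic (your explicit Neumann eigenfunction basis and the slightly wasteful $L^4$ bound on $\partial_t h$, where the paper's $L^2$ estimate suffices).
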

The proof mainly uses standard methods but since, to the best of the authors' knowledge, a proof matching our boundary conditions is not available in the literature, we included it into \cref{section:auxiliary_results}.

Next we define our notion of solution for the ODE \eqref{eq:forward_system3}. 
\begin{definition}\label{def:ODE_extended} 
	Fix $2 < p < \infty$ and $\rho \in W^{2,1}_p(Q_T)$. 
	Then for given $f$ and $u$ satisfying assumptions \cref{assumption:A4} and \cref{assumption:C1} and an initial value $x_0\in \R^2$ we say that $x \colon [0,T] \to \R^2$ is a solution to
	\begin{equation}\label{eq:def_generalized_ode}
		\dot x(t) 
		= 
		f\paren[big](){\overline\rho(x(t),t)} \, u(t),
	\end{equation}
	if it is absolutely continuous, satisfies \eqref{eq:def_generalized_ode} for \aa $t \in [0,T]$
	and $x(0) = x_0$.
\end{definition}

We have the following result about the existence of a solution of \eqref{eq:def_generalized_ode}.
\begin{lemma}\label{lem:ode_existence} 
	For given $2 <p < 4$, $u\in L^\infty(0,T;\R^2)$ satisfying assumption \cref{assumption:C1} and $\rho \in W^{2,1}_p(Q_T)$, there exists a unique, absolutely continuous solution $x \colon [0,T] \to \R^2$ to \eqref{eq:def_generalized_ode} satisfying $x(0) = x_0$. 
	Furthermore, $x \in W^{1,\infty}(0,T)$ holds.
	For $\rho_1,\,\rho_2 \in W^{2,1}_p(Q_T)$, the corresponding solutions $x_1$ and $x_2$ satisfy
	\begin{equation}\label{eq:ODE_stab}
		\norm{x_1-x_2}_{L^\infty(0,T;\R^2)} 
		\le 
		C_s\,\norm{\rho_1 - \rho_2}_{L^\infty(Q_T)}
		,
	\end{equation}
	where the constant $C_s$  depends on $T$, $C_\E$, $C_{\E,\infty}$ and the Lipschitz constants of $f$ and $\rho$.	
\end{lemma}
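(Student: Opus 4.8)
The plan is to recast \eqref{eq:def_generalized_ode} as a Carathéodory ordinary differential equation on the whole space, namely $\dot x(t) = b(t,x(t))$ with right-hand side $b(t,x) \coloneqq f\paren[big](){\overline\rho(x,t)}\,u(t)$ defined for \aa $t\in(0,T)$ and all $x\in\R^2$, to verify the Carathéodory hypotheses — measurability in $t$, Lipschitz continuity in $x$ with an $L^1$-in-time Lipschitz modulus, and a uniform bound — and then to invoke the classical global existence and uniqueness theorem for such equations. The uniform velocity bound $\abs{\dot x} \le C_f$ then also gives $x\in W^{1,\infty}(0,T)$ for free, and the stability estimate \eqref{eq:ODE_stab} follows from a Grönwall argument.

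The only step that is not entirely routine is producing the time-dependent Lipschitz constant for $x\mapsto\overline\rho(x,t)$. First I would record that, since $d=2$ and $p>2$, the Sobolev embedding $W^{2,p}(\Omega)\embeds C^{1,\alpha}(\closure\Omega)$ holds with $\alpha \coloneqq 1-2/p \in (0,1)$, so for \aa $t\in(0,T)$ one has $\rho(t,\cdot)\in C^{1,\alpha}(\closure\Omega)$ and $\norm{\rho(t,\cdot)}_{C^{1,\alpha}(\Omega)}\le C\,\norm{\rho(t,\cdot)}_{W^{2,p}(\Omega)} \eqqcolon \ell(t)$, where $\ell\in L^p(0,T)\subset L^1(0,T)$ by the very definition of $W^{2,1}_p(Q_T)$. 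Applying the extension operator of \cref{lem:extension} pointwise in time yields $\overline\rho(\cdot,t)=\E(\rho(t,\cdot))\in C^{1,\alpha}(\R^2)$ with $\norm{\overline\rho(\cdot,t)}_{C^{1,\alpha}(\R^2)}\le C_\E\,\ell(t)$; in particular $x\mapsto\overline\rho(x,t)$ is globally Lipschitz on $\R^2$ with constant $\le C_\E\,\ell(t)$ for \aa $t$. Measurability of $t\mapsto b(t,x)$ for fixed $x$ follows because $t\mapsto\rho(t,\cdot)\in W^{2,p}(\Omega)$ is strongly measurable, the composition with the continuous embedding and the continuous operator $\E$ is strongly measurable into $C^{1,\alpha}(\R^2)$, point evaluation is continuous there, and $u\in L^\infty(0,T;\R^2)$; continuity of $b(t,\cdot)$ for \aa $t$ is immediate; and \cref{assumption:A4}, \cref{assumption:C1} give $\abs{b(t,x)}\le C_f$.

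With these properties the Carathéodory theory provides a unique absolutely continuous solution $x$ of \eqref{eq:def_generalized_ode} with $x(0)=x_0$, no finite-time blow-up being possible because of $\abs{\dot x}\le C_f$, which also yields $x\in W^{1,\infty}(0,T)$. For \eqref{eq:ODE_stab} I would take two data $\rho_1,\rho_2$ with solutions $x_1,x_2$ and write
\begin{equation*}
	\dot x_1(t) - \dot x_2(t)
	= \paren[big](){f(\overline\rho_1(x_1(t),t)) - f(\overline\rho_1(x_2(t),t))}\,u(t)
	+ \paren[big](){f(\overline\rho_1(x_2(t),t)) - f(\overline\rho_2(x_2(t),t))}\,u(t),
\end{equation*}
bounding the first bracket by $L_f\,C_\E\,\ell_1(t)\,\abs{x_1(t)-x_2(t)}$ via the Lipschitz property above and the second by $L_f\,C_{\E,\infty}\,\norm{\rho_1-\rho_2}_{L^\infty(Q_T)}$ via the second estimate in \cref{lem:extension}. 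Integrating from $0$ (where $x_1(0)=x_2(0)=x_0$) and applying Grönwall's lemma with the $L^1$-kernel $L_f\,C_\E\,\ell_1$ gives $\norm{x_1-x_2}_{L^\infty(0,T)} \le C_s\,\norm{\rho_1-\rho_2}_{L^\infty(Q_T)}$ with $C_s = L_f\,C_{\E,\infty}\,T\,\exp\paren[big](){L_f\,C_\E\,\norm{\ell_1}_{L^1(0,T)}}$, which depends only on $T$, $C_\E$, $C_{\E,\infty}$, the Lipschitz constant of $f$, and — through $\norm{\ell_1}_{L^1(0,T)}$ — on the bound governing the Lipschitz constant of $\overline\rho_1$, as claimed.

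The main obstacle is precisely the observation underlying the second paragraph: extracting from $\rho\in L^p(0,T;W^{2,p}(\Omega))$ with $p>2$ that $\rho(t,\cdot)$ is $C^{1,\alpha}$, hence Lipschitz in space, for almost every $t$, with an $L^1(0,T)$ Lipschitz modulus. Once this is available the ODE theory and the Grönwall estimate are standard; some mild care is needed with the Carathéodory measurability hypotheses, since $\rho$ need \emph{not} be continuous in time into $C^{1,\alpha}$ — only into $W^{1,p}(\Omega)$, cf.\ \eqref{eq:w12q_embedding} — so one argues measurability rather than continuity of $t\mapsto b(t,x)$.
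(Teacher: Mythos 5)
Your proposal is correct and follows essentially the same route as the paper's proof: both recast \eqref{eq:def_generalized_ode} as a Carathéodory ODE, obtain the spatial Lipschitz modulus in $L^p(0,T)\subset L^1(0,T)$ from the embedding $W^{2,p}(\Omega)\embeds C^{1,\alpha}(\closure\Omega)$ together with the extension operator of \cref{lem:extension}, get $x\in W^{1,\infty}(0,T)$ from the uniform velocity bound, and prove \eqref{eq:ODE_stab} with the same two-term splitting and Grönwall's inequality. Your treatment of the measurability hypotheses is slightly more explicit than the paper's, and your constant $L_f\,C_{\E,\infty}$ for the second term (using linearity of $\E$) is a harmless minor variant of the paper's $L_f\,C_\E\,C_{\E,\infty}$; otherwise the arguments coincide.
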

\begin{proof}
	First note that $\rho \in L^p(0,T;W^{2,p}(\Omega)) \embeds L^p(0,T;C^{1,\alpha}(\Omega))$, for some $\alpha > 0$, so that the application of the extension operator from \cref{lem:extension} is well-defined. 
	Since also $f$ is Lipschitz continuous with Lipschitz constant $L_f$ by assumption \cref{assumption:A4}, the function $f(\overline\rho(t,x))$ satisfies the Carathéodory conditions (see \cref{def:Caratheodory} in the Appendix) and thus there exists a solution in the sense of \cref{def:ODE_extended}; see Ch.~I, Theorem~5.1 in \cite{Hale:2009:1}.
	Furthermore, as we also have $\overline\rho  \in L^p(0,T;C^{0,1}(\R^2))$, we obtain with $C_\rho(t) = \norm{\rho(t)}_{W^{2,p}(\Omega)}$ and the property $C_\rho\in L^p(0,T)$ that 
	\begin{align}
		\MoveEqLeft
		\abs{f(\overline\rho(t,x)) - f(\overline\rho(t,y))}
		\le 
		L_f\,\abs{\overline\rho(t,x)-\overline\rho(t,y)}
		\le 
		L_f\,\norm{\overline \rho(t)}_{C^{1,\alpha}(\R^2)}\,\abs{x-y}
		\nonumber
		\\ 
		&
		\le 
		L_f\,C_E\,\norm{\rho(t)}_{C^{1,\alpha}(\Omega)}\,\abs{x-y}
		\nonumber
		\\ 
		&
		\le L_{f,\E,\rho}(t)\,\abs{x-y}
		\quad\text{ for \aa\ } t \in [0,T] \text{ and all } x, y \in \R^2,
		\label{eq:ODE_Lipschitz2}
	\end{align}
	where $L_{f,\E,\rho}(t) \coloneqq L_f\,C_E\,C_\infty\,C_\rho(t)$ and with $C_\infty$ the embedding constant for $W^{2,p}(\Omega) \embeds C^{1,\alpha}(\overline\Omega)$.
	The estimate \eqref{eq:ODE_Lipschitz2} implies uniqueness by Ch.~I, Thm.~5.3 in \cite{Hale:2009:1}. The additional regularity $x \in W^{1,\infty}(0,T;\R^2)$ is a consequence of the boundedness in $L^\infty(0,T;\R^2)$ of the right-hand side of \eqref{eq:def_generalized_ode}. 
	To establish the stability estimate we show
	\makeatletter
	\ltx@ifclassloaded{numapde-preprint}{%
		}{
	}
	\makeatother
	\begin{align*}
		\MoveEqLeft
		\abs{x_1(t) - x_2(t)} 
		\le 
		\int_0^t \abs{f(\overline\rho_1(s,x_1(s))) - f(\overline\rho_2(s,x_2(s)))}\,\abs{u(s)}\ds 
		\\
		&
		\le 
		\int_0^t \abs{f(\overline\rho_1(s,x_1(s))) - f(\overline\rho_1(s,x_2(s)))} \ds 
		+\int_0^t \abs{f(\overline\rho_1(s,x_2(s))) - f(\overline\rho_2(s,x_2(s)))}\ds
		\\
		&
		\le 
		\int_0^t L_{f,\rho,\E}(s)\,\abs{x_1(s) - x_2(s)}\ds + L_f \, C_\E \, C_{\E,\infty}\, \,t\,\norm{\rho_1 - \rho_2}_{L^\infty(Q_t)},
	\end{align*}
	where we used that the extension $\E$ is also continuous with respect to the $L^\infty$-norm. 
	An application of Gronwall's inequality in integral form then yields, for $t \in (0,T)$,
	\begin{align*}
		\abs{x_1(t) - x_2(t)} 
		&
		\le 
		L_f \, C_\E \, C_{\E,\infty}\,t\,\norm{\rho_1 - \rho_2}_{L^\infty(Q_T)} \, \exp\paren[auto](){\int_0^t L_{f,\rho,\E}(r) \d r}
		\\
		&
		\le 
		C_s \paren[auto](){L_f, L_{f,\rho,\E}, C_\E, C_{\E,\infty}, T} \, \norm{\rho_1 - \rho_2}_{L^\infty(Q_T)}
		.
	\end{align*}
\end{proof}

Next, we state an existence and stability result for a regularized version of \eqref{eq:def_generalized_ode}. 
Note that the following result requires less regularity for the density function $\rho$.
\begin{lemma}\label{lem:ODE_reg} 
	Fix $2 <p < \infty$ and $\rho \in C([0,T];L^2(\Omega))$. 
	Then for given $u\in  L^\infty(0,T)$, $f$ satisfying assumption \cref{assumption:A4} and \cref{assumption:C1}, and every $\gamma > 0$, there exists, a unique, absolutely continuous solution $x \colon [0,T]\to \R^2$ to
	\begin{equation}\label{eq:ode_reg}
		\dot x(t) 
		= 
		f\paren[auto](){(\overline{\eta_\gamma \ast \rho})(t,x(t))} \, u(t)
	\end{equation}	
	satisfying $x(0) = x_0 \in \R^2$ and $x \in W^{1,\infty}(0,T)$. 
	Here, $\eta_\gamma$ is a standard mollifier as in \eqref{eq:def_mollifier} and $\ast$ denotes the convolution \wrt to the $x$-variable. 
	Furthermore, for $\rho_1,\,\rho_2 \in C([0,T];L^2(\Omega))$, the corresponding solutions $x_1$ and $x_2$ satisfy
	\begin{align}\label{eq:ODE_stab2}
		\abs{x_1(t)-x_2(t)} 
		\le 
		C_s \,\norm{\eta_\gamma}_{L^2(\Omega)}\,\norm{\rho_1 - \rho_2}_{L^\infty(0,T;L^2(\Omega))}
		,
	\end{align}
	where the constant $C_s$ depends on $T$, $C_\E$, $C_{\E,\infty}$ and the Lipschitz constants of $f$ and $\rho$.
\end{lemma}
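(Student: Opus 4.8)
The idea is to reduce the mollified ODE \eqref{eq:ode_reg} to the situation already treated in \cref{lem:ode_existence} by exploiting the smoothing of the convolution. First I would observe that for fixed $\gamma > 0$ the map $\rho \mapsto \eta_\gamma \ast \rho$ turns an $L^2(\Omega)$-function into a smooth function: for $\rho(t,\cdot)\in L^2(\Omega)$ one has, for every $x\in\R^2$,
\begin{equation*}
	\abs{(\eta_\gamma \ast \rho)(t,x)}
	\le
	\norm{\eta_\gamma}_{L^2(\R^2)}\,\norm{\rho(t,\cdot)}_{L^2(\Omega)}
\end{equation*}
by Cauchy--Schwarz, and similarly $\abs{\nabla(\eta_\gamma\ast\rho)(t,x)} \le \norm{\nabla\eta_\gamma}_{L^2(\R^2)}\,\norm{\rho(t,\cdot)}_{L^2(\Omega)}$, so that $\eta_\gamma \ast \rho \in C([0,T];C^{1,\alpha}(\closure{\Omega}))$ (indeed $C^\infty$ in $x$), with norms controlled by $\norm{\rho}_{C([0,T];L^2(\Omega))}$ and by $\gamma$-dependent constants. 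In particular the extension operator $\E$ of \cref{lem:extension} applies to $\eta_\gamma \ast \rho$ and $\overline{\eta_\gamma\ast\rho}(t,\cdot)$ is globally Lipschitz on $\R^2$ uniformly in $t$, with Lipschitz constant $L_f\,C_\E\,\norm{\nabla\eta_\gamma}_{L^2}\,\norm{\rho}_{C([0,T];L^2(\Omega))}$ after composing with $f$.

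With this regularity in hand, existence and uniqueness of an absolutely continuous solution with $x(0)=x_0$, together with $x\in W^{1,\infty}(0,T)$, follow exactly as in the proof of \cref{lem:ode_existence}: the right-hand side $(t,x)\mapsto f\paren[big](){(\overline{\eta_\gamma\ast\rho})(t,x)}\,u(t)$ satisfies the Carathéodory conditions (continuous in $x$ for a.a.\ $t$, measurable in $t$, bounded by $C_f\,\norm{u}_{L^\infty}\in L^1(0,T)$ since $f$ is bounded by \cref{assumption:A4} and $\abs{u}\le 1$ by \cref{assumption:C1}), which gives existence via Ch.~I, Thm.~5.1 in \cite{Hale:2009:1}; the uniform-in-$t$ global Lipschitz bound in $x$ gives uniqueness via Ch.~I, Thm.~5.3 in \cite{Hale:2009:1}; and $x\in W^{1,\infty}(0,T)$ because $\dot x$ is bounded by $C_f$.

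For the stability estimate \eqref{eq:ODE_stab2}, I would proceed as in \cref{lem:ode_existence}: writing $x_i$ for the solution driven by $\rho_i$, integrating the ODE and splitting,
\begin{align*}
	\abs{x_1(t)-x_2(t)}
	&\le
	\int_0^t \abs{f\paren[big](){\overline{\eta_\gamma\ast\rho_1}(s,x_1(s))} - f\paren[big](){\overline{\eta_\gamma\ast\rho_1}(s,x_2(s))}}\ds
	\\
	&\quad
	+ \int_0^t \abs{f\paren[big](){\overline{\eta_\gamma\ast\rho_1}(s,x_2(s))} - f\paren[big](){\overline{\eta_\gamma\ast\rho_2}(s,x_2(s))}}\ds,
\end{align*}
where $\abs{u}\le 1$ was used. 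The first integrand is bounded by the uniform Lipschitz constant above times $\abs{x_1(s)-x_2(s)}$; the second is bounded, using the $L^\infty$-continuity of $\E$ and Cauchy--Schwarz for the convolution, by $L_f\,C_{\E,\infty}\,\norm{\eta_\gamma}_{L^2(\Omega)}\,\norm{\rho_1(s,\cdot)-\rho_2(s,\cdot)}_{L^2(\Omega)} \le L_f\,C_{\E,\infty}\,\norm{\eta_\gamma}_{L^2(\Omega)}\,\norm{\rho_1-\rho_2}_{L^\infty(0,T;L^2(\Omega))}$. Gronwall's inequality in integral form then yields \eqref{eq:ODE_stab2} with a constant $C_s$ depending on $T$, $C_\E$, $C_{\E,\infty}$ and the Lipschitz constants of $f$ and $\rho$ (the dependence on $\norm{\nabla\eta_\gamma}_{L^2}$ enters the Gronwall exponent but is harmless for fixed $\gamma$).

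The only subtlety — and the reason this lemma is stated separately rather than subsumed into \cref{lem:ode_existence} — is precisely the point emphasized in the statement: here $\rho$ need only lie in $C([0,T];L^2(\Omega))$, with \emph{no} parabolic $W^{2,1}_p$-regularity, so the spatial smoothness needed to make the extension operator and the Lipschitz/Carathéodory arguments go through must be manufactured entirely by the mollifier $\eta_\gamma$. Thus the bookkeeping of $\gamma$-dependent constants ($\norm{\eta_\gamma}_{L^2}$, $\norm{\nabla\eta_\gamma}_{L^2}$) and the verification that $\eta_\gamma\ast\rho$ is continuous in $t$ with values in $C^{1,\alpha}(\closure\Omega)$ — which uses continuity of $\rho$ into $L^2(\Omega)$ together with boundedness of convolution against $\eta_\gamma$ and $\nabla\eta_\gamma$ — is the main (if mild) obstacle; everything after that is a verbatim repetition of the Carathéodory/Gronwall argument from \cref{lem:ode_existence}.
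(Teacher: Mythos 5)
Your proposal is correct and follows essentially the same route as the paper: existence via the Carathéodory conditions (Hale, Ch.~I, Thm.~5.1), uniqueness from the $\gamma$-dependent spatial Lipschitz bound on $f\paren[big](){\overline{\eta_\gamma\ast\rho}}$ (Hale, Ch.~I, Thm.~5.3), and stability by the same splitting-plus-Gronwall argument. The only cosmetic differences are that the paper obtains the Lipschitz property of $\eta_\gamma\ast\rho$ from \cref{lem:conv_lipschitz} (Lipschitz continuity of $\eta_\gamma$) rather than from Cauchy--Schwarz with $\nabla\eta_\gamma$, and it packages the stability step as an application of \cref{lem:ode_existence} to $\eta_\gamma\ast\rho_i$ followed by Young's convolution inequality instead of redoing the Gronwall estimate, which yields the same bound.
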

\begin{proof}
	As $f$ and $\overline{\eta_\gamma \ast \rho}$ are Lipschitz continuous (see assumption \cref{assumption:A4} and \cref{lem:conv_lipschitz}), the function $f\paren[auto](){(\overline{\eta_\gamma \ast \rho})(t,x)}$ satisfies the Carathéodory conditions of \cref{def:ODE_extended} and thus there exists a solution in the sense of \cref{def:ODE_extended}; see Ch.~I, Thm.~5.1 in \cite{Hale:2009:1}. 
	In particular there exists a positive function $L_{f,\gamma,\rho,\E} \in L^p(0,T)$ such that
	\begin{equation}\label{eq:ODE_Lipschitz}
		\abs{f((\overline{\eta_\gamma \ast \rho})(t,x)) - f((\overline{\eta_\gamma \ast \rho})(t,y))} 
		\le 
		L_{f,\gamma, \rho,\E}(t)\,\abs{x-y} 
	\end{equation}
	for \aa $t \in [0,T]$ and all $x, y \in \Omega$.
	This implies uniqueness by Ch.~I, Thm.~5.3 in \cite{Hale:2009:1}. To show the stability estimate we apply \cref{lem:ode_existence} with $\rho_1$ and $\rho_2$ replaced by $\eta_\gamma \ast \rho_1$ and $\eta_\gamma \ast \rho_2$, respectively, which yields
	\begin{align*}
		\norm{x_1-x_2}_{L^\infty(0,T)} 
		\le 
		C_s\,\norm{\eta_\gamma \ast (\rho_1 - \rho_2)}_{L^\infty(Q_T)}
	\end{align*}
	with $C_s$ defined as in the proof of Lemma~\ref{lem:ode_existence}.
	Applying Young's inequality for convolutions to the norm on the right-hand side allows us to estimate
	\begin{equation*}
		\norm{\eta_\gamma \ast (\rho_1 - \rho_2)}_{L^\infty(Q_T)}
		\le 
		\norm{\eta_\gamma}_{L^2(\Omega)}\,\norm{\rho_1 - \rho_2}_{L^\infty(0,T;L^2(\Omega))}
		,
	\end{equation*}
	which completes the proof. 
\end{proof}

We continue with the following Lipschitz estimate for the transport term in \eqref{eq:forward_system1}, which is needed in the theorem right after the next.
\begin{lemma}\label{lem:beta_lipschitz}
	Given $\rho_1,\,\rho_2 \in C([0,T];H^1(\Omega))$, $\phi_1,\,\phi_2 \in L^\infty(0,T;W^{2,p}(\Omega))$ and $\bx_1,\,\bx_2 \in L^\infty(0,T;\R^2)$ the function $\beta(\rho,\phi,\bx)$ satisfies the Lipschitz inequality
	\begin{align*}
		\MoveEqLeft
		\norm{\rho_1\,\beta(\rho_1,\phi_1,\bx_1) - \rho_2\,\beta(\rho_2,\phi_2,\bx_2)}_{L^2(Q_T)} 
		\\
		&
		\le 
		L_\beta \paren[auto](){ \norm{\rho_1-\rho_2}_{L^2(Q_T)} + \norm{\nabla \phi_1 - \nabla \phi_2}_{L^2(Q_T)} + \norm{\bx_1 - \bx_2}_{L^\infty(0,T;\R^2)^M}},
	\end{align*}
	where $L_\beta$ depends on the Lipschitz and boundedness constants of $f$, $h$ and $K$.
\end{lemma}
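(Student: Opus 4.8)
The plan is to write the difference $\rho_1\,\beta(\rho_1,\phi_1,\bx_1) - \rho_2\,\beta(\rho_2,\phi_2,\bx_2)$ as a telescoping sum, changing one argument at a time, and to estimate each term separately using the boundedness and Lipschitz properties of the constituent functions. Recall $\beta(\rho,\phi,\bx) = f(\rho)\,h\paren[big](){\nabla(\phi + \phi_K(\bx;\cdot))}$ and set $g(x) = x\,f(x)$, so that $\rho\,\beta(\rho,\phi,\bx) = g(\rho)\,h\paren[big](){\nabla\phi + \nabla\phi_K(\bx;\cdot)}$. I would split as
\begin{align*}
	g(\rho_1)\,h(\dots_1) - g(\rho_2)\,h(\dots_2)
	&=
	\paren[big](){g(\rho_1) - g(\rho_2)}\,h(\dots_1)
	\\
	&\quad
	+ g(\rho_2)\,\paren[big](){h(\nabla\phi_1 + \nabla\phi_{K,1}) - h(\nabla\phi_2 + \nabla\phi_{K,1})}
	\\
	&\quad
	+ g(\rho_2)\,\paren[big](){h(\nabla\phi_2 + \nabla\phi_{K,1}) - h(\nabla\phi_2 + \nabla\phi_{K,2})}
	,
\end{align*}
where $\nabla\phi_{K,j} = \nabla\phi_K(\bx_j;\cdot)$. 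Taking $L^2(Q_T)$-norms and applying the triangle inequality reduces the claim to bounding these three pieces.

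For the first piece, $h$ is bounded by $C_h$ (its range is the unit ball up to the smoothing in $\minepsilon$), and $g = x f(x)$ is globally Lipschitz with constant $L_g$ because $f \in W^{3,\infty}(\R)\cap C_c(\R)$ by \cref{assumption:A4}; hence this term is bounded by $C_h\,L_g\,\norm{\rho_1 - \rho_2}_{L^2(Q_T)}$. For the second piece, $g(\rho_2)$ is bounded in $L^\infty(Q_T)$ by $C_g$ (again using that $f$ has compact support, or alternatively that $0 \le \rho \le 1$), and $h$ is $L_h$-Lipschitz, so this term is at most $C_g\,L_h\,\norm{\nabla\phi_1 - \nabla\phi_2}_{L^2(Q_T)}$. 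For the third piece I again use $\norm{g(\rho_2)}_{L^\infty} \le C_g$ and the Lipschitz continuity of $h$, which leaves the task of estimating $\norm{\nabla\phi_{K,1} - \nabla\phi_{K,2}}_{L^2(Q_T)}$. Since $\phi_K(\bx;x) = \sum_{i=1}^M K(x - x_i(t))$ and $K \in W^{3,\infty}(\R^2)$ by \cref{assumption:K2}, the gradient $\nabla K$ is globally Lipschitz, so pointwise $\abs{\nabla K(x - x_{i,1}(t)) - \nabla K(x - x_{i,2}(t))} \le L_{\nabla K}\,\abs{x_{i,1}(t) - x_{i,2}(t)}$; summing over $i$, integrating over $Q_T$ (the domain has finite measure $\abs{Q_T} = T\abs{\Omega}$), and using $\abs{\cdot} \le \norm{\cdot}_{L^\infty(0,T)}$ in time gives a bound by $C\,\norm{\bx_1 - \bx_2}_{L^\infty(0,T;\R^2)^M}$ with a constant depending on $M$, $T$, $\abs{\Omega}$ and $L_{\nabla K}$.

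Combining the three estimates and absorbing all constants into a single $L_\beta$ depending on the Lipschitz and boundedness constants of $f$ (hence $g$), $h$ and $K$ completes the proof. The only mild subtlety — hardly an obstacle — is ensuring $g(\rho_2)$ is genuinely bounded in $L^\infty(Q_T)$: this follows either from the compact support of $f$ in \cref{assumption:A4}, which makes $g$ globally bounded on all of $\R$, or from the a~priori bound $0 \le \rho \le 1$ noted in the remarks after the assumptions. The regularity hypotheses $\rho_j \in C([0,T];H^1(\Omega))$ and $\phi_j \in L^\infty(0,T;W^{2,p}(\Omega))$ are exactly what is needed to make all the terms finite: $\nabla\phi_j \in L^\infty(0,T;W^{1,p}(\Omega)) \hookrightarrow L^\infty(0,T;L^2(\Omega))$, so every norm on the right-hand side is well-defined, and no further compactness or fixed-point argument is required — the estimate is a direct consequence of elementary Lipschitz bookkeeping.
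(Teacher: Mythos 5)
Your proposal is correct and follows essentially the same route as the paper: the paper also writes $\rho\,\beta(\rho,\phi,\bx) = g(\rho)\,h\paren[big](){\nabla(\phi+\phi_K(\bx;\cdot))}$ with $g(\rho)=\rho f(\rho)$ and bounds the difference by the same telescoping argument, using the boundedness of $h$, the Lipschitz continuity of $g$ and $h$, and the Lipschitz dependence of $\nabla\phi_K$ on $\bx$ via \cref{assumption:K2}. Your version merely spells out the three-term splitting and the kernel estimate that the paper compresses into a single display, so there is nothing to add.
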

\begin{proof}
	We define $g(\rho) = \rho\,f(\rho)$ and write $\rho_i\,\beta(\rho_i,\phi_i,\bx_i) = g(\rho_i) \, h\paren[big](){\nabla(\phi_i+\phi_K(\bx_i;\cdot))}$, $i=1,2$. Using the Lipschitz continuity of $g$, $h$ and $\phi_K$ we obtain
	\begin{align*}
		\MoveEqLeft
		\norm[big]{\rho_1\,\beta(\rho_1,\phi_1,\bx_1) - \rho_2\,\beta(\rho_2,\phi_2,\bx_2)}_{L^2(0,T;L^2(\Omega))}
		\\
		&
		\le 
		L_g\,\norm{\rho_1-\rho_2}_{L^2(Q_T)}\,\norm[big]{h\paren[big](){\nabla(\phi_1+\phi_K(\bx_1;\cdot))}}_{L^\infty(Q_T)}
		\\
		&
		\quad
		+ C_g \, L_h \paren[auto](){\norm{\nabla(\phi_1-\phi_2)}_{L^2(Q_T)} + L_{\phi_K} \norm{\bx_1-\bx_2}_{L^\infty(0,T;\R^2)^M}}
		.
	\end{align*}
\end{proof}

\subsection{Existence for the Full Forward System}
We are now in a position to show the following existence and uniqueness result.
\begin{theorem}\label{thm:ex_state} 
	Let assumptions \cref{assumption:A1,assumption:A2,assumption:A3,assumption:A4}, \cref{assumption:K1,assumption:K2,assumption:C1} hold and fix $2 <p < \infty$. 
	Then for any given control $\bu = (u_1,\ldots, u_M)^\transp \in L^\infty(0,T;\R^2)^M$ and any $T>0$, there exists a unique solution $(\rho,\phi,\bx)$ to \eqref{eq:forward_system} with initial and boundary conditions \eqref{eq:hreg_initial}, \eqref{eq:forward_bc}, \eqref{eq:ODE_init} which satisfies $\rho \in W^{2,1}_p(Q_T)$, $\phi\in L^\infty(0,T;W^{2,p}(\Omega))$ and $\bx$ is a solution to \eqref{eq:forward_system3} in the sense of \cref{def:ODE_extended}. 
	Moreover, the a~priori estimate
	\begin{equation*}
		\norm{\rho}_{W^{2,1}_p(Q_T)}
		+ \norm{\phi}_{L^\infty(0,T;W^{2,p}(\Omega))} 
		\le 
		C \norm{\rho_0}_{W^{1,p}(\Omega)},
	\end{equation*}
	holds with $C$ depending only on the domain, the bounds for the coefficients and the respective kernel.
\end{theorem}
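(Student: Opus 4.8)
The plan is to prove existence and uniqueness by a fixed-point argument over the coupling, decoupling the three equations in the natural order suggested by their dependencies. Concretely, I would set up a map $\mathcal{T}$ on a closed, bounded, convex subset of $C([0,T];H^1(\Omega))$ (or a ball in $W^{2,1}_p(Q_T)$ equipped with a weaker topology), sending a given density $\widetilde\rho$ to a new density $\rho$ as follows: (1) solve the regularized Eikonal equation \eqref{eq:forward_system2} for $\phi = \phi(\widetilde\rho)$ via \cref{lem:ex_phi}, obtaining $\phi \in L^\infty(0,T;W^{2,p}(\Omega))$ with the a~priori bound \eqref{eq:phi_apriori}; (2) solve the agent ODEs \eqref{eq:forward_system3} for $\bx = \bx(\widetilde\rho)$ via \cref{lem:ode_existence} (or its mollified variant \cref{lem:ODE_reg} in the low-regularity stage of the iteration), obtaining $\bx \in W^{1,\infty}(0,T;\R^2)^M$; (3) with $\phi$ and $\bx$ now fixed, solve the linear parabolic problem for $\rho$ obtained from \eqref{eq:forward_system1} by freezing the transport coefficient $b \coloneqq \beta(\widetilde\rho,\phi,\bx)$ (or $g(\widetilde\rho)$-type nonlinearity handled as in \cref{lem:H2_reg}), using \cref{lem:W2p_regularity} to get $\rho \in W^{2,1}_p(Q_T)$. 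A fixed point of $\mathcal{T}$ is precisely a solution of the coupled system.

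For existence I would first run the argument at the $H^1$-level: using \cref{lem:H2_reg} one obtains a self-map on a ball in $C([0,T];H^1(\Omega))\cap H^1(0,T;L^2(\Omega))$, for which the mollified ODE \cref{lem:ODE_reg} supplies the needed continuity of $\bx$ in the density, and compactness of the embedding of $H^1(0,T;L^2(\Omega))\cap L^2(0,T;H^1(\Omega))$ into $L^2(Q_T)$ (Aubin–Lions), together with the Lipschitz estimates \eqref{eq:phi_stability}, \eqref{eq:ODE_stab2}, and \cref{lem:beta_lipschitz}, yields continuity and compactness of $\mathcal{T}$; Schauder's fixed-point theorem then gives a solution. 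The maximum principle for \eqref{eq:forward_system1} with the flux boundary conditions \eqref{eq:forward_bc} — using $f(\rho)=0$ at $\rho = 1$ and $f \ge 0$, plus $0\le\rho_0\le 1$ from \cref{assumption:A3} — gives the crucial bound $0 \le \rho \le 1$ a.e., which in turn makes the nonlinearities genuinely bounded and Lipschitz (\cref{assumption:A4}, \cref{assumption:K2}). A bootstrap step then upgrades regularity: once $\rho$ is known to be bounded and $\phi\in L^\infty(0,T;W^{2,p}(\Omega))$, the frozen coefficient $b = \beta(\rho,\phi,\bx)$ lies in $L^p(0,T;L^\infty(\Omega))$ with the requisite trace regularity for the Neumann data ($r = 0$ here), so \cref{lem:W2p_regularity} delivers $\rho \in W^{2,1}_p(Q_T)$ for the chosen $2<p<\infty$; feeding this back, \cref{lem:ex_phi}(ii) and \cref{lem:ode_existence} (now the genuine, non-mollified ODE, legitimate since $\rho\in W^{2,1}_p(Q_T)\embeds C^{1,\alpha}$ in space) close the regularity loop. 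Passing $\gamma\to 0$ in the mollification, using \eqref{eq:convergence_mollification} and the uniform bounds, identifies the limit as a solution of the original (unmollified) ODE \eqref{eq:forward_system3}.

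For uniqueness I would take two solutions $(\rho_1,\phi_1,\bx_1)$, $(\rho_2,\phi_2,\bx_2)$, subtract the equations, and estimate the differences: \cref{lem:ex_phi}(iii) controls $\norm{\phi_1-\phi_2}_{W^{2,2}(\Omega)}$ by $\norm{\rho_1-\rho_2}_{L^2(\Omega)}$ pointwise in $t$; \cref{lem:ode_existence} controls $\norm{\bx_1-\bx_2}_{L^\infty(0,T)}$ by $\norm{\rho_1-\rho_2}_{L^\infty(Q_T)}$; and testing the difference of the parabolic equations with $\rho_1-\rho_2$, using \cref{lem:beta_lipschitz} for the transport term and absorbing the $\eps$-diffusion, yields a Grönwall inequality for $t\mapsto\norm{(\rho_1-\rho_2)(t)}_{L^2(\Omega)}^2$ — provided one first bootstraps the difference to the $L^\infty(Q_T)$-norm needed by the ODE estimate, which follows from $W^{2,1}_p(Q_T)\embeds C(\closure{Q_T})$ and the continuous dependence in \cref{lem:W2p_regularity}. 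The a~priori estimate in the statement then follows by collecting the bounds: the maximum principle plus \cref{lem:W2p_regularity} give $\norm{\rho}_{W^{2,1}_p(Q_T)}\le C\norm{\rho_0}_{W^{1,p}(\Omega)}$ (note $W^{2-2/p,p}(\Omega)\hookleftarrow W^{3/2,4}(\Omega)$ by \cref{assumption:A3} and $p<4$, so the trace of $\rho_0$ is as required, and the zeroth-order and flux data vanish), and \eqref{eq:phi_apriori} bounds $\phi$ accordingly.

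The main obstacle I expect is the circular dependence in the regularity: the genuine ODE \eqref{eq:forward_system3} requires $\rho\in W^{2,1}_p(Q_T)$ (to make $\overline\rho(t,\cdot)$ Lipschitz via \cref{lem:extension}), but obtaining that regularity for $\rho$ requires knowing the transport coefficient, which depends on $\bx$, hence on $\rho$. Resolving this cleanly is exactly why the proof needs two stages — first a low-regularity fixed point using the mollified ODE \cref{lem:ODE_reg} (which only needs $\rho\in C([0,T];L^2(\Omega))$), then a bootstrap and a limit $\gamma\to0$ — and care is required to ensure the uniform-in-$\gamma$ bounds genuinely survive the passage to the limit, and that the limiting $\rho$ really does have the full $W^{2,1}_p$-regularity rather than merely the $H^1$-regularity one starts with. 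A secondary technical point is verifying that the frozen transport coefficient $b=g(\rho)h(\nabla(\phi+\phi_K))/\rho$ — or rather the divergence-form term $\nabla\cdot(\rho\,\beta)$ rewritten as $g(\rho)\,h(\cdots)$ — fits the structural hypotheses of \cref{lem:W2p_regularity} (i.e.\ lies in $L^p(0,T;L^\infty(\Omega))$ after the bootstrap), which hinges on $\nabla\phi\in L^\infty(0,T;L^\infty(\Omega))$ from the $W^{2,p}\embeds C^{0,1/2}$ embedding in two dimensions and on $\nabla\phi_K$ being bounded by \cref{assumption:K2}.
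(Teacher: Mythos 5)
Your overall architecture coincides with the paper's: a low-regularity fixed-point stage in which the agent ODE is replaced by its mollified version (\cref{lem:ODE_reg}), a bootstrap to $W^{2,1}_p(Q_T)$ by freezing coefficients and invoking maximal parabolic regularity (\cref{lem:W2p_regularity}), and finally the limit $\gamma\to 0$ using \eqref{eq:convergence_mollification} and a Gronwall argument to recover the unmollified ODE. The one genuine methodological difference is the fixed-point tool: you use Schauder (compactness via Aubin--Lions, continuity via \eqref{eq:phi_stability}, \eqref{eq:ODE_stab2} and \cref{lem:beta_lipschitz}) and then prove uniqueness separately, whereas the paper runs a Banach contraction in $C([0,T];L^2(\Omega))$ for small $T$ followed by continuation, which yields uniqueness of the regularized problem for free. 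Both routes are viable; your uniqueness sketch, however, is the weaker link: the ODE stability \eqref{eq:ODE_stab} requires $\norm{\rho_1-\rho_2}_{L^\infty(Q_T)}$, which an $L^2$-Gronwall does not deliver, and appealing to \eqq{continuous dependence in \cref{lem:W2p_regularity}} is circular unless you set up a genuine $W^{2,1}_p$-level contraction/Gronwall on a short time interval and then concatenate -- this needs to be spelled out.

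The concrete gap is in the bootstrap step, where you assert the Neumann datum is zero (\eqq{$r=0$ here}). It is not: rewriting \eqref{eq:forward_system1} with \eqref{eq:forward_bc} in the form required by \cref{lem:W2p_regularity} leaves the inhomogeneous flux datum $r = -g(\rho)\,h(\Phi)\cdot n - \eta\,\rho\,\chi_{\partial\Omega_\textup{D}}$ on the lateral boundary, cf.\ \eqref{eq:full_def_r}, and the whole difficulty is to show $r \in W^{1-1/p,1/2-1/(2p)}_p(\Sigma_T)$. This requires bounds on both spatial and \emph{temporal} derivatives of $g(\rho)\,h(\nabla(\phi+\phi_K(\bx;\cdot)))$ in $Q_T$ (the paper's \eqref{eq:bd_term_H1_bound}--\eqref{eq:bd_term_temporal_bound}) followed by the trace theorem, \cref{lem:trace_theorem}; in particular one needs $\partial_t\nabla\phi$, which is why \cref{lem:ex_phi}\,(i)--(ii) and the $H^1(0,T;H^1(\Omega))$ regularity of $\phi$ enter, and why the paper bootstraps in two rounds (first $W^{2,1}_2$, then $W^{2,1}_p$ with the restriction $p\le 4$ that also matches the initial datum $\rho_0\in W^{3/2,4}(\Omega)$). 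Without this verification the application of \cref{lem:W2p_regularity} is not justified, so this step must be added to make your argument complete; once it is, the rest of your plan (uniform-in-$\gamma$ bounds, weak limits, convergence of the boundary terms and of the trajectories) matches the paper's Step~3.
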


The structure of \eqref{eq:forward_system1}--\eqref{eq:forward_system2} is very similar to chemotaxis models with volume filling, see for instance \cite{PainterHillen:2002:1}, except for the additional nonlinearity of the squared gradient term in \eqref{eq:forward_system2}, which can, however, be handled using \cref{lem:ex_phi}. 
Therefore, the existence and uniqueness of solutions can be proved using Banach's fixed point theorem, similar to, \eg, Thm.~3.1 in \cite{EggerPietschmannSchlottbom:2015:1}. 
The main issue in our situation is the additional coupling to the system of ODEs \eqref{eq:forward_system3}, which requires $\rho$ to be regular enough to allow point evaluations. 
Our strategy is to introduce an additional regularization in \eqref{eq:forward_system3} in order to be able to perform the fixed point argument in the relatively \eqq{large} space $C([0,T];L^2(\Omega))$. 
We then show additional regularity and pass to the limit to recover the original system.

\begin{proof}
	The proof consists of two parts. 
	First we show existence with \eqref{eq:forward_system3} replaced by the regularized version \eqref{eq:ode_reg}. Then, we perform the limit $\gamma \to 0$ to recover the original problem. 

	\underline{Step 1: Fixed point argument}:
	We consider the fixed-point operator
	\begin{align}
		S \colon C([0,T];L^2(\Omega)) \to C([0,T];L^2(\Omega))
		,
		\quad 
		\widetilde \rho \mapsto \rho_\gamma
		,
	\end{align}
	where, for fixed $\gamma > 0$, $\rho_\gamma$ is the unique weak solution to the system
	\begin{alignat}{2}
		\label{eq:rho_lin}
		\partial_t\rho_\gamma - \nabla\cdot\paren[big](){\widetilde\rho\,\beta(\widetilde \rho, \phi_\gamma, \bx_\gamma)} 
		&
		=
		\eps\,\laplace \rho_\gamma 
		&
		&
		\quad 
		\text{in } Q_T,
		\\
		\label{eq:phi_reg_lin}
		- \delta_1\,\laplace \phi_\gamma + \abs{\nabla \phi_\gamma}^2 
		&
		= \frac{1}{f(\widetilde \rho)^2+\delta_2} 
		&
		&
		\quad 
		\text{in } Q_T 
		,
		\\ 
		\label{eq:ODE_reg}
		\dot x_{i,\gamma}(t) 
		&
		= 
		f\paren[big](){(\overline{\eta_\gamma \ast\widetilde\rho})(t,x_{i,\gamma}(t))}\,u_i(t),
		&
		& 
		\quad 
		\text{for } t \in [0,T],
		\quad 
		i=1,\ldots, M
		\intertext{subject to the boundary conditions}
		\varepsilon\,\nabla\rho_\gamma \cdot n
		&
		= 
		- \widetilde\rho\,\beta(\widetilde\rho,\phi_\gamma,\bx_\gamma)\cdot n - \eta\,\rho_\gamma\,\chi_{\partial\Omega_\textup{D}} 
		&
		&
		\quad
		\text{on } \Sigma_T
		.
		\nonumber
	\end{alignat}
	The boundary conditions for $\phi$ and the initial conditions for all variables are as in \eqref{eq:forward_bc} and \eqref{eq:hreg_initial}, \eqref{eq:ODE_init}, respectively.
	The equation for $\rho_\gamma$ is understood in the weak sense, \ie,
	\begin{equation}
		\label{eq:weak_form_rho_equation_reg}
		\int_\Omega \partial_t \rho_\gamma\,\xi \dx + \varepsilon \int_\Omega \nabla \rho_\gamma\cdot \nabla \xi \dx
		= 
		- \int_\Omega \widetilde\rho\,\beta(\widetilde\rho,\phi_\gamma,\bx_\gamma)\cdot\nabla \xi \dx
		- \eta \int_{\partial\Omega_\textup{D}} \rho_\gamma \, \xi \ds_x
	\end{equation}
	for all $\xi\in H^1(\Omega)$ and \aa $t\in [0,T]$. 
	Applying \cref{lem:H2_reg} with $g(\rho) = \rho f(\rho)$ and $h = h\paren[auto](){\nabla (\phi_\gamma + \phi_K(\bx_\gamma;\cdot))}$ shows that there exists a unique weak solution 
	\begin{equation*}
		\rho_\gamma \in L^\infty(0,T;H^1(\Omega))\cap H^1(0,T;L^2(\Omega)) \embeds 
		C([0,T];L^2(\Omega))
		.
	\end{equation*}
	Choosing $\rho$ itself as a test function in the weak formulation \eqref{eq:weak_form_rho_equation_reg} yields, after an application of the weighted Young's inequality with parameter $\eps/ 2$, the trace inequality for $H^1$-functions, and an integration in time, the a~priori estimate
	\begin{align*}
		\norm{\rho_\gamma}_{C([0,T];L^2(\Omega))}
		&
		\le 
		C \paren[big](){\norm{\widetilde \rho\,\beta(\widetilde \rho,\phi_\gamma,\bx_\gamma)}_{L^2(0,T; L^2(\Omega))} + \norm{\rho_0}_{L^2(\Omega)}} 
		\\
		&
		\le 
		C \paren[auto](){\sqrt{T\,\abs{\Omega}} +  \norm{\rho_0}_{L^2(\Omega)}}
		,
	\end{align*}
	where we used $\abs{\tilde \rho\,\beta(\tilde \rho,\phi_\gamma, \bx_\gamma)} \le 1$.

	Next, we show that $S$ is a contraction. 
	Let $\widetilde \rho_1, \, \widetilde \rho_2 \in C([0,T];L^2(\Omega))$ be arbitrary.
	The corresponding densities, potentials and agent positions are denoted by $\rho_{\gamma,1}$, $\rho_{\gamma,2}$, $\phi_{\gamma,1}$, $\phi_{\gamma,2}$ and $\bx_{\gamma,1}$, $\bx_{\gamma,2}$, respectively.
	Using \cref{lem:beta_lipschitz} we have
	\begin{multline}
		\label{eq:rho_stab_full}
		\norm{\rho_{\gamma,1}-\rho_{\gamma,2}}_{C([0,T];L^2(\Omega))}
		\le
		\norm{\widetilde \rho_1\,\beta(\widetilde \rho_1,\phi_{\gamma,1},\bx_{\gamma,1})
		-\widetilde \rho_2\,\beta(\widetilde \rho_2,\phi_{\gamma,2},\bx_{\gamma,2})}_{L^2(0,T;L^2(\Omega))} 
		\\
		\le
		L_\beta \paren[big](.{\norm{\widetilde \rho_1-\widetilde\rho_2}_{L^2(0,T;L^2(\Omega))} + \norm{\nabla (\phi_{\gamma,1} - \phi_{\gamma,2})}_{L^2(0,T;L^2(\Omega))}}
		+ 
		\paren[big].){\norm{\bx_{\gamma,1} - \bx_{\gamma,2}}_{L^\infty(0,T)^M}}
		.
	\end{multline}
	Applying \cref{lem:ex_phi,lem:ODE_reg} to the second and third terms on the right-hand side, respectively, yields
	\begin{equation*}
		\norm{\rho_{\gamma,1}-\rho_{\gamma,2}}_{C([0,T];L^2(\Omega))}
		\le
		C'\,\sqrt{T}\,\norm{\widetilde \rho_1 - \widetilde\rho_2}_{C([0,T];L^2(\Omega))}
		,
	\end{equation*}
	where now, due to the Gronwall argument in \cref{lem:ODE_reg}, $C'$ depends on $T$ but is monotonically decreasing. 
	Thus, we can again find $T$ small enough so that $S$ is a contraction and Banach's fixed point theorem asserts the existence of a unique solution.
	The box constraints $0 \le \rho_\gamma \le 1$ \ale in $Q_T$ follow by applying Lem.~3 in \cite{EggerPietschmannSchlottbom:2015:1}, \ie, by testing with smoothed versions of the positive part of $\rho-1$ and $-\rho$, respectively. In view of these uniform estimates a standard continuation argument yields existence for arbitrary $T>0$.

	So far we have shown
	\begin{equation*}
		\rho_\gamma \in L^2(0,T;H^1(\Omega))\cap H^1(0,T;H^1(\Omega)^*) \cap L^\infty(Q_T)
		,
	\end{equation*}
	since the fixed-point satisfies the weak formulation \eqref{eq:weak_form_rho_equation_reg}. 
	Note in particular that $\rho_\gamma$ is bounded in $L^2(0,T;H^1(\Omega))\cap H^1(0,T;H^1(\Omega)^*) \cap L^\infty(Q_T)$ by a constant independent of $\gamma$. 
	Furthermore, we denote the corresponding potential and agent trajectories by
	\begin{equation*}
		\phi_\gamma\in L^\infty(0,T;W^{2,p}(\Omega))\cap H^1(0,T;H^1(\Omega))
		,
		\quad 
		\bx_\gamma\in W^{1,\infty}(0,T)^M
		.
	\end{equation*}

	\underline{Step 2: Additional regularity:}
	To shorten the notation we write the nonlinear term in the form
	\begin{equation*}
		\rho_\gamma\,\beta(\rho_\gamma,\phi_\gamma,\bx_\gamma) 
		= 
		g(\rho_\gamma)\,h(\Phi_\gamma)
	\end{equation*}
	with $g(\rho_\gamma) \coloneqq \rho_\gamma\,f(\rho_\gamma)$ and $\Phi_\gamma \coloneqq \nabla(\phi_\gamma+\phi_K(\bx_\gamma;\cdot))$. 
	From the product rule we obtain the following representation for the divergence,
	\begin{equation*}
		\nabla\cdot\paren[auto](){\rho_\gamma\,\beta(\rho_\gamma,\phi_\gamma,\bx_\gamma)} 
		=
		g'(\rho_\gamma)\,\nabla\rho_\gamma \cdot h(\Phi_\gamma) + \rho_\gamma\,f(\rho_\gamma) \, \nabla \cdot h(\Phi_\gamma)
		.
	\end{equation*}
	Freezing the nonlinear terms allows us to understand \eqref{eq:rho_lin} as a linear equation of the form
	\begin{equation}
		\label{eq:rho_consider_linear_full}
		\begin{aligned}
			\partial_t \rho_\gamma - \eps\,\laplace \rho_\gamma + b(t,x)\cdot \nabla \rho_\gamma + c(t,x)\,\rho_\gamma
			&
			=
			0 
			&
			& 
			\text{in } Q_T 
			,
			\\
			\eps \nabla \rho_\gamma \cdot n 
			&
			= 
			r 
			&
			& 
			\text{on } \Sigma_T
			,
			\\
			\rho(0) 
			&
			= 
			\rho_0  
			&
			&
			\text{in } \Omega
		\end{aligned}
	\end{equation}
	with
	\begin{equation}
		\label{eq:full_def_b_c}
		\begin{aligned}
			b(t,x) 
			&
			\coloneqq 
			g'(\rho_\gamma)\,h(\Phi_\gamma) 
			\in 
			L^p(0,T;L^\infty(\Omega))
			,
			\\
			c(t,x) 
			&
			\coloneqq
			f(\rho_\gamma)\,\nabla\cdot h(\Phi_\gamma) 
			\in 
			L^p(Q_T)
			,
		\end{aligned}
	\end{equation}
	and
	\begin{equation}
		\label{eq:full_def_r}
		r(t,x) 
		\coloneqq
		- g(\rho_\gamma) \, h(\Phi_\gamma) \cdot n
		- \eta \, \rho_\gamma \, \chi_{\partial\Omega_\textup{D}} 
		\in 
		W^{\kappa,\kappa/2}_p(\Sigma_T)
	\end{equation}
	with $\kappa = 1-1/p$.
	It remains to show the regularity claimed for $b$, $c$ and $r$. 
	Again, this follows from the Hölder inequality and the regularity already shown for $\rho_\gamma$, $\phi_\gamma$, $\bx_\gamma$. 
	Together with the product and the chain rule this leads to
	\begin{align}
		\norm{b}_{L^\infty(Q_T)} 
		&
		\le 
		\norm{g'}_{L^\infty(\R)}\,\norm{h}_{L^\infty(\R^2)}\le C_b 
		,
		\label{eq:regularity_b}
		\\
		\norm{c}_{L^p(Q_T)} 
		&
		\le 
		\norm{f}_{W^{1,\infty}(\R)}\,\norm{Dh}_{W^{1,\infty}(\R^2)}
		\paren[auto](){\norm{\phi_\gamma}_{L^\infty(0,T;W^{2,p}(\Omega))} + C_{\phi_K}}
		\le 
		C_c
		.
		\label{eq:regularity_c}
	\end{align}
	Note that $C_b$ and $C_c$ are independent of $\gamma$ as, in particular, $\phi_\gamma$ can be bounded independently of $\rho_\gamma$ and thus of $\gamma$.
	To show the required regularity for $r$ we proceed as follows.
	First, we show the estimates
	\begin{align}\label{eq:bd_term_H1_bound}
		\MoveEqLeft
		\norm[auto]{\partial_{x_i}\paren[auto](){g(\rho_\gamma)\,h(\Phi_\gamma)}}_{L^2(Q_T)}
		\nonumber
		\\
		&
		\le 
		C \, \paren[Big](.{\norm{g}_{L^\infty(\R)}\,\norm{Dh}_{L^\infty(\R^{2\times 2})} \norm{\nabla^2(\phi_\gamma + \phi_K(\bx_\gamma;\cdot))}_{L^2(\Omega_T)}} 
		\nonumber
		\\
		&
		\qquad
		+ \paren[Big].){\norm{g'}_{L^\infty(\R)}\norm{\nabla \rho_\gamma}_{L^2(Q_T)}\norm{h}_{L^\infty(\R^2)}}
		\le 
		C
	\end{align}
	for $i=1,2$, as well as
	\begin{align}
		\label{eq:bd_term_temporal_bound}
		\MoveEqLeft
		\norm[auto]{\partial_t\paren[auto](){g(\rho_\gamma)\,h(\Phi_\gamma)}}_{L^2(Q_T)}
		\nonumber
		\\
		&
		\le 
		C \, \paren[Big](.{\norm{g}_{L^\infty(\R)}\norm{Dh}_{L^\infty(\R^{2\times 2})} \norm{\partial_t \nabla(\phi_\gamma + \phi_K(\bx_\gamma;\cdot))}_{L^2(Q_T)}}
		\nonumber
		\\
		&
		\qquad
		+ \paren[Big].){\norm{g'}_{L^\infty(\R)}\norm{\partial_t \rho_\gamma}_{L^2(Q_T)}\norm{h}_{L^\infty(\R^2)}}
		\le 
		C.
	\end{align}
	This follows from the regularity already shown for $\rho_\gamma$, $\phi_\gamma$ (see \cref{lem:ex_phi}) and $\bx_\gamma$ (see \cref{lem:ODE_reg}). 
	With these considerations we conclude
	\begin{equation*}
		-g(\rho_\gamma)\,h(\Phi_\gamma) - \eta\,\rho_\gamma\,\chi_{\partial\Omega_\textup{D}}\in W^{1,1}_2(Q_T) \embeds W^{1,1/2}_2(Q_T).
	\end{equation*}
	This allows us to apply the trace \cref{lem:trace_theorem}, which provides
	\begin{equation}
		\label{eq:regularity_r}
		\norm{r}_{W^{1/2,1/4}_2(\Sigma_T)} 
		\le 
		C_r
		.
	\end{equation}
	Collecting the properties \eqref{eq:regularity_b}--\eqref{eq:regularity_r} an application of Thm.~2.1 in \cite{DenkHieberPruess:2007:1} and \cref{lem:ex_phi} implies
	\begin{equation*}
		\rho_\gamma\in W^{2,1}_2(Q_T)
		\quad
		\text{and}
		\quad 
		\phi_\gamma\in L^\infty(0,T;H^3(\Omega))\cap H^1(0,T;H^1(\Omega))
		.
	\end{equation*}

	We can even further improve the regularity of $\rho_\gamma$. 
	Analogous to \eqref{eq:bd_term_H1_bound} and \eqref{eq:bd_term_temporal_bound} we show
	\begin{equation*}
		\norm[auto]{\partial_{x_i}\paren[auto](){g(\rho_\gamma)\,h(\Phi_\gamma)}}_{L^p(Q_T)}
		+ \norm[auto]{\partial_t\paren[auto](){\partial_{x_i}g(\rho_\gamma)\,h(\Phi_\gamma)}}_{L^2(0,T;L^p(\Omega)))}
		\le 
		C
	\end{equation*}
	and together with the trace \cref{lem:trace_theorem} we deduce
	\begin{align*}
		r
		&
		\in 
		L^p(0,T;W^{1,p}(\Omega))\cap W^{1,2}(0,T;L^p(\Omega)) \embeds W^{1,1/2}_p(Q_T) \embeds W^{1-1/p,1/2-1/(2p)}_p(\Sigma_T).
	\end{align*}
	This, \eqref{eq:regularity_b} and \eqref{eq:regularity_c} allow a further application of Thm.~2.1 in \cite{DenkHieberPruess:2007:1} with $p \le 4$, from which we infer the desired regularity
	\begin{equation*}
		\rho_\gamma\in W^{2,1}_p(Q_T).
	\end{equation*}

	The functions $\rho_\gamma$, $\phi_\gamma$ and $\bx_\gamma$ are thus a strong solution of the system
	\begin{alignat}{2}\label{eq:rho_reg}
		\partial_t\rho_\gamma - \nabla\cdot\paren[auto](){\rho_\gamma\,\beta(\rho_\gamma,\phi_\gamma,\bx_\gamma)}  
		&
		= 
		\eps\,\laplace \rho_\gamma 
		&
		&
		\quad
		\text{in } Q_T
		,
		\\
		- \delta_1\,\laplace \phi_\gamma + \abs{\nabla \phi_\gamma}^2 
		&
		= 
		\frac{1}{f( \rho_\gamma)^2+\delta_2}
		&
		&
		\quad
		\text{in } Q_T 
		,
		\label{eq:phi_reg}
		\\ 
		\label{eq:ODE_reg_reg}
		\dot x_{i,\gamma} 
		&
		= 
		f\paren[big](){(\overline{\eta_\gamma \ast\rho_\gamma})(\cdot,x_{i,\gamma}(\cdot))} \, u_i
		&
		&
		\quad
		\text{in } [0,T],
		\quad
		i = 1, \ldots, M
	\end{alignat}
	that satisfy the boundary and initial conditions pointwise almost everywhere.

	\underline{Step 3: Limit $\gamma \to 0$}: 
	In order to recover a solution to \eqref{eq:forward_system}, it remains to pass to the limit $\gamma \to 0$ in \eqref{eq:rho_reg}--\eqref{eq:ODE_reg_reg}. As a first step in this direction, note that $b(t,x)$, $c(t,x)$ and $r(t,x)$ defined in \eqref{eq:full_def_b_c} and \eqref{eq:full_def_r} are bounded independently of $\gamma$. 
	Thus, understanding \eqref{eq:rho_reg} as the linear equation \eqref{eq:rho_consider_linear_full} we obtain the following estimate, uniformly in $\gamma$,
	\begin{align*}
		\norm{\rho_\gamma }_{W^{2,1}_p(Q_T)} 
		\le 
		C
		.
	\end{align*}
	This implies the existence of a sequence $\gamma_k$ with $\gamma_k\to 0$ that
	\begin{equation}\label{eq:rho_gamma_limit}
		\rho_{\gamma_k} \weakly \rho \text{ in } W^{2,1}_p(Q_T)
		\quad
		\text{and} 
		\quad
		\rho_{\gamma_k} \to \rho \text{ in } L^p(0,T;W^{1,p}(\Omega)),
	\end{equation}
	where the second convergence is a consequence of the Aubin-Lions lemma, Thm.~II.5.16 in \cite{BoyerFabrie:2013:1}.
	As, moreover, $\phi_\gamma$ is also uniformly bounded in $L^p(0,T;W^{2,p}(\Omega))$ by $C_\phi$, we also have 
	\begin{equation}\label{eq:gamma_limit_phi}
		\begin{aligned}
			D^2 \phi_{\gamma_k}
			&
			\weakly D^2 \phi 
			&
			&
			\text{in } L^p(0,T;L^p(\Omega;\R^{d\times d}))
			,
			\\
			\phi_{\gamma_k} 
			&
			\to \phi 
			&
			&
			\text{in } L^p(0,T;W^{1,p}(\Omega))
			,
		\end{aligned}
	\end{equation}
	where the second convergence follows from \cref{lem:phi_gamma_compact}. We omit the index $k$ in the following to shorten the notation.
	Passing to the limit $\gamma \to 0$, in the sense of distributions,
	in equation \eqref{eq:forward_system2} yields the validity of this equation also for the limit values
	$\phi$ and $\rho$.
	As for \eqref{eq:forward_system3}, denote by $x_i(t)$ the solution of $\dot x_i(t) = f\paren[auto](){\overline\rho(x_i(t),t)}\,u_i(t)$, $t \in [0,T]$, and $x_i(0) = x_{i,0}$, with $\rho$ denoting the limit from \eqref{eq:rho_gamma_limit}. 
	Arguing similarly as in \cref{lem:ODE_reg} and using $\abs{u_i(t)} \le 1$ for \aa $t\in(0,T)$ we obtain
	\begin{align*}
		\MoveEqLeft
		\abs{x_i(t) - x_{i,\gamma}(t)} 
		\\
		&
		\le 
		\int_0^t \abs[auto]{f\paren[auto](){\overline\rho(s,x_i(s))} - f\paren[auto](){(\overline{\eta_\gamma \ast \rho_\gamma})(s,x_{i,\gamma}(s))}} \ds 
		\\
		&
		\le 
		\int_0^t \abs[auto]{f\paren[auto](){\overline\rho(s,x_i(s))} - f\paren[auto](){\overline\rho(s,x_{i,\gamma}(s))}} \ds 
		\\
		&
		\quad
		+ \int_0^t \abs[auto]{f\paren[auto](){\overline\rho (s,x_{i,\gamma}(s))} - f\paren[auto](){(\overline{\eta_\gamma \ast \rho})(s,x_{i,\gamma}(s))}} \ds
		\\
		&
		\quad
		+ \int_0^t \abs[auto]{f\paren[auto](){(\overline{\eta_\gamma \ast \rho})(s,x_{i,\gamma}(s))} - f\paren[auto](){(\overline{\eta_\gamma \ast \rho_\gamma})(s,x_{i,\gamma}(s))}} \ds
		\\
		&
		\le   
		\int_0^t L_{f,\rho,\E}(s)\,\abs{x_i(s)-x_{i,\gamma}(s)}\ds 
		\\
		&
		\quad
		+  L_f\paren[auto](){t\,\norm{\rho - \eta_\gamma \ast \rho}_{C([0,T]\times\closure{\Omega})} + \sqrt{t}\,\norm{\eta_\gamma}_{L^1(\Omega)}\,\norm{\rho - \rho_\gamma}_{L^2(0,T;L^\infty(\Omega))}}
		.
	\end{align*}
	As $\norm{\eta_\gamma}_{L^1(\Omega)}=1$, another application of Gronwall's inequality yields
	\begin{align*}
		\MoveEqLeft
		\abs{x_i(t) - x_{i,\gamma}(t)}
		\\
		&
		\le 
		C \, L_f \paren[auto](){t\,\norm{\rho - \eta_\gamma \ast \rho}_{C([0,T]\times\closure{\Omega})} + \sqrt{t}\,\norm{\rho - \rho_\gamma}_{L^2(0,T;L^\infty(\Omega))}} \exp\paren[auto](){\int_0^t L_{f,\rho,\E}(s) \ds},
	\end{align*}
	and due to \eqref{eq:convergence_mollification}, \eqref{eq:rho_gamma_limit} we see that as $\gamma \to 0$, the right-hand side converges to $0$ and thus 
	\begin{equation}\label{eq:x_gamma_limit}
		x_{i,\gamma}(t) \to x_i(t)\;\text{ for every } t \in (0,T), \quad i = 1, \ldots, M.
	\end{equation}
	Next, we pass to the limit in \eqref{eq:rho_reg}. 
	The convergence of the linear terms is a direct consequence of \eqref{eq:rho_gamma_limit}. 
	For the convection term we note that it can be written as
	\begin{align*}
		\nabla \cdot \paren[auto](){\rho_\gamma\,\beta (\rho_\gamma, \phi_\gamma, \bx_\gamma)}
		&
		= 
		\nabla \cdot \paren[auto](){g(\rho_\gamma)\, h(\Phi_\gamma)} 
		\\
		&
		= 
		g'(\rho_\gamma)\,\nabla\rho_\gamma \cdot h(\Phi_\gamma) + g(\rho_\gamma)\, Dh(\Phi_\gamma) \dprod \nabla^2(\phi_\gamma+ \phi_K(\bx_\gamma;\cdot))
		.
	\end{align*}
	As both $\rho_\gamma$ and $h$ are uniformly bounded, the convergences \eqref{eq:rho_gamma_limit}, \eqref{eq:gamma_limit_phi} and \eqref{eq:x_gamma_limit} imply
	\begin{equation*}
		\nabla \cdot (\rho_\gamma\,\beta (\rho_\gamma, \phi_\gamma, \bx_\gamma)) 
		\weakly 
		\nabla \cdot (\rho\,\beta (\rho, \phi, \bx))\quad\text{ in } L^p(Q_T)
		.
	\end{equation*}
	It remains to pass to the limit in the boundary conditions. 
	The trace \cref{lem:trace_theorem} implies that $\eps\,\nabla \rho_\gamma \cdot n$ is bounded in $L^p(\Sigma_T)$ and thus converges weakly. 
	The compact embedding $W^{2-1/p,1-1/(2p)}_p(\Sigma_T) \embeds L^p(\Sigma_T)$ implies the convergences 
	\begin{equation*}
		\restr{\rho}{\Sigma_T} \to \restr{\rho}{\Sigma_T}
		\quad
		\text{and}
		\quad
		\nabla\restr{\phi_\gamma}{\Sigma_T} \to \restr{\nabla\phi}{\Sigma_T}\;\text{ in } L^p(\Sigma_T)
		,
	\end{equation*}
	so that using the uniform boundedness of both $\rho_\gamma$ and $h$ as well as \eqref{eq:x_gamma_limit} we conclude 
	\begin{equation*}
		\eps\,\nabla\rho_\gamma\cdot n + \rho_\gamma\,\beta(\rho_\gamma,\phi_\gamma,\bx_\gamma)\cdot n
		\weakly 
		\eps\,\nabla \rho\cdot n + \rho\,\beta(\rho,\phi,\bx)\cdot n\;\text{ in } L^p(\Sigma_T)
		.
	\end{equation*}
	Thus, the weak limit of $\rho_\gamma$, namely $\rho$, is the strong solution of \eqref{eq:forward_system} with boundary conditions \eqref{eq:forward_bc}. 
	This completes the proof.
\end{proof}

The previous theorem allows us to introduce the solution operator
\begin{equation*}
	S \colon \cU \to \cY
	,
	\quad \bu \mapsto S(\bu) \coloneqq (\rho,\phi,\bx)
\end{equation*}
of \eqref{eq:forward_system} with boundary conditions \eqref{eq:forward_bc} and initial conditions $\bx(0) = \bx^0$ and $\rho(\cdot,0) = \rho_0$.
The control and state spaces are
\begin{align*}
	\cU 
	&
	\coloneqq
	L^\infty(0,T;\R^2)^M
	,
	\\
	\cY 
	&
	\coloneqq 
	W^{2,1}_p(Q)\times \paren[auto](){L^\infty(0,T;W_\text{ND}^{2,p}(\Omega))\cap W^{1,p}(0,T;W^{1,p}(\Omega))}\times W^{1,s}(0,T;\R^2)^M
\end{align*}
with $1/s=1/2+1/p$.
Later, the operator~$S$ is referred to as \emph{control-to-state operator}.

We conclude this section with an auxiliary result required later to show the existence of global solutions to an optimal control problem.
\begin{lemma}
	\label{lem:weak_continuity}
	The operator $S \colon \cU \to \cY$ is weakly sequentially continuous.
\end{lemma}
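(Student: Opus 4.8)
The plan is to re-run the compactness argument of Step~3 in the proof of \cref{thm:ex_state}, with the vanishing-regularization limit $\gamma\to0$ replaced by the limit along the control sequence. Let $\bu_k\weakly\bu$ in $\cU$. Since a weakly convergent sequence in $L^\infty(0,T;\R^2)^M$ converges also weakly-$*$, it suffices to treat the weak-$*$ case, and after discarding finitely many indices I may assume every $\bu_k$ satisfies \cref{assumption:C1}; the limit then inherits $\norm{u_i}_{L^\infty(0,T)}\le1$ by weak-$*$ lower semicontinuity of the norm. Set $(\rho_k,\phi_k,\bx_k)\coloneqq S(\bu_k)$. Because the a~priori estimate of \cref{thm:ex_state} is uniform in the control, $(\rho_k)$ is bounded in $W^{2,1}_p(Q_T)$, and, using \cref{lem:ex_phi} together with that bound, $(\phi_k)$ is bounded in $L^\infty(0,T;W^{2,p}_\text{ND}(\Omega))\cap W^{1,p}(0,T;W^{1,p}(\Omega))$; moreover, since $\abs{u_{i,k}}\le1$ and $\norm{f}_{L^\infty(\R)}\le C_f$, the sequence $\dot\bx_k$ is bounded in $L^\infty$, so $(\bx_k)$ is bounded in $W^{1,\infty}(0,T;\R^2)^M$. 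As the forward solution is unique, it is enough---by the usual subsequence argument---to show that an arbitrary subsequence admits a further subsequence (not relabelled) along which $S(\bu_k)$ converges to $S(\bu)$, weakly, resp.\ weakly-$*$, in the factors of $\cY$.

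\textbf{Compactness.} I would first extract $\rho_k\weakly\rho$ in $W^{2,1}_p(Q_T)$; since $W^{2,1}_p(Q_T)\embeds C([0,T];W^{1,p}(\Omega))$ and, for $p>2$, the embedding $W^{1,p}(\Omega)\embeds C(\closure{\Omega})$ is compact, the Aubin--Lions--Simon lemma provides, after a further extraction, $\rho_k\to\rho$ strongly in $C(\closure{Q_T})$ and in $L^p(0,T;W^{1,p}(\Omega))$. The stability estimate \eqref{eq:phi_stability} then shows that $(\phi_k)$ is Cauchy in $C([0,T];W^{2,2}(\Omega))$, whence $\phi_k\to\phi$ strongly there; together with the uniform bounds this yields $\phi_k\to\phi$ weakly-$*$ in $L^\infty(0,T;W^{2,p}_\text{ND}(\Omega))$ and weakly in $W^{1,p}(0,T;W^{1,p}(\Omega))$, and, using $W^{2,2}(\Omega)\embeds W^{1,q}(\Omega)$ in two dimensions, $\nabla\phi_k\to\nabla\phi$ in $C([0,T];L^q(\Omega))$ for every $q<\infty$. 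Finally, by the Arzel\`{a}--Ascoli theorem, $\bx_k\to\bx$ in $C([0,T];\R^2)^M$ with $\dot\bx_k\to\dot\bx$ weakly-$*$ in $L^\infty$, hence $\bx_k\weakly\bx$ in $W^{1,s}(0,T;\R^2)^M$.

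\textbf{Passing to the limit.} It then remains to verify that the limit triple $(\rho,\phi,\bx)$ solves \eqref{eq:forward_system} with the data \eqref{eq:hreg_initial}, \eqref{eq:forward_bc}, \eqref{eq:ODE_init} for the control $\bu$; uniqueness (\cref{thm:ex_state}) then forces $(\rho,\phi,\bx)=S(\bu)$, and since this limit does not depend on the chosen subsequence, the proof concludes. In \eqref{eq:forward_system2} one has $1/(f(\rho_k)^2+\delta_2)\to1/(f(\rho)^2+\delta_2)$ in $C(\closure{Q_T})$ (as $\rho_k\to\rho$ uniformly and $f(\rho_k)^2+\delta_2\ge\delta_2>0$), $\abs{\nabla\phi_k}^2\to\abs{\nabla\phi}^2$ in $L^2(Q_T)$, and $\laplace\phi_k\weakly\laplace\phi$ in $L^p(Q_T)$, so the weak form passes to the limit (the traces converging as well) and $\phi$ solves the regularized Eikonal equation with its boundary conditions. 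For \eqref{eq:forward_system1} I would write the flux as $g(\rho_k)\,h(\Phi_k)$ with $g(s)=s\,f(s)$ and $\Phi_k=\nabla(\phi_k+\phi_K(\bx_k;\cdot))$; then $g(\rho_k)\to g(\rho)$ in $C(\closure{Q_T})$, and since $K\in W^{3,\infty}(\R^2)$ and $\bx_k\to\bx$ uniformly, $\Phi_k\to\nabla(\phi+\phi_K(\bx;\cdot))$ in $L^q(Q_T)$ for all $q<\infty$, whence, $h$ being bounded and Lipschitz, $h(\Phi_k)$ and therefore $g(\rho_k)\,h(\Phi_k)$ converge in $L^q(Q_T)$; combined with $\partial_t\rho_k\weakly\partial_t\rho$, $\nabla\rho_k\to\nabla\rho$ in $L^p(Q_T)$, uniform convergence of the boundary traces, and $\rho(0)=\rho_0$ (from $\rho_k(0)=\rho_0$ and $\rho_k\to\rho$ in $C([0,T];W^{1,p}(\Omega))$), the weak formulation of \eqref{eq:forward_system1} passes to the limit. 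For the ODE \eqref{eq:forward_system3}, integration gives $x_{i,k}(t)=x_{i,0}+\int_0^t f(\overline\rho_k(s,x_{i,k}(s)))\,u_{i,k}(s)\,\ds$; splitting $f(\overline\rho_k(s,x_{i,k}(s)))-f(\overline\rho(s,x_i(s)))$ into a part bounded by $L_f\,C_{\E,\infty}\,\norm{\rho_k-\rho}_{C(\closure{Q_T})}$ and a part bounded by $C\,\norm{\rho(s)}_{W^{2,p}(\Omega)}\,\norm{\bx_k-\bx}_{C([0,T])^M}$ --- where $\norm{\rho(\cdot)}_{W^{2,p}(\Omega)}\in L^p(0,T)$ controls the spatial Lipschitz constant of $\overline\rho(s,\cdot)$ via \cref{lem:extension} --- one obtains $a_{i,k}\coloneqq f(\overline\rho_k(\cdot,x_{i,k}(\cdot)))\to f(\overline\rho(\cdot,x_i(\cdot)))$ strongly in $L^q(0,T)$ for every $q<\infty$. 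Since $u_{i,k}\to u_i$ weakly-$*$ in $L^\infty(0,T)$ and $\norm{u_{i,k}}_{L^\infty(0,T)}\le1$, the product $a_{i,k}\,u_{i,k}$ converges, so letting $k\to\infty$ yields $x_i(t)=x_{i,0}+\int_0^t f(\overline\rho(s,x_i(s)))\,u_i(s)\,\ds$; that is, $\bx$ solves \eqref{eq:forward_system3} in the sense of \cref{def:ODE_extended}.

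\textbf{Main obstacle.} The genuine work is to secure enough \emph{strong} compactness of the state components so that the three nonlinearities --- $\abs{\nabla\phi}^2$ in the Eikonal equation, $g(\rho)\,h(\nabla(\phi+\phi_K))$ in the transport term, and $f(\overline\rho(\cdot,x(\cdot)))$ in the agent ODE --- survive the limit; and, because the control enters \eqref{eq:forward_system3} multiplicatively and converges only weakly-$*$, to pair the strongly convergent coefficient $a_{i,k}$ with the weak-$*$ limit of $u_{i,k}$ in the integrated ODE. No new a~priori estimate is required: the uniform bounds of \cref{thm:ex_state}, the Aubin--Lions--Simon lemma, the trace \cref{lem:trace_theorem}, and the stability estimates \eqref{eq:phi_stability} and \eqref{eq:ODE_stab} are all that is needed, and the argument is essentially a transcription of Step~3 of the proof of \cref{thm:ex_state} with $k\to\infty$ in place of $\gamma\to0$.
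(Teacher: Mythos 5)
Your proposal is correct and follows essentially the same route as the paper, whose proof simply invokes the arguments of Step~3 in the proof of \cref{thm:ex_state} (uniform bounds, Aubin--Lions compactness, passage to the limit in the three equations) combined with uniqueness of the forward solution and the standard subsequence argument; your only genuine addition is to make explicit the weak-$*$/strong pairing of $u_{i,k}$ with $f(\overline\rho_k(\cdot,x_{i,k}(\cdot)))$ in the integrated ODE, which the paper leaves implicit. (One small inaccuracy: weak convergence in $\cU$ does not let you assume that all but finitely many $\bu_k$ satisfy $\norm{u_{i,k}}_{L^\infty}\le 1$; but uniform boundedness of the weakly convergent sequence is all your argument actually uses, and in the intended application the controls lie in $\Uad$ anyway.)
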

\begin{proof}
	Given a weakly convergent sequence $(u_n)_{n\in\N} \subset \cU$ with $u_n\weakly u$ one has to show that the corresponding states $(\rho_n,\phi_n,\bx_n) = S(u_n)$ converge weakly in $\cY$ to $(\rho,\phi,\bx) = S(u)$.
	This follows from the same arguments as those in step~3 of the proof of \cref{thm:ex_state}, together with the uniqueness of solutions.
\end{proof}

\section{The Linearized System}
\label{sec:linearized_system}

In order to prove necessary optimality conditions for the optimal control problems introduced later, we investigate the differentiability of the control-to-state operator~$S$.
The desired results follow from the implicit function theorem applied to the equation $e(\by,\bu) = 0$ with $\by = S(\bu) = (\rho,\phi,\bx)$. 
Here, $e$ corresponds to the strong formulation of our forward system \eqref{eq:forward_system1}--\eqref{eq:forward_system3}, more precisely, there holds
\begin{equation*}
	e \colon \cY \times \cU \to \cZ
\end{equation*}
with
\begin{multline*}
	\cZ 
	= 
	L^p(Q_T)\times W^{1-1/p,1/2-1/(2p)}_p(\Sigma_\textup{W}) \times W^{2(1-1/p),p}(\Omega) 
	\\
	\times \paren[auto](){L^\infty(0,T;L^p(\Omega))\cap W^{1,p}(0,T;W^{1,p}(\Omega)^*)}\times W^{1,s}(0,T;\R^2)^M
\end{multline*}
defined by
\begin{align}
	\label{eq:definition_functional_e}
	e_1(\by,\bu) 
	&
	\coloneqq
	\partial_t\rho - \varepsilon\,\laplace \rho - \nabla\cdot 
	(\rho\,\beta(\by)) 
	\nonumber
	\\
	e_2(\by,\bu) 
	&
	\coloneqq
	(\varepsilon \nabla \rho + \rho\,\beta(\by))\cdot n + \chi_{\partial\Omega_\textup{D}}\,\eta\,\rho
	\nonumber
	\\
	e_3(\by,\bu) 
	&
	\coloneqq
	\rho(0)-\rho_0 
	\\
	e_4(\by,\bu) 
	&
	\coloneqq
	-\delta_1\,\laplace \phi + \abs{\nabla \phi}^2 - \frac{1}{f(\rho)^2+\delta_2} 
	\nonumber
	\\
	e_{5,i}(\by,\bu)(t) 
	&
	\coloneqq 
	x_i(t) - x_{i,0} - \int_0^t f(\overline\rho(s,x_i(s)))\,u_i(s) \ds
	,
	\quad 
	i=1,\ldots,M
	.
	\nonumber
\end{align}
Notice that from here on we write $\beta(\by)$ in place of $\beta(\rho,\phi,\bx)$.
Recall that  $p\in(2,4)$ is a fixed number and the integrability index $s$ of the space of agent trajectories is chosen such that $\frac1s = \frac12 + \frac1p$.

\begin{lemma}
	\label{lem:important_bounds}
	Let $\by = (\rho,\phi,\bx) \in \cY$ and define $\Phi := \nabla(\phi+\phi_K(\bx;\cdot))$. 
	Then a constant $C_h>0$ exists such that
	\begin{align*}
		\norm{h(\Phi)}_{L^\infty(Q_T)} + \norm{h(\Phi)}_{L^\infty(0,T;W^{1,p}(\Omega))} +\norm{\partial_t h(\Phi)}_{L^p(Q_T)} 
		&
		\le 
		C_h 
		\\
		\norm{D^kh(\Phi)}_{L^\infty(Q_T)} + \norm{D^kh(\Phi)}_{L^\infty(0,T;W^{1,p}(\Omega))} 
		&
		\le 
		C_h,
		\quad 
		k=1,2
		.
	\end{align*}
	Note that
	\begin{equation*}
		Dh(\by) 
		= 
		\chi_{\{\abs{\by} \le 1\},\varepsilon} \, \id
	\end{equation*}
	is a smoothed characteristic function.	
	Moreover, the functions $h$ and $D^k h$, $k=1,2$, are Lipschitz continuous.
	In particular, for any two vector fields $\Phi_1, \Phi_2 \in L^\infty(0,T;W^{1,p}(\Omega))$ there holds
	\begin{align}
		\MoveEqLeft
		\norm{\nabla\cdot (Dh(\Phi_1)-Dh(\Phi_2))}_{L^\infty(0,T;L^p(Q_T))}
		\nonumber
		\\
		&
		\le 
		C_h
		\paren[auto](){1+\norm{D\Phi_1}_{L^\infty(Q_T)}}
		\norm{D\Phi_1-D\Phi_2}_{L^\infty(0,T;L^p(\Omega))}
		.
		\label{eq:divergence_Dh_Lipschitz}
	\end{align}
\end{lemma}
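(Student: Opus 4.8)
The argument rests on two structural observations, after which every assertion reduces to the chain rule and Hölder's inequality. The first concerns $h$ alone: from its explicit definition, $h\colon\R^2\to\R^2$ is $C^\infty$ with $h,Dh,D^2h,D^3h$ bounded on $\R^2$, that is $h\in W^{3,\infty}(\R^2;\R^2)$; the displayed form $Dh(\by)=\chi_{\{\abs{\by}\le 1\},\varepsilon}\,\id$ is obtained by differentiating this expression, with $\chi_{\{\abs{\by}\le1\},\varepsilon}$ the smoothed cutoff induced by $\minepsilon$. Since a $C^1$ function with bounded derivative is globally Lipschitz, $h$, $Dh$ and $D^2h$ are Lipschitz continuous, with constants controlled by $\norm{D^{k+1}h}_{L^\infty(\R^2)}$. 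The second observation is that the $\cY$-regularity of $\by=(\rho,\phi,\bx)$ passes to $\Phi=\nabla(\phi+\phi_K(\bx;\cdot))$: from $\phi\in L^\infty(0,T;W^{2,p}(\Omega))\cap W^{1,p}(0,T;W^{1,p}(\Omega))$ we get $\nabla\phi\in L^\infty(0,T;W^{1,p}(\Omega))$ with $\partial_t\nabla\phi\in L^p(Q_T)$, while $K\in W^{3,\infty}(\R^2)$ yields $\nabla\phi_K(\bx;\cdot)=\sum_i\nabla K(\cdot-x_i)\in L^\infty(0,T;W^{2,\infty}(\Omega))$ with $\partial_t\nabla\phi_K=-\sum_i D^2K(\cdot-x_i)\,\dot x_i$, whose temporal integrability is that of $\dot\bx$. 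Throughout, $C_h$ is understood to depend on $\norm{h}_{W^{3,\infty}(\R^2)}$, $\norm{K}_{W^{3,\infty}(\R^2)}$ and $\norm{\by}_{\cY}$.

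Granted these, the $L^\infty(Q_T)$ bounds for $h(\Phi)$ and $D^kh(\Phi)$ are immediate from the boundedness of $h,Dh,D^2h$. For the spatial parts I would use $\nabla_x\big(D^kh(\Phi)\big)=D^{k+1}h(\Phi):\nabla_x\Phi$ and estimate in $L^\infty(0,T;L^p(\Omega))$ by $\norm{D^{k+1}h}_{L^\infty}\,\norm{\nabla\Phi}_{L^\infty(0,T;L^p(\Omega))}$, which together with the $L^\infty$ bound gives $h(\Phi),D^kh(\Phi)\in L^\infty(0,T;W^{1,p}(\Omega))$. For the temporal bound, $\partial_t h(\Phi)=Dh(\Phi)\,\partial_t\Phi$ yields $\norm{\partial_t h(\Phi)}_{L^p(Q_T)}\le\norm{Dh}_{L^\infty}\,\norm{\partial_t\Phi}_{L^p(Q_T)}$, and $\partial_t\Phi$ is controlled by $\norm{\phi}_{W^{1,p}(0,T;W^{1,p})}$ and $\norm{\dot\bx}_{L^p(0,T)}$; here one uses the extra regularity $\bx\in W^{1,\infty}(0,T;\R^2)^M$ available for state trajectories from \cref{thm:ex_state}, so that $\dot\bx\in L^p(0,T)$.

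For the final estimate \eqref{eq:divergence_Dh_Lipschitz} I would start from $\nabla\cdot\big(Dh(\Phi_j)\big)=D^2h(\Phi_j):D\Phi_j$ and write
\begin{equation*}
	\nabla\cdot\big(Dh(\Phi_1)-Dh(\Phi_2)\big)
	=
	D^2h(\Phi_1):\big(D\Phi_1-D\Phi_2\big)
	+
	\big(D^2h(\Phi_1)-D^2h(\Phi_2)\big):D\Phi_1 .
\end{equation*}
The first summand is bounded in $L^\infty(0,T;L^p(\Omega))$ by $\norm{D^2h}_{L^\infty}\,\norm{D\Phi_1-D\Phi_2}_{L^\infty(0,T;L^p(\Omega))}$. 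For the second summand, the trivial $L^\infty\!\times\! L^p$ Hölder pairing together with the Lipschitz continuity of $D^2h$ gives, pointwise in time, the bound $C\,\norm{\Phi_1-\Phi_2}_{L^\infty(\Omega)}\,\norm{D\Phi_1}_{L^p(\Omega)}$, and since $p>d=2$ the embedding $W^{1,p}(\Omega)\embeds L^\infty(\Omega)$ controls $\norm{\Phi_1-\Phi_2}_{L^\infty(\Omega)}$ by $\norm{\Phi_1-\Phi_2}_{W^{1,p}(\Omega)}$, while $\norm{D\Phi_1}_{L^p(\Omega)}\le\abs{\Omega}^{1/p}\norm{D\Phi_1}_{L^\infty(\Omega)}$; taking the supremum over $t$ and collecting constants gives \eqref{eq:divergence_Dh_Lipschitz}.

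All computations are elementary; the points needing care are (i) justifying the chain rule for the smooth, boundedly differentiable map $h$ composed with the merely Sobolev-regular fields $\Phi,\Phi_j$ — routine by mollifying the argument and passing to the limit — and (ii) the bookkeeping of spaces and integrability exponents, in particular securing the $L^p(Q_T)$-integrability of $\partial_t\Phi$, where the full-strength regularity $\bx\in W^{1,\infty}$ of the ODE component is used rather than the weaker $W^{1,s}$ built into $\cY$, and where the factor $\norm{D\Phi_1}_{L^\infty(Q_T)}$ in \eqref{eq:divergence_Dh_Lipschitz} is finite only for the more regular fields to which the estimate is applied later. I expect (ii) to be the main obstacle.
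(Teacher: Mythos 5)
The paper offers no real proof here (its argument is the single sentence that the result ``follows directly from the regularity properties of $h$''), so your write-up is essentially the intended argument made explicit: boundedness and global Lipschitz continuity of $h$, $Dh$, $D^2h$ from the explicit formula, the chain rule for $h(\Phi)$ and its derivatives, and H\"older's inequality, with the $\cY$-regularity of $(\phi,\bx)$ and $K\in W^{3,\infty}(\R^2)$ transferred to $\Phi$. Your observation that $\partial_t\Phi\in L^p(Q_T)$ is not guaranteed by $\bx\in W^{1,s}(0,T;\R^2)^M$ alone (since $s<p$), and that one should invoke the $W^{1,\infty}(0,T)$-regularity of the actual agent trajectories from \cref{thm:ex_state} at the linearization point, is a legitimate sharpening of a point the lemma statement glosses over.

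Two caveats. First, your splitting of $\nabla\cdot(Dh(\Phi_1)-Dh(\Phi_2))$ is algebraically off: with $A_j=D^2h(\Phi_j)$ and $B_j=D\Phi_j$ you wrote $A_1\!:\!(B_1-B_2)+(A_1-A_2)\!:\!B_1$, which differs from $A_1\!:\!B_1-A_2\!:\!B_2$ by the extra term $(A_1-A_2)\!:\!(B_1-B_2)$; the correct decomposition matching the stated right-hand side is $A_2\!:\!(B_1-B_2)+(A_1-A_2)\!:\!B_1$, after which your estimates go through verbatim. Second, estimating the Lipschitz term produces $\norm{\Phi_1-\Phi_2}_{L^\infty(\Omega)}\le C\,\norm{\Phi_1-\Phi_2}_{W^{1,p}(\Omega)}$, and this is \emph{not} controlled by the seminorm $\norm{D\Phi_1-D\Phi_2}_{L^p(\Omega)}$ appearing in \eqref{eq:divergence_Dh_Lipschitz} (no Poincar\'e inequality is available; indeed the estimate as printed fails if $\Phi_1-\Phi_2$ is a nonzero constant while $D\Phi_1\neq 0$), so your final ``collecting constants'' step does not literally reach the stated bound. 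This is an imprecision of the lemma itself rather than a defect of your argument: in all later applications (the remainder estimates in \cref{lem:e_differentiable} and the bijectivity proof) the difference $\Phi_1-\Phi_2$ is of the form $s\,\nabla\widetilde\phi$ or an increment of $\nabla^2K$, for which the full $L^\infty(0,T;W^{1,p}(\Omega))$-norm of the difference is available and is what is actually used; the clean fix is to state the right-hand side of \eqref{eq:divergence_Dh_Lipschitz} with $\norm{\Phi_1-\Phi_2}_{L^\infty(0,T;W^{1,p}(\Omega))}$, and your proof then delivers exactly that.
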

\begin{proof} 
	The result follows directly from the regularity properties of $h$.
\end{proof}

To shorten the notation we also introduce the following constants,
\begin{equation}
	\begin{aligned}
		C_\rho 
		&
		\coloneqq
		\norm{\rho}_{W^{2,1}_p(Q_T)}
		,
		&
		C_\phi 
		&
		\coloneqq 
		\norm{\phi}_{L^\infty(0,T;W^{2,p}(\Omega))} + \norm{\phi}_{W^{1,p}(0,T;W^{1,p}(\Omega))}
		,
		&
		\breakeqn{1}
		C_\bx 
		&
		\coloneqq
		\norm{\bx }_{W^{1,s}(0,T)}
		,
	\end{aligned}
\end{equation}
whose boundedness is guaranteed due to assumptions \cref{assumption:A4}, \cref{assumption:K1}, \cref{assumption:K2} and \cref{assumption:C1}. 

We confirm the assumptions of the implicit function theorem in the following lemmas.
\begin{lemma}
	\label{lem:e_differentiable}
	The operator $e \colon \cY \times \cU \to \cZ$ is continuously Fréchet differentiable.
\end{lemma}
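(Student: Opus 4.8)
The plan is to verify Fréchet differentiability component by component, since $e = (e_1,\dots,e_{5,M})$ maps into a product space and each component involves only the basic nonlinearities $f$, $h$, $g(\rho)=\rho f(\rho)$ and the kernel $\phi_K$, all of which are $W^{3,\infty}$ by assumptions \cref{assumption:A4}, \cref{assumption:K2}. The key structural observation is that $e$ is a sum of bounded \emph{linear} operators (the terms $\partial_t\rho$, $\varepsilon\laplace\rho$, $-\delta_1\laplace\phi$, the trace $\varepsilon\nabla\rho\cdot n$, the evaluations $\rho(0)$, $x_i(t)-x_{i,0}$) and \emph{Nemytskii-type} superposition terms ($\nabla\cdot(\rho\,\beta(\by))$, $\rho\,\beta(\by)\cdot n$, $\abs{\nabla\phi}^2$, $1/(f(\rho)^2+\delta_2)$, and the integral term $\int_0^t f(\overline\rho(s,x_i(s)))u_i(s)\ds$). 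The linear parts are trivially $C^1$; the entire difficulty is in the nonlinear superposition terms, for which I will (a) write down the candidate derivative by formally differentiating, (b) show it is a bounded linear operator $\cY\times\cU\to\cZ$, (c) estimate the Taylor remainder, and (d) check continuity of $\by\mapsto De(\by,\bu)$ in operator norm.

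\textbf{Key steps.} First I would treat $e_4$: the map $\phi\mapsto\abs{\nabla\phi}^2$ has derivative $\psi\mapsto 2\nabla\phi\cdot\nabla\psi$, which is bounded $L^\infty(0,T;W^{2,p}(\Omega))\to L^p(Q_T)$ because $W^{1,p}(\Omega)$ is a Banach algebra in dimension two for $p>2$; the remainder is $\abs{\nabla\psi}^2$, quadratic and hence $o(\norm\psi)$ in the same norms. The term $\rho\mapsto 1/(f(\rho)^2+\delta_2)$ is a smooth Nemytskii operator (the denominator is bounded away from zero by $\delta_2>0$, and $f\in W^{3,\infty}$), so differentiability in $L^p$ follows from the classical superposition-operator results since the derivative is again bounded on $L^\infty\supset W^{2,1}_p(Q_T)$. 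Next, for $e_1$ and $e_2$ I would use the decomposition $\rho\,\beta(\by)=g(\rho)\,h(\Phi)$ with $\Phi=\nabla(\phi+\phi_K(\bx;\cdot))$ already exploited in the proof of \cref{thm:ex_state}; the derivative splits via the product and chain rules into $g'(\rho)\,\delta\rho\;h(\Phi)+g(\rho)\,Dh(\Phi)\,\delta\Phi$, and the bounds needed to see this lands in $L^p(Q_T)$ after taking the divergence (respectively in the trace space after restricting to $\Sigma_\textup{W}$) are exactly those collected in \cref{lem:important_bounds} together with the embedding $W^{2,1}_p(Q_T)\hookrightarrow C(\closure{Q_T})$. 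The dependence of $\phi_K(\bx;\cdot)$ on $\bx$ is differentiated using $K\in W^{3,\infty}(\R^2)$. Finally, for $e_{5,i}$ I would differentiate under the integral sign: the derivative in direction $(\delta\rho,\delta\bx,\delta u_i)$ is
\begin{equation*}
	-\int_0^t\Bigl(f'(\overline\rho(s,x_i(s)))\bigl(\overline{\delta\rho}(s,x_i(s))+\nabla\overline\rho(s,x_i(s))\cdot\delta x_i(s)\bigr)u_i(s)+f(\overline\rho(s,x_i(s)))\,\delta u_i(s)\Bigr)\ds,
\end{equation*}
which is well-defined because $\overline\rho=\E\rho\in C^{1,\alpha}(\R^2)$ by \cref{lem:extension} and the embedding into $W^{2,1}_p(Q_T)$, and bounded into $W^{1,s}(0,T;\R^2)$ by Hölder with $1/s=1/2+1/p$; the remainder is controlled by the Lipschitz continuity of $f'$ and of $\nabla\overline\rho$.

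\textbf{Continuity of the derivative and the main obstacle.} Having identified $De$, continuity $\by\mapsto De(\by,\bu)$ follows for each term from Lipschitz continuity of $f,f',h,Dh,D^2h$ (cf.\ \cref{lem:important_bounds}) combined with the relevant product estimates; the estimate \eqref{eq:divergence_Dh_Lipschitz} is tailored precisely for the hardest term $g(\rho)\,Dh(\Phi)\dprod\nabla^2(\phi+\phi_K(\bx;\cdot))$ appearing after differentiating the divergence. The main obstacle I anticipate is the regularity bookkeeping for $e_1$: after applying the divergence to $g(\rho)\,Dh(\Phi)\,\delta\Phi$ one obtains a term $g(\rho)\,D^2h(\Phi)\,\delta\Phi\dprod D\Phi+\dots$ that requires $\nabla^2\phi\in L^p(Q_T)$ and $\nabla(\delta\phi)\in L^\infty$, which is borderline — here one must use $p>2$ and the Sobolev embedding $W^{2,p}(\Omega)\hookrightarrow W^{1,\infty}(\Omega)$ in two dimensions, exactly as in the existence proof, to close the estimate; and similarly one must be careful that $\delta\rho\in W^{2,1}_p(Q_T)$ controls $\nabla(\delta\rho)$ in a space pairing correctly with $h(\Phi)\in L^\infty$. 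The $e_{5,i}$ component is the other delicate point, since differentiating the evaluation $\rho(s,x_i(s))$ requires the extra spatial regularity $\rho\in C^1$, available only because $p>2$; I would make this precise by invoking \eqref{eq:w12q_embedding} and \cref{lem:extension}. Everything else is routine and I would state it as such rather than carrying out the computations.
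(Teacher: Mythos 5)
Your proposal follows essentially the same route as the paper: componentwise verification, splitting off the trivially differentiable linear parts, writing the nonlinear terms as $g(\rho)\,h(\Phi)$ with $\Phi=\nabla(\phi+\phi_K(\bx;\cdot))$, computing the candidate derivatives via product and chain rules, bounding the Taylor remainders using \cref{lem:important_bounds}, the trace theorem for the boundary component, and the extension operator together with $W^{2,1}_p(Q_T)\hookrightarrow C([0,T];W^{1,p}(\Omega))$ for the ODE component. The obstacles you flag (the regularity bookkeeping for the divergence term in $e_1$ and the pointwise evaluation in $e_{5,i}$) are exactly the ones the paper's proof works through explicitly, so the argument is correct in approach.
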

\begin{proof} 	
	We start with equation $e_1$. 
	Again, we write the nonlinear term in the form
	\begin{equation*}
		\rho\,\beta(\by) = g(\rho)\,h(\Phi) 
		\text{ with } 
		\Phi = \nabla(\phi+\phi_K(\bx;\cdot))
		.
	\end{equation*}
	For given $(\rho,\phi, \bx) \in \cY$ and $(\widetilde \rho,\widetilde \phi, \widetilde \bx) \in \cY$, Taylor's formula with integral remainder yields
	\begin{align*}
		\MoveEqLeft
		e_1(\rho+\widetilde\rho,\phi,\bx;\bu) - e_1(\rho,\phi,\bx;\bu) 
		\\
		&
		= 
		\partial_t\widetilde\rho - \eps\laplace \widetilde \rho
		- \nabla\cdot (g'(\rho)\,\widetilde \rho\,h(\Phi))
		- \,\int_0^1 \nabla\cdot \paren[big][]{(g'(\rho+s\,\widetilde\rho) - 
		g'(\rho))\,\widetilde\rho\,h(\Phi)}\ds 
		,
		\\
		\MoveEqLeft
		e_1(\rho,\phi+\widetilde\phi,\bx;\bu) - e_1(\rho,\phi,\bx;\bu) 
		\\
		&
		=
		-\nabla\cdot \paren[big](){g(\rho)\,Dh(\Phi)\,\nabla\widetilde\phi} 
		- \int_0^1 \nabla \cdot \paren[big][]{g(\rho) 
		\paren[big](){Dh(\widetilde \Phi_\phi) - Dh(\Phi)} 
		\nabla\widetilde\phi} \ds 
		,
		\\
		\MoveEqLeft
		e_1(\rho,\phi,\bx+\widetilde\bx;\bu) - e_1(\rho,\phi,\bx;\bu) 
		\\
		&
		= 
		\nabla\cdot
		\paren[auto](){g(\rho)\,Dh(\Phi)\nabla^2K(\cdot-x_i)\,\widetilde x_i} 
		\\
		&
		\quad
		+ \int_0^1 \nabla\cdot\paren[big](){g(\rho) 
		\paren[big][]{Dh(\widetilde\Phi_\bx)\,\nabla^2K(\cdot-x_i-s\,\widetilde 
		x_i) - Dh(\Phi)\,\nabla^2K(\cdot-x_i)} \widetilde x_i} \ds
		,
	\end{align*}
	for $\widetilde\bx = (0,\ldots 0, \widetilde x_i, 0, \ldots 0)^\transp$, $i=1,\ldots,M$.

	In the above equations we used the notation $\widetilde\Phi_\phi = \nabla(\phi + s\,\widetilde\phi + \phi_K(\bx;\cdot))$ and $\widetilde \Phi_\bx = \nabla(\phi+\phi_K(\bx+s\,\widetilde \bx;\cdot))$.
	In the following we derive bounds for the remainder terms (the terms depending nonlinearly on $\widetilde\rho$, $\widetilde\phi$ and $\widetilde\bx$ in the above equations), which we denote by $r_{1,\rho}(\widetilde\rho)$, $r_{1,\phi}(\widetilde\phi)$ and $r_{1,x_i}(\widetilde\bx)$, $i=1,\ldots,M$.

	First, we apply the product rule, the Hölder inequality and the embedding $W^{2,1}_p(Q_T) \embeds L^\infty(Q_T)$, where we denote by $C_\infty$ the maximum of the embedding constant and $1$. 
	These arguments yield
	\begin{align}
		\label{eq:remainder_term_e1_rho}
		\MoveEqLeft
		\norm{r_{1,\rho}(\widetilde\rho)}_{L^p(Q_T)}
		\nonumber
		\\
		&
		\le 
		\int_0^1
		\Big(\paren[auto](){\norm{(g''(\rho+s\,\widetilde\rho) - g''(\rho))}_{L^\infty(Q_T)}\norm{\nabla\rho}_{L^p(Q_T)} + s\,\norm{g''(\rho+s\,\widetilde\rho)}_{L^\infty(Q_T)}\norm{\nabla\widetilde\rho}_{L^p(Q_T)}} 
		\nonumber
		\\
		& 
		\qquad 
		\times \norm{\widetilde\rho}_{L^\infty(Q_T)}\norm{h(\Phi)}_{L^\infty(Q_T)} \nonumber\\
		&
		\quad 
		+ \norm{g'(\rho+s\,\widetilde\rho)-g'(\rho)}_{L^\infty(Q_T)}\nonumber\\
		&
		\qquad
		\times \paren[auto](){\norm{\nabla\widetilde\rho}_{L^p(Q_T)}\,\norm{h(\Phi)}_{L^\infty(Q_T)} + \norm{\widetilde\rho}_{L^\infty(Q_T)} \norm{\nabla\cdot h(\Phi)}_{L^p(Q_T)}}\Big)\ds 
		\nonumber
		\\
		&
		\le 
		C_g\,C_h\paren[auto](){C_\rho\,+3\,C_\infty} \,\norm{\widetilde\rho}_{W^{2,1}_p(Q_T)}^2 
		= 
		\co \paren[big](){\norm{\widetilde\rho}_{W^{2,1}_p(Q_T)}}
		.
	\end{align}
	Second, to show differentiability with respect to $\phi$ we confirm
	\begin{align*}
		r_{1,\phi}(\widetilde\phi)
		&
		= 
		\int_0^1 \Big[g'(\rho)\nabla\rho \paren[big](){Dh(\widetilde\Phi_\phi) - Dh(\Phi)}\nabla\widetilde\phi 
		\\
		&
		\quad
		+ g(\rho) \paren[Big](){s\,\nabla \cdot \paren[big](){Dh(\widetilde\Phi_\phi) - Dh(\Phi)} \cdot \nabla\widetilde\phi + \paren[big](){Dh(\widetilde\Phi_\phi)-Dh(\Phi)} \dprod \nabla^2\widetilde\phi} \Big] \ds
		.
	\end{align*}
	With the Hölder inequality, the Lipschitz properties of $Dh$ and $D^2 h$, in particular \eqref{eq:divergence_Dh_Lipschitz}, and the usual embeddings we can show
	\begin{equation*}
		\norm{r_{1,\phi}(\widetilde\phi)}_{L^p(Q_T)} 
		\le
		C_g \, C_h \, C_\infty \, (C_\rho + 2) \, \norm{\widetilde\phi}^2_{L^\infty(0,T;W^{2,p}(\Omega))} 
		= 
		\co \paren[big](){\norm{\widetilde\phi}_{L^\infty(0,T;W^{2,p}(\Omega))}}
		.
	\end{equation*}
	Third, we derive an estimate for $r_{1,x_i}(\widetilde \bx)$ with $\widetilde \bx = (0,\ldots,0,\widetilde x_i,0,\ldots,0)$.
	We use the notation $\phi_K = \phi_K(\bx;\cdot)$ and $\widetilde\phi_K = \phi_K(\bx+s\widetilde\bx;\cdot)$ as well as $K_i = K(\cdot - x_i)$ and $\widetilde K_i = K(\cdot -x_i-s\,\widetilde x_i)$,
	reformulate the remainder term by applying the product and chain rule and obtain
	\begin{align*}
		\MoveEqLeft
		r_{1,x_i}(\widetilde x_i)
		\breakeqn{2}
		\le 
		\int_0^1 \Big(\abs{g'(\rho)\,\nabla\rho} \, 
		\paren[big](){\abs{Dh(\widetilde\Phi_\bx)}\,\abs{\nabla^2 K_i - \nabla^2 \widetilde K_i} + \abs{Dh(\widetilde\Phi_\bx)- Dh(\Phi)}\,\abs{\nabla^2 K_i}} 
		\\
		&
		\quad 
		+ g(\rho) \Big[\abs{\nabla\cdot Dh(\widetilde \Phi_\bx)}\,
		\abs{\nabla^2\widetilde K_i - \nabla^2 K_i}
		+ \abs{Dh(\widetilde\Phi)}\abs{\nabla^3\widetilde K_i - \nabla^3K_i} 
		\\
		&
		\quad 
		\phantom{g(\rho)\Big[}
		+ \abs{\nabla^3 K_i}\,\abs{Dh(\widetilde\Phi_\bx)-Dh(\Phi)}
		+ \abs{\nabla^2 K_i}\,\abs{\nabla\cdot
		(Dh(\widetilde\Phi_\bx)-Dh(\Phi))}\Big] \Big) \, \abs{\widetilde x_i} \ds
		.
	\end{align*}
	Here, $\abs{\cdot}$ is an arbitrary vector, matrix or tensor norm, depending on the argument. 
	With the Hölder inequality, the regularity of $(\rho,\phi,\bx)$, in particular $\rho\in W^{2,1}_p(Q_T) \embeds L^\infty(0,T;W^{1,p}(\Omega))\cap L^p(0,T;W^{1,\infty}(\Omega))$, and the Lipschitz properties of $h$ and $K$ we deduce
	\begin{equation*}
		\norm{r_{1,x_i}(\widetilde \bx)}_{L^p(Q_T)}
		\le 
		C_g\,(C_\rho+4)\,C_h\,C_{\phi_K}\,2\,(1+C_{\phi_K})\norm{\widetilde x_i}_{L^\infty(0,T;\R^2)}^2
		= 
		\co \paren[big](){\norm{\widetilde x_i}_{L^\infty(0,T;\R^2)}}
		.
	\end{equation*}

	The differentiability of $e_2$ can be shown with similar arguments. 
	The Taylor formula yields
	\begin{align*}
		\MoveEqLeft
		e_2(\rho+\widetilde\rho,\phi,\bx;\bu) - e_2(\rho,\phi,\bx;\bu)
		\\
		&
		= 
		(\varepsilon \nabla \widetilde\rho + \widetilde\rho\,\beta(\by))\cdot n + \chi_{\partial\Omega_\textup{D}}\,\eta\,\widetilde\rho 
		\\
		&
		\quad
		+ g'(\rho)\,\widetilde \rho\,h(\Phi)\cdot n + \int_0^1 (g'(\rho+s\,\widetilde\rho)-g'(\rho))\,\widetilde\rho\,h(\Phi)\cdot n\ds
		,
		\\
		\MoveEqLeft
		e_2(\rho,\phi+\widetilde\phi,\bx;\bu) - e_2(\rho,\phi,\bx;\bu)
		\\
		&
		= 
		g(\rho)\,Dh(\Phi)\,\nabla\widetilde\phi\,\cdot n
		+ \int_0^1 g(\rho)\paren[big](){Dh(\widetilde\Phi_\phi) - Dh(\Phi)}\nabla\widetilde\phi\cdot n \ds 
		,
		\\
		\MoveEqLeft
		e_2(\rho,\phi,\bx+\widetilde \bx;\bu) - e_2(\rho,\phi,\bx;\bu)
		\\
		&
		= 
		-g(\rho)\,Dh(\Phi)\,\nabla^2K(\cdot-x_i)\,\widetilde x_i\cdot n 
		\\
		&
		\quad
		-\int_0^1 g(\rho)\paren[auto](){Dh(\widetilde\Phi_{\bx})\,\nabla^2K(\cdot - x_i-s\,\widetilde x_i) - Dh(\Phi)\,\nabla^2 K(\cdot-x_i)}\,\widetilde x_i\cdot n \ds
		,
	\end{align*}
	and it remains to estimate the remainder terms, \ie, the last terms on the right-hand sides of the previous equations. 
	We abbreviate these terms by $r_{2,\rho}(\widetilde\rho)\cdot n$, $r_{2,\phi}(\widetilde\phi)\cdot n$ and $r_{2,\bx}(\widetilde\bx)\cdot n$, respectively.

	First, we show $\norm{r_{2,\rho}(\widetilde\rho)\cdot n}_{W^{1-1/p,1/2-1/(2p)}_p(\Sigma_T)} = \co \paren[big](){\norm{\widetilde\rho}_{W^{2,1}_p(Q_T)}}$. 
	We wish to apply the trace \cref{lem:trace_theorem}, for which we have to show the required time and space regularity of the extension onto $Q_T$. 
	First, note that there holds
	\begin{equation}\label{eq:r2rho_W1p_bound}
		\norm{r_{2,\rho}(\widetilde\rho)}_{L^p(0,T;W^{1,p}(\Omega))} 
		= 
		\co \paren[big](){\norm{\widetilde\rho}_{W^{2,1}_p(Q_T)}}
		,
	\end{equation}
	which can be concluded from the same arguments as in \eqref{eq:remainder_term_e1_rho}. 
	Moreover, for the time derivative we show
	\begin{align*}
		\partial_t r_{2,\rho}(\widetilde\rho) 
		&
		=
		\int_0^1 \Big( \paren[big][]{(g''(\rho+s\widetilde\rho) - g''(\rho))\,\partial_t\rho + s\,g''(\rho+s\widetilde\rho)\,\partial_t \widetilde\rho} \widetilde\rho\,h(\Phi) 
		\\
		&
		\qquad 
		+ \paren[big](){g'(\rho+s\,\widetilde\rho)- g'(\rho)} \, \paren[big](){\partial_t\widetilde \rho\,h(\Phi) + \widetilde\rho\,\partial_t h(\Phi)} \Big) \ds
	\end{align*}
	and with the usual arguments we obtain for $s^{-1} = 2^{-1}+p^{-1}$
	\begin{equation}
		\label{eq:r2rho_time_deriv_bound}
		\norm{r_{2,\rho}(\widetilde\rho)}_{W^{1,s}(0,T;L^p(\Omega))}
		\le 
		C_\infty\,C_g\,C_h\,(3+C_\rho)\,\norm{\widetilde\rho}_{W^{2,1}_p(Q_T)}^2
		=
		\co \paren[big](){\norm{\widetilde\rho}_{W^{2,1}_p(Q_T)}}
		.
	\end{equation}
	With the estimates \eqref{eq:r2rho_W1p_bound} and \eqref{eq:r2rho_time_deriv_bound}, the embedding
	\begin{equation}\label{eq:embedding_for_tracetheorem}
		L^p(0,T;W^{1,p}(\Omega))\cap W^{1,s}(0,T;L^p(\Omega)) \embeds W^{1,1/2}_p(Q_T)
	\end{equation}
	and the trace \cref{lem:trace_theorem} we deduce
	\begin{equation*}
		\norm{r_{2,\rho}(\widetilde\rho)\cdot n}_{W^{1-1/p,1/2-1/(2p)}_p(\Sigma_T)} 
		= 
		\co \paren[big](){\norm{\widetilde\rho}_{W^{2,1}_p(Q_T)}}
		,
	\end{equation*}
	which implies the differentiability of $e_2$ with respect to $\rho$.

	To show an estimate for $r_{2,\phi}(\widetilde\phi)\cdot n$ we proceed in a similar fashion. 
	With analogous arguments we deduce the estimates
	\begin{align*}
		\norm{\partial_{x_i} r_{2,\phi}(\widetilde \phi)}_{L^p(Q_T)}	
		&
		\le
		C\,\norm{\nabla\widetilde\phi}_{L^\infty(0,T;W^{1,p}(\Omega))}^2
		, 
		\\
		\norm{\partial_t r_{2,\phi}(\widetilde\phi)}_{L^s(0,T;L^p(Q_T))}
		&
		\le
		C\left(C_\rho\,\norm{\nabla\widetilde\phi}_{L^\infty(Q_T)}^2
		+ \norm{\partial_t \nabla\widetilde\phi}_{L^p(Q_T)}\,\norm{\nabla\widetilde\phi}_{L^\infty(Q_T)}\right)
	\end{align*}
	and exploit again \eqref{eq:embedding_for_tracetheorem} to arrive at
	\begin{equation*}
		\norm{r_{2,\phi}(\widetilde\phi)}_{W^{1-1/p,1/2-1/(2p)}_p(\Sigma_T)} 
		= 
		\co \paren[big](){\norm{\widetilde \phi}_{L^\infty(0,T;W^{2,p}(\Omega))} + \norm{\widetilde\phi}_{W^{1,p}(0,T;W^{1,p}(\Omega))}}
		,
	\end{equation*}
	which confirms the differentiability of $e_2$ \wrt $\phi$.

	An analogous procedure is used to deduce an estimate for the remainder term $r_{2,\bx}(\widetilde \bx)\cdot n$. 
	For each direction $\widetilde\bx = (0,\ldots 0, \widetilde x_i, 0, \ldots 0)^\transp$, $i=1,\ldots,M$, a direct calculation taking into account \cref{lem:important_bounds} and the Lipschitz continuity of the derivatives of $K$ and $h$ yields
	\begin{align*}
		\norm{\partial_{x_i} r_{2,\bx}(\widetilde \bx)}_{L^p(Q_T)}
		&
		\le 
		C\,\norm{\widetilde x_i}^2_{L^\infty(0,T;\R^2)} 
		,
		\\
		\norm{\partial_t r_{2,\bx}(\widetilde \bx)}_{L^s(0,T;L^p(\Omega))} 
		&
		\le 
		C\,\norm{\widetilde x_i}_{W^{1,s}(0,T;\R^2)}^2
		.
	\end{align*}
	Using again \eqref{eq:embedding_for_tracetheorem} and the trace \cref{lem:trace_theorem} leads to
	\begin{equation*}
		\norm{r_{2,x_i}(\widetilde \bx)\cdot n}_{W^{1-1/p,1/2-1/(2p)}_p(\Sigma_T)} = \co \paren[big](){\norm{\widetilde x_i}_{W^{1,s}(0,T;\R^2)}}
		,
	\end{equation*}
	which proves the differentiability of $e_2$ \wrt $\bx$.

	The component $e_3$ is trivially differentiable.
	For $e_4$ we show
	\begin{align*}
		\MoveEqLeft
		e_4(\rho+\widetilde\rho,\phi,\bx) - e_4(\rho,\phi,\bx) 
		\\
		&
		= 
		\frac{2 f(\rho)\,f'(\rho)}{(f^2(\rho)+\delta_2)^2} \widetilde \rho
		+ \int_0^1\paren[auto](){\frac{2\,f(\rho+s\widetilde\rho)\,f'(\rho+s\widetilde\rho)}{(f(\rho+s\widetilde\rho)^2+\delta_2)^2}-\frac{2\,f(\rho)\,f'(\rho)}{(f(\rho)^2+\delta_2)^2}}\widetilde\rho \ds, 
		\\
		\MoveEqLeft
		e_4(\rho,\phi+\widetilde\phi,\bx) - e_4(\rho,\phi,\bx) 
		\\
		&
		= 
		\delta_1\laplace \widetilde \phi + 2\nabla\phi^\transp \nabla\widetilde\phi + \abs{\nabla\widetilde \phi}^2 
		.
	\end{align*}
	Again, we denote the remainder terms (the terms which are nonlinear in $\widetilde\rho$ and $\widetilde\phi$) by $r_{4,\rho}(\widetilde\rho)$ and $r_{4,\phi}(\widetilde\phi)$.
	We show
	\begin{align*}
		\norm{r_{4,\phi}(\widetilde\phi)}_{L^\infty(0,T;L^p(\Omega))}
		&
		\le 
		C\norm{\nabla\widetilde\phi}_{L^\infty(Q_T)}^2
		\le 
		C\norm{\widetilde \phi}_{L^\infty(0,T;W^{2,p}(\Omega))}^2
		,
		\\
		\norm{\partial_t r_{4,\phi}(\widetilde\phi)}_{L^p(0,T;W^{1,p}(\Omega)^*)}
		&
		\le 
		C\norm{\nabla\widetilde\phi}_{L^\infty(0,T;W^{1,p}(\Omega))}\,\norm{\partial_t\nabla\widetilde \phi}_{L^p(Q_T)}
		.
	\end{align*}
	and both estimates together imply
	\begin{equation*}
		\norm{r_{4,\phi}(\widetilde\phi)}_{\cZ_4} 
		= 
		\co \paren[big](){\norm{\widetilde\phi}_{\cY_2}}
		.
	\end{equation*}
	With similar arguments we can show the differentiability of $e_4$ with respect to $\rho$.

	Finally we consider $e_5$, which is nonlinear in $\rho$ and $x_i$. 
	We confirm by a simple computation that
	\begin{align*}
		\MoveEqLeft
		e_5(\rho+\widetilde\rho,\phi,\bx;\bu;t) - e_5(\rho,\phi,\bx;\bu;t)
		\\
		&
		= 
		\int_0^t f'(\overline\rho(\tau,x_i(\tau)))\,\overline{\widetilde\rho}(\tau,x_i(\tau))\,u_i(\tau) \d\tau 
		\\
		&
		\quad
		+\int_0^1 \int_0^t \paren[auto](){f'\paren[big](){(\overline{\rho+s\,\widetilde\rho})(\tau,x_i(\tau))} - f'\paren[big](){\overline\rho(\tau,x_i(\tau))}} \overline{\widetilde\rho}(\tau,x_i(\tau))\,u_i(\tau) \d\tau \ds,
		\\
		\MoveEqLeft
		e_5(\rho,\phi,\bx+\widetilde\bx;\bu;t) - e_5(\rho,\phi,\bx;\bu;t) 
		\\
		&
		= 
		\widetilde x_i(t) - \int_0^t f'(\rho(\tau,x_i(\tau)))\,\nabla\rho(\tau,x_i(\tau)) \cdot \widetilde x_i(\tau)\,u_i(\tau) \d\tau
		\\
		&
		\quad
		- \int_0^1 \int_0^t \Big(f'(\overline\rho(\tau,(x_i+s\,\widetilde x_i)(\tau)))\,\nabla\overline\rho(\tau,(x_i+s\,\widetilde x_i)(\tau)) 
		\\
		&
		\qquad
		- f'(\overline\rho(\tau,x_i(\tau)))\,\nabla\overline\rho(\tau,x_i(\tau))\Big) \cdot \widetilde x_i(\tau) \, u_i(\tau) \d\tau \ds
	\end{align*}
	for $\widetilde \bx=(0 \ldots \widetilde x_i\ldots 0)^\transp$, $i=1,\ldots,M$.
	Again, the remainder terms (the terms depending nonlinearly on $\widetilde\rho$ and $\widetilde \bx$ in the above equation) are denoted by $r_{5,\rho}(\widetilde\rho)$ and $r_{5,\bx}(\widetilde\bx)$.

	Using the Lipschitz continuity of $f'$, assumption \cref{assumption:C1} and $\norm{\overline \rho}_{L^\infty(\R^2)} \le \norm{\rho}_{L^\infty(\Omega)}$ (see \cref{lem:extension}), we obtain
	\begin{equation}
		\label{eq:r5rho_estimate}
		\abs{r_{5,\rho}(\widetilde\rho;t)}	
		\le 
		C_f\,C_{\E,\infty}^2\,\norm{\widetilde\rho}_{L^2(0,T;L^\infty(\Omega))}^2
		= 
		\co \paren[big](){\norm{\widetilde\rho}_{W^{2,1}_p(Q_T)}}
		.
	\end{equation}
	With similar arguments and the Hölder inequality with $1/s>2/p$ we deduce for the temporal derivative of $r_{5,\rho}$
	\begin{align*}
		\MoveEqLeft
		\norm{\partial_t r_{5,\rho}(\widetilde\rho)}_{L^s(0,T;\R^2)}
		\\
		&
		\le
		\int_0^1 \norm[Big]{\paren[auto](){f'\paren[big](){(\overline{\rho+s\widetilde\rho})(\cdot,x_i(\cdot))}
		- f'(\overline\rho(\cdot,x_i(\cdot)))} \, \overline{\widetilde\rho}\paren[big](){\cdot,x_i(\cdot)} \, u_i(\cdot)}_{L^s(0,T;\R^2)} \ds 
		\\
		&
		\le 
		C_f\,\norm{\overline{\widetilde\rho}}_{L^p(0,T;L^\infty(\R^2))}^2
		\le 
		C_f\,C_{\E,\infty}^2\,\norm{\widetilde\rho}_{L^p(0,T;L^\infty(\Omega))}^2
		=
		\co \paren[big](){\norm{\widetilde\rho}_{W^{2,1}_p(Q_T)}}
		.
	\end{align*}

	For the remainder term $r_{5,\bx}(\widetilde \bx)$ with $\widetilde\bx = (0,\ldots 0, \widetilde x_i, 0, \ldots 0)^\transp$ we show
	\begin{align}
		\label{eq:r5x_initial_estimate}
		\MoveEqLeft
		\abs{r_{5,\bx}(\widetilde\bx;t)}
		\nonumber
		\\
		&
		\le 
		\norm{\widetilde x_i}_{L^\infty(0,T;\R^2)}
		\nonumber
		\\
		&
		\quad 
		\times \int_0^1\int_0^t \Big(\abs[auto]{f'\paren[big](){\overline\rho(\tau,(x_i+s\,\widetilde x_i)(t))} - f'(\overline\rho(\tau,x_i(t)))} \abs{\nabla\overline\rho(\tau, (x_i+s\,\widetilde x_i)(\tau))} 
		\nonumber
		\\
		&
		\quad \qquad 
		+ \abs{f'(\overline\rho(\tau,x_i(\tau)))}\abs[auto]{\nabla\overline\rho(\tau,(x_i+s\,\widetilde x_i)(\tau)) - \nabla\overline\rho(\tau,x_i(\tau))} \Big) \d\tau \ds
		.
	\end{align}
	Using Lipschitz estimates for $f'$ and the mean value theorem we conclude
	\begin{align}
		\label{eq:Lipschitz_f_prime_rho}
		\abs[auto]{f'(\overline\rho(\tau,(x_i+s\,\widetilde x_i)(\tau))) - f'(\overline\rho(\tau,x_i(\tau)))}
		&
		\le 
		C_f\,\text{Lip}(\overline\rho(\tau,\cdot))\,\abs{\widetilde x_i(\tau)}
		\nonumber
		\\
		&
		\le 
		C_f \, C_\E \, \norm{\rho(\tau,\cdot)}_{W^{2,p}(\Omega)}\,\abs{\widetilde x_i(\tau)},
		\\[0.5\baselineskip]
		\label{eq:Lipschitz_grad_rho}
		\abs[auto]{\nabla\overline\rho(\tau,(x_i+s\,\widetilde x_i)(\tau)) - \nabla\overline\rho(\tau,x_i(\tau))}
		&
		\le
		\norm{\nabla\overline\rho(\tau,\cdot)}_{C^{0,\alpha}(\R^2)}\,\abs{\widetilde x_i(\tau)}^\alpha 
		\nonumber 
		\\
		&
		\le 
		C_\E \, \norm{\rho(\tau,\cdot)}_{W^{2,p}(\Omega)}\,\abs{\widetilde x_i(\tau)}^\alpha.
	\end{align}
	In the above estimates we used $\norm{\overline\rho}_{C^{0,1}(\R^2)} \le \norm{\overline\rho}_{C^{1,\alpha}(\Omega)}$, the continuity of $\E\colon C^{1,\alpha}(\overline\Omega)\to C^{1,\alpha}(\R^2)$, see \cref{lem:extension}, and the embedding $W^{2,p}(\Omega) \embeds C^{1,\alpha}(\overline\Omega)$, which is valid for $\alpha\in (0,1/2]$ due to $p>2$.

	The insertion of \eqref{eq:Lipschitz_f_prime_rho} and \eqref{eq:Lipschitz_grad_rho} into \eqref{eq:r5x_initial_estimate} yields 
	\begin{align*}
		\abs{r_{5,\bx}(\widetilde\bx;t)}
		&
		\le
		C_\E \, C_f \, C_\rho \, \max\paren[big]\{\}{t^{1-2/p}, t^{1-1/p}} \paren[auto](){C_\rho\, \norm{\widetilde x_i}^2_{L^\infty(0,T;\R^2)} + \norm{\widetilde x_i}^{1+\alpha}_{L^\infty(0,T;\R^2)}} 
		\\
		&
		= 
		\co \paren[big](){\norm{\widetilde x_i}_{L^\infty(0,T;\R^2)}}
		.
	\end{align*}
	The above also uses an application of the Hölder inequality in time and $2/p < 1$.

	Finally, the Hölder inequality in time with $1/s = 1/2+1/p$ yields the estimate
	\begin{align*}
		\MoveEqLeft
		\norm{\partial_t r_{5,\bx}(\widetilde \bx)}_{L^s(0,T;\R^2)}
		\\
		&
		\le 
		C_f\Big(\norm{\overline\rho(\cdot,(x_i+s\,\widetilde x_i)(\cdot))
		- \overline\rho(\cdot,x_i(\cdot))}_{L^2(0,T)}\,
		\norm{\nabla\overline\rho(\cdot, (x_i+s\,\widetilde x_i)(\cdot))}_{L^p(0,T)} 
		\\
		&
		\qquad 
		+ \norm{\overline\rho(\tau,x_i(\tau))}_{L^2(0,T)} \norm{\nabla\overline\rho(\cdot,(x_i+s\,\widetilde x_i)(\cdot)) - \nabla\overline\rho(\cdot,x_i(\cdot))}_{L^p(0,T)}\Big) \norm{\widetilde x_i}_{L^\infty(0,T;\R^2)} 
		\\
		&
		= 
		\co \paren[big](){\norm{\widetilde x_i}_{L^\infty(0,T;\R^2)}}
		,
	\end{align*}
	where the last step follows again from \eqref{eq:Lipschitz_f_prime_rho} and \eqref{eq:Lipschitz_grad_rho}. 
	This confirms the partial differentiability of $e_5$.

	Collecting the previous estimates proves the partial Fréchet differentiability of the operator~$e$. 
	Based on the properties of $f$, $h$ and $\phi_K$ using again Lipschitz continuity and boundedness, the operators $\partial_\rho e$, $\partial_\phi e$ and $\partial_{x_i} e$ depend continuously on $(\rho,\phi,\bx)$, which implies the continuous Fréchet differentiability. 
\end{proof}

Next, we show that the operator $\partial_\by e(\by,\bu)$ is invertible.
\begin{lemma}
	The operator $\partial_\by e(\by;\bu)$ is bijective.
	That is, given $\by = (\rho, \phi, \bx)\in \cY$ and $F = (F_1,\ldots,F_5)^\transp\in \cZ$, the system $\partial_\by e(\by,\bu) \, \widetilde \by = F$ given by
	\begin{equation}
		\label{eq:e_y_system}
		\begin{aligned}
			\partial_t \widetilde \rho - \varepsilon\,\laplace \widetilde\rho 
			- \nabla 
			\cdot\paren[auto](){\widetilde\rho\,\beta(\by) + 
			\rho\,\frac{\partial\beta(\by)}{\partial \by}\widetilde \by} 
			&
			= 
			F_1 
			\quad
			\text{in } Q_T
			, 
			\\
			\paren[auto](){\varepsilon\,\nabla \widetilde\rho 
			+ \widetilde\rho\,\beta(\by) 
			+ \rho\,\frac{\partial\beta(\by)}{\partial 
			\by} \widetilde \by} \cdot n + 
			\chi_{\partial\Omega_\textup{D}}\,\eta\,\widetilde\rho
			&
			= 
			F_2 
			\quad
			\text{on } \Sigma_T
			,
			\\
			\widetilde \rho(0) 
			&
			= 
			F_3 
			\quad
			\text{in } \Omega
			,
			\\
			-\delta_1\,\laplace \widetilde\phi + 2 \nabla\phi^\transp \nabla\widetilde\phi + \frac{2 f(\rho)\,f'(\rho)}{(f^2(\rho)+\delta_2)^2} \widetilde \rho 
			&
			= 
			F_4 
			\quad
			\text{in } Q_T
			,
			\\
			\widetilde x_i(t)  - \int_0^t f'(\overline\rho(s,x_i(s)))
			\paren[auto](){\nabla\overline\rho(s,x_i(s))^\transp \widetilde x_i(s) + \overline{\widetilde\rho}(s,x_i(s)) }u_i(s) \ds 
			&
			= 
			F_{5,i}(t)
			,
		\end{aligned}
	\end{equation}
	for $t \in (0,T)$ and $i=1,\ldots,M$, possesses a unique solution 
	$\widetilde \by = (\widetilde\rho,\widetilde\phi,\widetilde\bx) \in \cY$. 
\end{lemma}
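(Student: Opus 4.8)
The plan is to follow the strategy of the proof of \cref{thm:ex_state}. Written out, \eqref{eq:e_y_system} is a coupled \emph{linear} system consisting of a linear parabolic equation for $\widetilde\rho$, a linear elliptic equation for $\widetilde\phi$ (parametrised by $t$), and a linear Volterra integral equation for each $\widetilde x_i$. Since $W^{2,1}_p(Q_T)\embeds C(\closure{Q_T})$ by \eqref{eq:w12q_embedding}, the point evaluations $\overline{\widetilde\rho}(s,x_i(s))$ appearing in $e_5$ are meaningful as soon as $\widetilde\rho\in W^{2,1}_p(Q_T)$, so---in contrast to the nonlinear system---no artificial regularisation of the $\widetilde\bx$-equation is required, and I would set up a Banach fixed point argument directly in $W^{2,1}_p(Q_T)$. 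As in \cref{thm:ex_state}, the result is first proved for small $T$ and then extended to arbitrary $T$ by a continuation argument, all constants being uniform with respect to the length of the time interval.

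Given $\widetilde\rho\in W^{2,1}_p(Q_T)$ I would (a) solve the fourth equation of \eqref{eq:e_y_system} for $\widetilde\phi$, (b) solve the fifth equation, a linear Volterra equation, for $\widetilde\bx$, and (c) solve the first three equations, a linear parabolic problem, for a new $\widetilde\rho$, thereby defining the fixed point operator. For step~(a), multiplying the equation by $\e^{-2\phi/\delta_1}$ turns the non-variational operator $-\delta_1\laplace\widetilde\phi+2\nabla\phi^\transp\nabla\widetilde\phi$ into the divergence-form operator $-\delta_1\,\nabla\cdot\paren[big](){\e^{-2\phi/\delta_1}\nabla\widetilde\phi}$, whose coefficient is bounded and uniformly positive because $\phi\in L^\infty(Q_T)$ (as $\phi\in L^\infty(0,T;W^{2,p}(\Omega))$); Lax--Milgram and elliptic regularity, exactly as in the proof of \cref{lem:ex_phi}, then give, for \aa\ $t$, $\widetilde\phi(\cdot,t)\in W^{2,p}_\text{ND}(\Omega)$ with $\norm{\widetilde\phi(\cdot,t)}_{W^{2,p}(\Omega)}\le C\paren[big](){\norm{F_4(\cdot,t)}_{L^p(\Omega)}+\norm{\widetilde\rho(\cdot,t)}_{L^p(\Omega)}}$, and differentiating in time (using $\phi\in W^{1,p}(0,T;W^{1,p}(\Omega))$ and $\widetilde\rho\in W^{2,1}_p(Q_T)$) yields the additional regularity needed so that $\widetilde\phi$ lies in the $\phi$-component of $\cY$. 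Step~(b) is a linear fixed point/Gronwall argument in the spirit of \cref{lem:ode_existence,lem:ODE_reg}: the Volterra kernel $f'(\overline\rho(s,x_i(s)))\,\nabla\overline\rho(s,x_i(s))\,u_i(s)$ is in $L^p(0,T)$ because $\rho\in W^{2,1}_p(Q_T)$ and $\abs{u_i}\le1$, giving a unique $\widetilde x_i\in W^{1,\infty}(0,T)\embeds W^{1,s}(0,T)$ together with a stability estimate bounding $\widetilde\bx$ by $\widetilde\rho$ and $\widetilde\phi$. For step~(c), writing as in \cref{thm:ex_state}
\[
	\widetilde\rho\,\beta(\by)+\rho\,\frac{\partial\beta(\by)}{\partial\by}\widetilde\by
	=
	g'(\rho)\,\widetilde\rho\,h(\Phi)+g(\rho)\,Dh(\Phi)\paren[big](){\nabla\widetilde\phi-\textstyle\sum_{i=1}^M\nabla^2K(\cdot-x_i)\,\widetilde x_i}
\]
with $\Phi=\nabla(\phi+\phi_K(\bx;\cdot))$ and $g(\rho)=\rho\,f(\rho)$, and expanding the divergence, the $\widetilde\rho$-equation becomes a linear parabolic problem with lower-order coefficients $b\in L^p(0,T;L^\infty(\Omega))$ and $c\in L^p(Q_T)$ (controlled by \cref{lem:important_bounds} and $C_\rho$), right-hand side in $L^p(Q_T)$ (note $\nabla^2\widetilde\phi\in L^p(Q_T)$ and $F_1\in L^p(Q_T)$), initial datum $F_3\in W^{2-2/p,p}(\Omega)$, and flux boundary datum $F_2-\paren[big](){\widetilde\rho\,\beta(\by)+\rho\,\frac{\partial\beta(\by)}{\partial\by}\widetilde\by}\cdot n-\chi_{\partial\Omega_\textup{D}}\,\eta\,\widetilde\rho$; \cref{lem:W2p_regularity} then delivers the new $\widetilde\rho\in W^{2,1}_p(Q_T)$.

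The only step requiring genuine work is showing that this flux boundary datum lies in $W^{1-1/p,1/2-1/(2p)}_p(\Sigma_T)$, which is done exactly as in the proof of \cref{lem:e_differentiable}: one bounds its spatial derivative in $L^p(Q_T)$ and its temporal derivative in $L^s(0,T;L^p(\Omega))$, using the Hölder inequality together with the Lipschitz and boundedness properties of $f$, $h$ and $K$, and then invokes the embedding \eqref{eq:embedding_for_tracetheorem} and the trace \cref{lem:trace_theorem}. Combining the stability estimates of (a), (b) and (c) one sees that the dependence of the new $\widetilde\rho$ on the old $\widetilde\rho$ carries positive powers of $T$---from Hölder's inequality in time (\eg\ $\norm{\widetilde\rho}_{L^p(Q_T)}\le T^{1/p}\norm{\widetilde\rho}_{C([0,T];L^p(\Omega))}$ and $\norm{\partial_t\widetilde\rho}_{L^s(0,T;L^p(\Omega))}\le T^{1/2}\norm{\partial_t\widetilde\rho}_{L^p(Q_T)}$ with $\frac1s=\frac12+\frac1p$), and from maximal parabolic regularity with homogeneous initial datum in the contraction difference---so that the operator is a contraction on $W^{2,1}_p(Q_T)$ for $T$ small. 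Banach's theorem and the continuation argument then yield the unique solution $\widetilde\by=(\widetilde\rho,\widetilde\phi,\widetilde\bx)\in\cY$; uniqueness of the full system can alternatively be read off directly from a Gronwall a~priori estimate obtained by testing the $\widetilde\rho$-equation with $\widetilde\rho$ in the low-regularity space $L^\infty(0,T;L^2(\Omega))\cap L^2(0,T;H^1(\Omega))$.

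I expect the main obstacle to be the fractional-order, anisotropic trace estimate for the linearised flux boundary datum: it is technically the heaviest point, but it proceeds along the pattern already set in \cref{lem:e_differentiable} and in Step~2 of the proof of \cref{thm:ex_state}. A secondary subtlety is the solvability of the linearised elliptic equation, whose first-order term $2\nabla\phi^\transp\nabla\widetilde\phi$ destroys the naive coercivity; this is circumvented by the exponential multiplier $\e^{-2\phi/\delta_1}$ described above, which is the linearised counterpart of the substitution $\psi=\e^{-\phi/\delta_1}-1$ used in \cref{lem:ex_phi}.
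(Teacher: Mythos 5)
Your proposal matches the paper's own proof: the same fixed-point map $\widetilde\rho\mapsto F_\rho(\widetilde\rho,F_\phi(\widetilde\rho),F_\bx(\widetilde\rho))$ in $W^{2,1}_p(Q_T)$, with the linearized Eikonal equation and the linear Volterra equation for $\widetilde\bx$ solved first, maximal parabolic regularity (\cref{lem:W2p_regularity}) plus the trace argument of \cref{lem:e_differentiable} for the linearized flux boundary datum, small powers of $T$ yielding a contraction of the affine map, and a continuation argument for arbitrary $T$. The only deviation is your exponential multiplier $\e^{-2\phi/\delta_1}$ for the linearized Eikonal equation, where the paper instead cites Thm.~3.17 in \cite{Troianiello:1987:1} (see also Prop.~2 in \cite{KunischNeicPlankTrautmann:2019:1}); both are valid, yours being the linearized counterpart of the transformation already used in \cref{lem:ex_phi}.
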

\begin{proof}
	The strategy of the proof is to apply Banach's fixed point theorem to the linear system \eqref{eq:e_y_system} to avoid technicalities that arise from the fact that $\widetilde \bx(t)$ depends on $\widetilde \rho$ non-locally in time. 
	To this end, we introduce three solution operators.

	First, there is $F_\rho\colon \cY \to W_p^{2,1}(Q_T)$ which maps $\widetilde \by = (\widetilde\rho,\widetilde\phi,\widetilde \bx)$ to $\widehat \rho\in W^{2,1}_p(Q_T)$, which is defined as the solution to 
	\begin{equation}
		\label{eq:linearized_rho_equation}
		\begin{aligned}
			\partial_t \widehat \rho- \varepsilon \laplace \widehat\rho - 
			\nabla \cdot\paren[auto](){\widetilde \rho\,\beta(\by) + 
			\rho\,\frac{\partial\beta(\by)}{\partial \by}\widetilde \by} 
			&
			= 
			F_1 
			&
			&
			\text{in } Q_T
			, 
			\\
			\paren[auto](){\varepsilon \nabla \widehat\rho + \widetilde 
			\rho\,\beta(\by) + \rho \,\frac{\partial\beta(\widetilde 
			\by)}{\partial \by}} \cdot n + 
			\chi_{\partial\Omega_\textup{D}}\,\eta\,\widetilde\rho
			&
			= 
			F_2 
			&
			&
			\text{on } \Sigma_\textup{W}
			,
			\\
			\widehat \rho(0) 
			&
			= 
			F_3 
			&
			&
			\text{in } \Omega
			.
		\end{aligned}
	\end{equation}
	Second, we define the operator $F_\phi\colon W^{2,1}_p(Q_T) \to L^\infty(0,T;W_\text{ND}^{2,p}(\Omega))\cap H^1(0,T;H^1(\Omega))$ which maps $\widetilde \rho$ to the solution $\widetilde\phi$ of
	\begin{equation}
		\label{eq:linearized_Eikonal}
		-\delta_1\laplace \widetilde\phi + 2 \nabla\phi^\transp\nabla\widetilde\phi + \frac{2 f(\rho)\,f'(\rho)}{(f^2(\rho)+\delta_2)^2} \widetilde \rho = F_4\quad\text{in}\ Q_T
		,
	\end{equation}
	together with the boundary conditions \eqref{eq:forward_bc} (these are incorporated in the function space).

	Third, we introduce $F_\bx\colon W^{2,1}_p(Q_T) \to W^{1,\infty}([0,T];\R^n)^M$ mapping $\widetilde \rho$ to the solution $\widetilde\bx=(\widetilde x_1,\ldots,\widetilde x_M)^\transp$ of
	\begin{equation*}
		\widetilde x_i(t) 
		= 
		\int_0^t f'\paren[big](){\overline\rho(s,x_i(s))} \paren[auto](){\nabla\overline \rho(s,x_i(s))^\transp \widetilde x_i(s) + \overline{\widetilde\rho}(s,x_i(s))} u_i(s) \ds + F_{5,i}(t)
		,
	\end{equation*}
	for $i=1,\ldots,M$.

	The idea of the proof is to apply Banach's fixed point theorem in the space $W^{2,1}_p(Q_T)$ to the operator
	\begin{align}\label{eq:def_fixed_linear}
		\widehat \rho = F(\widetilde\rho) \coloneqq
		F_\rho(\widetilde\rho,F_\phi(\widetilde\rho),F_{\bx}(\widetilde\rho)).
	\end{align} 
	As the operator $F$ is affine, it suffices to show the boundedness of the linear part of $F$ by a constant smaller than~$1$, which will imply that $F(\widetilde\rho)$ is a contraction. 

	\underline{Step 1: Estimate for $F_\phi$.}
	Due to $\phi(t)\in W^{2,p}(\Omega) \embeds W^{1,\infty}(\Omega)$ for $p>2$ and \aa $t\in (0,T)$, we confirm that $\nabla\phi\in L^\infty(Q_T)$ holds.
	Moreover, taking into account $f\in W^{1,\infty}(\R)$ and $\rho\in W^{2,p}(Q_T) \embeds L^\infty(Q_T)$, we get
	\begin{equation*}
		\norm[auto]{\frac{2 f(\rho)\,f'(\rho)}{(f^2(\rho)+\delta_2)^2}\,\widetilde \rho}_{L^\infty(0,T;L^p(\Omega))}
		\le 
		2\,C_f\,C_\rho\,\delta_2^{-2}\,\norm{\widetilde \rho}_{L^\infty(0,T;L^p(\Omega))}
		.
	\end{equation*}
	From Thm.~3.17 in \cite{Troianiello:1987:1}, see also Prop.~2 in \cite{KunischNeicPlankTrautmann:2019:1} for an application to the linearized Eikonal equation, we then deduce that the problem \eqref{eq:linearized_Eikonal} possesses for \aa $t\in (0,T)$ a unique solution $\widetilde\phi(t)\in W^{2,p}_\text{ND}(\Omega)$, which fulfills the inequality
	\begin{equation}
		\label{eq:continuity_F_phi}
		\norm{\widetilde\phi}_{L^\infty(0,T;W^{2,p}(\Omega))} 
		\le 
		C\paren[auto](){\norm{\widetilde\rho}_{L^\infty(0,T;L^p(\Omega))} + \norm{F_4}_{L^\infty(0,T;L^p(\Omega))}}
		.
	\end{equation}
	Furthermore, formal differentiation of \eqref{eq:linearized_Eikonal} with respect to time and exploiting $\partial_t F_4\in L^p(0,T;W^{1,p}(\Omega)^*)$ yields
	\begin{equation}
		\label{eq:continuity_F_phi_timederiv}
		\norm{\partial_t \widetilde \phi}_{L^p(0,T;W^{1,p}(\Omega))}
		\le 
		C\paren[auto](){\norm{\widetilde\rho}_{W^{2,1}_p(Q_T)} + \norm{F_4}_{L^\infty(0,T;L^p(\Omega))} + \norm{\partial_tF_4}_{L^p(0,T;W^{1,p}(\Omega)^*)}}
		.
	\end{equation}
	In the second estimate $C$ may depend on $T$, via norms of $\partial_t\phi$, $\partial_t \rho$, etc..
	Since this implies that $C$ is decreasing when $T$ is decreasing, this does not affect the contraction argument.

	\underline{Step 2: Estimate for $F_{\bx}$.}
	Next, we show the boundedness of the solution operator $F_{\bx}$. 
	To this end, we define $C_\rho(t)\coloneqq\norm{\rho(t,\cdot)}_{W^{2,p}(\Omega)}$ and deduce from the last equation in \eqref{eq:e_y_system}, using the Hölder inequality with $1/p+1/q=1$ and the continuity of $\E\colon W^{2,p}(\Omega) \embeds C^{1,\alpha}(\closure{\Omega})\to C^{1,\alpha}(\R^2)$, see \cref{lem:extension}:
	\begin{align*}
		\abs{\widetilde x_i(t)}
		&
		\le 
		\int_0^t\abs{f'(\overline\rho(s,x_i(s)))\left(\nabla\overline\rho(s,x_i(s))^\transp \widetilde x_i(s)
		+ \overline{\widetilde\rho}(s,x_i(s))\right)u_i(s)} \ds + \abs{F_{5,i}(t)}
		\\
		&
		\le 
		C_f\,C_\E\,C_\infty \paren[auto](){\int_0^t C_\rho(s)\,\abs{\widetilde x_i(s)}\ds + t^{1/q}\,\norm{\widetilde\rho}_{W^{2,1}_p(Q_t)}}
		+ \norm{F_{5,i}}_{L^\infty(0,t)}
		.
	\end{align*}
	Due to the Gronwall inequality we obtain
	\begin{equation*}
		\abs{\widetilde x_i(t)}
		\le
		\big(C_f \, C_\E \, C_\infty t^{1/q}\,\norm{\widetilde\rho}_{W^{2,1}_p(Q_T)} + \norm{F_{5,i}}_{L^\infty(0,t)}\big)\,
		\exp\paren[auto](){C_f\,C_\E\,C_\infty\int_0^t C_\rho(s)\ds}
		.
	\end{equation*}

	We can further estimate $\int_0^t C_\rho(s)\ds \le t^{1/q}\,\norm{\rho}_{W^{2,1}_p(Q_t)}$.
	The constants depending only on input data and the linearization point $\rho$ are shifted into the generic constants $c,C$, which depend on $T$ in monotonically decreasing way only. 
	This implies
	\begin{equation}
		\label{eq:continuity_rho_to_x_Linfty}
		\norm{\widetilde x_i}_{L^\infty(0,T)} \le C\,e^{c\,T^{1/q}}\paren[auto](){T^{1/q}\,\norm{\widetilde\rho}_{W^{2,1}_p(Q_T)} + \norm{F_{5,i}}_{L^\infty(0,T)}}
		.
	\end{equation}
	From now on we will choose $T>0$ sufficiently small such that $C\,T^{1/q}\,e^{cT^{1/q}} < 1$.

	Furthermore, we can show $L^s$-regularity for the time derivative of $\widetilde x_i$. 
	From the classical formulation of the ODE and the usual Hölder arguments we deduce
	\begin{align*}
		\abs{\dot{\widetilde x}_i(t)} 
		&
		= 
		\abs[auto]{f'(\overline\rho(t,x_i(t))) \paren[big](){\nabla\overline\rho(t,x_i(t))^\transp \widetilde x_i(t) + \overline{\widetilde\rho}(t,x_i(t))} u_i(t) + \dot{F_5}(t)} 
		\\
		&
		\le 
		C_f\,C_\E\paren[auto](){C_{\rho}(t)\,\abs{\widetilde x_i(t)} + C_{\widetilde\rho}(t)} + \abs{\dot{F}_{5,i}(t)}
	\end{align*}
	for a.a.\ $t\in [0,T]$.
	Taking the $L^s(0,T;\R^2)$-norm yields, after insertion of the estimate \eqref{eq:continuity_rho_to_x_Linfty} and an application of the Hölder inequality with $1/s = 1/2 + 1/p$,
	\begin{align}
		\label{eq:continuity_rho_to_x_W1infty}
		\norm{\widetilde x_i}_{W^{1,s}(0,T;\R^2)}
		&
		\le 
		C_f\,C_\E\,T^{1/2}\paren[auto](){\norm{\rho}_{W^{2,1}_p(Q_T)}\,\norm{\widetilde x_i}_{L^\infty(0,T;\R^2)} + \norm{\widetilde \rho}_{W^{2,1}_p(Q_T)}} 
		\breakeqn{3}
		+ \norm{F_{5,i}}_{W^{1,s}(0,T;\R^2)} 
		\nonumber 
		\\
		&
		\le 
		C\,T^{1/2}\,\norm{\widetilde\rho}_{W^{2,1}_p(Q_T)} + \paren[big](){1+C\,e^{c\,T^{1/q}}}\norm{F_{5,i}}_{W^{1,s}(0,T;\R^2)}
		.
	\end{align}

	\underline{Step 3: Estimate for $F_\rho$:}
	Next, we consider the solution operator of \eqref{eq:linearized_rho_equation}. 
	By \cref{lem:W2p_regularity} we have the a~priori estimate
	\begin{align}
		\label{eq:continuity_rho_equation_1}
		\norm{\widehat \rho}_{W^{2,1}_p(Q_T)}
		&
		\le 
		C\Bigg[\norm{F_1}_{L^p(Q_T)} + \norm{F_2}_{W^{1-1/p,1/2-1/(2p)}_p(\Sigma_\textup{W})} + \norm{F_3}_{W^{2(1-1/p),p}(\Omega)} 
		\nonumber
		\\
		& 
		+ \norm[auto]{\nabla\cdot\paren[auto](){\widetilde\rho\,\beta(\by) + \rho\frac{\partial\beta(\by)}{\partial y}\widetilde y}}_{L^p(Q_T)} 
		\nonumber
		\\
		&
		+ 
		\norm[auto]{\paren[auto](){\widetilde\rho\,\beta(\by)+\rho\,\frac{\partial\beta(\by)}{\partial
		 y}\widetilde y}\cdot 
		n+\chi_{\partial\Omega_\textup{D}}\,\eta\,\widetilde\rho}_{W^{1-1/p,1/2-1/(2p)}_p(\Sigma_T)}
		 \Bigg]
		.
	\end{align}
	It remains to discuss the last two terms on the right-hand side.
	First, we confirm with the product rule, the Hölder inequality and \cref{lem:important_bounds} that the estimate
	\begin{align}
		\label{eq:divergence_rho_beta}
		\norm{\nabla\cdot \paren[auto](){\widetilde \rho\,\beta(\by)}}_{L^p(Q_T)} 
		& 
		\le 
		C_h\,T^{1/p}\left(\norm{\nabla \widetilde\rho\,f(\rho) +
			\widetilde\rho\,f'(\rho)\,\nabla\rho}_{L^\infty(0,T;L^p(\Omega))} + 
		\norm{\widetilde \rho\, f(\rho)}_{L^\infty(Q_T)}\right)  
		\nonumber
		\\
		&
		\le 
		C\,T^{1/p}\,\norm{\widetilde \rho}_{W^{2,1}_p(Q_T)}
	\end{align}
	holds.
	Note that we also exploited regularity results for the linearization point, more precisely, $f(\rho),\rho\in W^{2,1}_p(Q_T) \embeds L^\infty(0,T;W^{1,p}(\Omega)) \embeds L^\infty(Q_T)$ (see \eqref{eq:w12q_embedding}), $\phi\in L^\infty(0,T;W^{2,p}(\Omega))$, and the assumptions on $f$, $h$ and $\phi_K$.

	Next, we show an estimate for the term involving $\rho\frac{\partial\beta(\by)}{\partial \by}\widetilde \by$ and we study all partial derivatives of $\beta$ separately. 
	For the derivative with respect to $\rho$ we get	
	\begin{align}
		\label{eq:divergence_rho_dbeta_drho}
		&
		\norm[auto]{\nabla\cdot\paren[auto](){\rho \frac{\partial\beta(\by)}{\partial\rho}\widetilde \rho}}_{L^p(Q_T)}
		= \norm{\nabla\cdot\paren[auto](){\rho\,f'(\rho)\,\widetilde\rho\,h(\Phi)}}_{L^p(Q_T)}
		\nonumber
		\\
		&
		\qquad
		\le  
		C_h\paren[auto](){\norm{\widetilde \rho\,\nabla \rho\,(f'(\rho) + \rho\,f''(\rho))
		+ \rho\,\nabla \widetilde \rho\,f'(\rho)}_{L^p(Q_T)} 
		+ \norm{\rho\,f'(\rho)\,\widetilde \rho}_{L^p(0,T;L^\infty(\Omega))}})  \nonumber\\
		&
		\qquad 
		\le 
		C\,T^{1/p} \norm{\widetilde\rho}_{W^{2,1}_p(Q_T)}
		,
	\end{align}
	where the last step follows from the Hölder inequality, $f\in W^{2,\infty}(\R)$, \cref{lem:important_bounds} and the embedding $W^{2,1}_p(Q_T) \embeds L^\infty(0,T;W^{1,p}(\Omega))\embeds L^\infty(Q_T)$, taking into account the regularity of the linearization point $(\rho,\phi,\bx)$.

	For the terms involving $\frac{\partial\beta}{\partial\phi}$ we first confirm by a simple computation
	\begin{equation*}
		\frac{\partial h(\Phi)}{\partial \phi}\widetilde \phi = Dh(\Phi) \nabla \widetilde \phi.
	\end{equation*}
	This implies with the product rule
	\begin{equation}\label{eq:estimate_rhs_deriv_phi_0}
		\norm[auto]{\nabla\cdot\paren[auto](){\rho\frac{\partial\beta(\by)}{\partial\phi}\widetilde\phi}}_{L^p(Q_T)}
		\le 
		\norm[auto]{g'(\rho)\,\nabla\rho\cdot \paren[auto](){Dh(\Phi)\,\nabla\widetilde\phi} + g(\rho)\,\nabla\cdot\paren[auto](){Dh(\Phi)\,\nabla\widetilde\phi}}_{L^p(Q_T)}
	\end{equation}
	with $g(\rho) \coloneqq \rho\,f(\rho)$ and $g'(\rho) = \rho\,f'(\rho)+f(\rho)$.
	Due to $\widetilde\phi\in L^\infty(0,T;W^{2,p}(\Omega))$ and thus, $\nabla\widetilde\phi\in L^\infty(0,T;W^{1,p}(\Omega)) \embeds L^\infty(Q_T)$, as well as $\nabla\rho\in L^\infty(0,T;L^p(\Omega))$ and \cref{lem:important_bounds}, we get for the first term on the right-hand side of \eqref{eq:estimate_rhs_deriv_phi_0}
	\begin{align*}
		\MoveEqLeft
		\norm[auto]{g'(\rho) \nabla\rho\cdot \paren[auto](){Dh(\Phi)\,\nabla\widetilde\phi}}_{L^p(Q_T)} 
		\\
		&
		\le 
		T^{1/p}\,C_g\,C_h\,\norm{\nabla \rho}_{L^\infty(0,T;L^p(\Omega))}
		\norm{\nabla \widetilde\phi}_{L^\infty(Q_T)}		
		\le 
		C\,T^{1/p}\,\norm{\widetilde\phi}_{L^\infty(0,T;W^{2,p}(\Omega))}
		.
	\end{align*}
	To bound the second term in \eqref{eq:estimate_rhs_deriv_phi_0}, we insert the identity
	\begin{equation*}
		\nabla\cdot \paren[auto](){Dh(\Phi)\nabla\widetilde\phi} 
		= 
		\paren[auto](){\nabla\cdot Dh(\Phi)}\cdot \nabla \widetilde\phi + Dh(\Phi):\nabla^2\widetilde\phi
	\end{equation*}
	and obtain, together with the bounds for $Dh$, from \cref{lem:important_bounds}
	\begin{align*}
		\norm{g(\rho)\,\nabla\cdot\paren[auto](){Dh(\Phi)\,\nabla\widetilde\phi}}_{L^p(Q_T)}
		&
		\le 
		C_g\,C_h\,T^{1/p}\,\paren[auto](){\norm{\nabla\widetilde\phi}_{L^\infty(Q_T)}
		+ \norm{\nabla^2 \widetilde\phi}_{L^\infty(0,T;L^p(\Omega))}}
		\\
		&
		\le 
		C\,T^{1/p}\,\norm{\widetilde\phi}_{L^\infty(0,T;W^{2,p}(\Omega))}
		.
	\end{align*}
	Insertion of the two previous estimates into \eqref{eq:estimate_rhs_deriv_phi_0} then yields
	\begin{equation}\label{eq:divergence_rho_dbeta_dphi}
		\norm[auto]{\nabla\cdot\paren[auto](){\rho\frac{\partial\beta(\by)}{\partial\phi}\widetilde\phi}}_{L^p(Q_T)} 
		\le 
		C\,T^{1/p}\,\norm{\widetilde\phi}_{L^\infty(0,T;W^{2,p}(\Omega))}
		.
	\end{equation}

	The derivative with respect to $x_i$, $i=1,\ldots,M$, is handled using the chain rule, 
	\cref{lem:important_bounds} and assumption \cref{assumption:K2}, leading to
	\begin{align}
		\label{eq:divergence_rho_dbeta_dx}
		\norm[auto]{\nabla\cdot \paren[auto](){\rho\frac{\partial\beta(\by)}{\partial x_i}\widetilde x_i}}_{L^p(Q_T)}
		&
		= 
		\norm{\nabla\cdot\paren[auto](){g(\rho)\,Dh(\Phi)\,\nabla^2K(\cdot-x_i)\,\widetilde x_i}}_{L^p(Q_T)} \nonumber\\
		&
		\le 
		C\,T^{1/p}\,\norm{\widetilde x_i}_{L^\infty(0,T;\R^2)}
		.
	\end{align}
	The estimates \eqref{eq:divergence_rho_beta}, \eqref{eq:divergence_rho_dbeta_drho}, \eqref{eq:divergence_rho_dbeta_dphi} and \eqref{eq:divergence_rho_dbeta_dx} give an estimate for the fourth term in \eqref{eq:continuity_rho_equation_1}, namely
	\begin{equation}\label{eq:divergence_terms_linearized_rho_eq}
		\norm[auto]{\nabla \cdot \paren[auto](){\widetilde\rho\,\beta(\by) + \rho\frac{\partial\beta(\by)}{\partial \by}\widetilde \by}}_{L^p(Q_T)}
		\le 
		C\,T^{1/p}\,\norm{\widetilde \by}_\cY
		.
	\end{equation}

	It remains to show the boundedness of the fifth term in \eqref{eq:continuity_rho_equation_1} and we apply a trace theorem as in the proof of \cref{lem:e_differentiable}, see \eqref{eq:embedding_for_tracetheorem}.  
	This requires us to show appropriate bounds in the $L^p(0,T;W^{1,p}(\Omega))$- and $W^{1,s}(0,T;L^p(\Omega))$-norms.

	First, when replacing $\nabla\cdot$ by an arbitrary partial derivative $\partial_{x_j}$ in the previous considerations, we get analogously to \eqref{eq:divergence_terms_linearized_rho_eq}
	\begin{equation}
		\label{eq:linearized_system_spatial_boundary_reg}
		\norm[auto]{\widetilde\rho\,\beta(\by) + \rho\,\frac{\partial\beta(\by)}{\partial \by} \widetilde \by}_{L^p(0,T;W^{1,p}(\Omega))} 
		\le 
		C\,T^{1/p}\,\norm{\widetilde \by}_\cY
		.
	\end{equation}
	Furthermore, we study the temporal derivative of the expressions occurring in the fifth term of \eqref{eq:continuity_rho_equation_1}.
	The Hölder inequality and \cref{lem:important_bounds} yield
	\begin{align*}
		\norm{\partial_t (\widetilde\rho\,\beta(\by))}_{L^s(0,T;L^p(\Omega))}
		&
		\le 
		T^{1/2}\,\norm{\partial_t \paren[auto](){\widetilde \rho\,f(\rho)\, h(\Phi)}}_{L^p(Q_T)} 
		\\
		&
		\le 
		C_f\,C_h\,T^{1/2} \paren[auto](){\norm{\partial_t\widetilde \rho}_{L^p(Q_T)}+\norm{\widetilde\rho}_{L^\infty(Q_T)}\,(\norm{\partial_t\rho}_{L^p(Q_T)} + 1)} 
		\\
		&
		\le 
		C_f\,C_h\,C_\infty\,(C_\rho+1)\,T^{1/2}\,\norm{\widetilde \rho}_{W^{2,1}_p(Q_T)}
		.
	\end{align*}
	For the term involving $\frac{\partial \beta(\by)}{\partial \rho}\widetilde \rho$ we get
	\begin{align*}
		&
		\norm[auto]{\partial_t (\rho\,\frac{\partial \beta(\by)}{\partial \rho}\widetilde\rho)}_{L^s(0,T;L^p(\Omega))}
		\le 
		T^{1/2}\,\norm{\partial_t (\rho\,f'(\rho)\,\widetilde \rho\,h(\Phi))}_{L^p(Q_T)}\\
		&
		\quad
		\le 
		C_f\,C_h\,T^{1/2}\,\paren[auto](){2\,\norm{\partial_t \rho}_{L^p(Q_T)}\,\norm{\widetilde\rho}_{L^\infty(Q_T)} + \norm{\rho}_{L^\infty(Q_T)}\,(\norm{\partial_t \widetilde\rho}_{L^p(Q_T)} +\norm{\widetilde\rho}_{L^\infty(Q_T)})}
		\\
		&
		\quad 
		\le 
		C_f\,C_h\,C_\rho\,C_\infty\,T^{1/2}\,\norm{\widetilde\rho}_{W^{2,1}_p(Q_T)}
		.
	\end{align*}
	For the term depending on $\frac{\partial \beta(\by)}{\partial \phi}\widetilde \phi$ we obtain
	\begin{align*}
		&
		\norm[auto]{\partial_t \paren[auto](){\rho\,\frac{\partial \beta(\by)}{\partial \phi}\widetilde\phi}}_{L^s(0,T;L^p(\Omega))}
		\le 
		T^{1/2}\,\norm{\partial_t(g(\rho)\,Dh(\Phi)\,\nabla\widetilde\phi)}_{L^p(Q_T)} 
		\\
		&
		\quad 
		\le 
		C_g\,C_h\,T^{1/2}\,\paren[auto](){\paren[big](){\norm{\partial_t \rho}_{L^p(Q_T)}+1}\norm{\nabla \widetilde\phi}_{L^\infty(Q_T)} + \norm{\partial_t\nabla \widetilde\phi}_{L^p(Q_T)}} 
		\\
		&
		\quad 
		\le 
		C_g\,C_h(2+C_\rho)\,T^{1/2}\,\paren[auto](){\norm{\widetilde\phi}_{L^\infty(0,T;W^{2,p}(\Omega))} + \norm{\partial_t \widetilde\phi}_{L^p(0,T;W^{1,p}(\Omega))}}
		.
	\end{align*}
	Finally, for the derivative with respect to $x_i$ we derive
	\begin{align*}
		\MoveEqLeft
		\norm[auto]{\partial_t \paren[auto](){\rho\,\frac{\partial \beta(\by)}{\partial x_i}\widetilde x_i}}_{L^s(0,T;L^p(\Omega))}
		\\
		&
		= 
		\norm{\partial_t(g(\rho)\,Dh(\Phi)\,\nabla^2K(\cdot-x_i)\,\widetilde x_i)}_{L^s(0,T;L^p(\Omega))} 
		\\
		&
		\le 
		C_g\,C_h\,C_K\,C_\bx\paren[auto](){(C_\rho+1)\,T^{1/2}\,\norm{\widetilde x_i}_{L^\infty(0,T;\R^2)} + 2\,\norm{\widetilde x_i}_{W^{1,s}(0,T;\R^2)}}
		.
	\end{align*}

	The previous four estimates and an embedding yield the desired property
	\begin{align}
		\label{eq:linearized_system_temporal_boundary_reg}
		\norm[auto]{\widetilde\rho\,\beta(\by)+\rho\,\frac{\partial\beta(\by)}{\partial \by}\widetilde \by}_{W^{1/2,p}(0,T;L^p(\Omega))} 
		&
		\le 
		C\,\norm[auto]{\widetilde\rho\,\beta(\by)+\rho\,\frac{\partial\beta(\by)}{\partial \by}\widetilde \by}_{W^{1,s}(0,T;L^p(\Omega))}
		\nonumber
		\\
		&
		\le 
		C\,T^{1/2}\paren[auto](){\norm{\widetilde \rho}_{\cY_1} + \norm{\widetilde \phi}_{\cY_2}} 
		+ C\,\norm{\widetilde \bx}_{\cY_3}
		.
	\end{align}

	Next, we may combine the estimates \eqref{eq:linearized_system_temporal_boundary_reg} and \eqref{eq:linearized_system_spatial_boundary_reg} and apply the trace \cref{lem:trace_theorem} to arrive at
	\begin{align}\label{eq:linearized_rho_eq_bd_terms}
		\MoveEqLeft
		\norm[auto]{\paren[auto](){\widetilde\rho\,\beta(\by)+\rho\,\frac{\partial\beta(\by)}{\partial
		 \by}\widetilde \by}\cdot 
		n+\chi_{\partial\Omega_\textup{D}}\,\eta\,\widetilde\rho}_{W^{1-1/p,1/2-1/(2p)}_p(\Sigma_T)}
		\nonumber 
		\\
		&
		\le
		C\,T^{1/p}\paren[auto](){\norm{\widetilde \rho}_{\cY_1} + \norm{\widetilde \phi}_{\cY_2}} 
		+ C\,\norm{\widetilde \bx}_{\cY_3}
		.
	\end{align}

	From the estimates \eqref{eq:divergence_terms_linearized_rho_eq} and \eqref{eq:linearized_rho_eq_bd_terms} we deduce that there exists a unique solution $\widehat \rho\in W^{2,1}_p(Q_T)$ of \eqref{eq:linearized_rho_equation}. 
	Moreover, together with \eqref{eq:continuity_rho_equation_1} using $p>2$, we obtain for sufficiently small $T>0$ the a~priori estimate
	\begin{align}
		\label{eq:continuity_F_rho}
		\norm{\widehat \rho}_{W^{2,1}_p(Q_T)}
		&
		\le 
		C\,\Big(T^{1/p}\paren[auto](){\norm{\widetilde \rho}_{\cY_1} + \norm{\widetilde \phi}_{\cY_2}} 
		+ \norm{\widetilde \bx}_{\cY_3}
		\nonumber
		\\
		&
		\qquad
		+ \norm{F_1}_{L^p(Q_T)} + \norm{F_2}_{W^{1-1/p,1/2-1/(2p)}_p(\Sigma_\textup{W})} + \norm{F_3}_{W^{2(1-1/p),p}(\Omega)}\Big)
		.
	\end{align}

	\underline{Step 4: The fixed-point argument}

	Now, we can combine the estimates \eqref{eq:continuity_F_phi}, \eqref{eq:continuity_F_phi_timederiv}, \eqref{eq:continuity_rho_to_x_W1infty} and \eqref{eq:continuity_F_rho} to deduce the continuity of the operator $F(\widetilde\rho)=F_\rho(\widetilde\rho,F_\phi(\widetilde\rho),F_\bx(\widetilde\rho))$. 
	This gives the estimate
	\begin{equation*}
		\norm{F(\widetilde\rho)}_{W^{2,1}_p(Q_T)} 
		\le 
		C\,\norm{F}_\cZ + C\,T^{1/p}\,\norm{\widetilde\rho}_{W^{2,1}_p(Q_T)}
	\end{equation*}
	for $T \le 1$.

	As $F$ is an affine linear operator we directly conclude the contraction property provided that $T$ is sufficiently small. 
	Thus, Banach's fixed-point theorem provides the existence of a unique solution $\widetilde \rho\in W^{2,1}_p(Q_T)$ which can be extended to an arbitrary time-scale with a concatenation argument.

	The regularities of the corresponding potential $\widetilde \phi$ and the agent trajectories $\widetilde x_i$, $i=1,\ldots,M$, follow from \eqref{eq:continuity_F_phi}, \eqref{eq:continuity_F_phi_timederiv} and \eqref{eq:continuity_rho_to_x_W1infty}.
\end{proof}

The previous two lemmas, together with the implicit function theorem, imply the differentiability of the control-to-state operator. 
The derivative in a direction $\widetilde\bu\in L^\infty(0,T;\R^2)^M$ can be computed by means of
\begin{equation*}
	e_\by(\by,\bu) \, \widetilde \by 
	= 
	- e_\bu(\by,\bu) \, \widetilde\bu
	.
\end{equation*}
We summarize the final result in the following theorem.

\begin{theorem}
	\label{thm:differentiability_S}
	The control-to-state operator $S \colon \cU \to \cY$ is Fréchet-differentiable and $\widetilde \by = S'(\bu) \, \widetilde \bu$ for given $\widetilde \bu \in \cU$ is characterized by the unique solution of 
	\begin{equation}
		\label{eq:linearized_system_strong}
		\begin{aligned}
			\partial_t \widetilde \rho - \varepsilon \laplace \widetilde\rho - 
			\nabla\cdot\left(\widetilde \rho \,\beta(\rho,\phi,\bx) + 
			\rho\left(\frac{\partial\beta(\rho,\phi,\bx)}{\partial\rho}\widetilde
			 \rho
					+ \frac{\partial\beta(\rho,\phi,\bx)}{\partial\phi}\widetilde \phi
			+ \frac{\partial\beta(\rho,\phi,\bx)}{\partial\bx}\widetilde \bx \right)\right)
			= 
			0
			,
			\\
			-\delta_1 \laplace \widetilde\phi + 2\nabla\phi\cdot\nabla \widetilde\phi + \frac{2 f(\rho)\,f'(\rho)}{(f^2(\rho)+\delta_2)^2} \widetilde \rho
			= 
			0 
			,
			\\
			\dot{\widetilde x}_i - f'(\overline\rho(\cdot,x_i))
			\left(\nabla\overline\rho(\cdot,x_i)^\transp \widetilde x_i 
			+ \overline{\widetilde\rho}(\cdot,x_i) \right)u_i 
			=
			f(\overline\rho(\cdot,x_i)) \, \widetilde u_i,
		\end{aligned}
	\end{equation}
	for $i=1,\ldots,M$, together with the boundary conditions \eqref{eq:forward_bc} and homogeneous initial conditions
	\begin{equation*}
		\widetilde\rho(\cdot,0) 
		= 
		0
		\quad
		\text{and}
		\quad
		\widetilde x_i(0) = 0,
		\quad
		i=1,\ldots,M.
	\end{equation*}
\end{theorem}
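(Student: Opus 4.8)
The plan is to apply the implicit function theorem in Banach spaces to the equation $e(\by,\bu) = 0$, with $e \colon \cY \times \cU \to \cZ$ the operator from \eqref{eq:definition_functional_e}. By \cref{thm:ex_state}, for each $\bu \in \cU$ the equation $e(\by,\bu) = 0$ has the unique solution $\by = S(\bu) \in \cY$, so it suffices to establish that this solution map is Fréchet differentiable. \cref{lem:e_differentiable} provides the continuous Fréchet differentiability of $e$, and the preceding lemma shows that $\partial_\by e(\by,\bu) \colon \cY \to \cZ$ is a bounded linear bijection, hence a topological isomorphism by the open mapping theorem. Fixing an arbitrary $\bu_0 \in \cU$ and applying the implicit function theorem at $(S(\bu_0),\bu_0)$ yields a neighbourhood of $\bu_0$ and a continuously Fréchet differentiable map $\bu \mapsto \by(\bu)$ with $e(\by(\bu),\bu) = 0$; the uniqueness part of \cref{thm:ex_state} forces $\by(\bu) = S(\bu)$ on this neighbourhood, so $S$ is Fréchet differentiable near $\bu_0$, and since $\bu_0$ was arbitrary, on all of $\cU$.

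To identify the derivative I would differentiate the identity $e(S(\bu),\bu) = 0$: for $\widetilde\bu \in \cU$ the chain rule gives $e_\by(\by,\bu)\,\widetilde\by = -\,e_\bu(\by,\bu)\,\widetilde\bu$ with $\widetilde\by \coloneqq S'(\bu)\,\widetilde\bu$, and by the preceding lemma $\widetilde\by$ is the unique $\cY$-solution of this linear system. Only the component $e_5$ of $e$ depends on $\bu$, and since $e_{5,i}$ is affine in $u_i$, a direct computation from \eqref{eq:definition_functional_e} shows that the right-hand side $F \coloneqq -\,e_\bu(\by,\bu)\,\widetilde\bu$ has vanishing components $F_1 = F_2 = F_3 = F_4 = 0$ and
\begin{equation*}
	F_{5,i}(t) = \int_0^t f\bigl(\overline\rho(s,x_i(s))\bigr)\,\widetilde u_i(s)\,\ds, \qquad i = 1,\ldots,M.
\end{equation*}
Here $F_{5,i} \in W^{1,s}(0,T;\R^2)$ because $f$ is bounded and $\widetilde u_i \in L^\infty(0,T;\R^2) \embeds L^s(0,T;\R^2)$, so that indeed $F \in \cZ$.

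It remains to insert this $F$ into the linearized system \eqref{eq:e_y_system} of the preceding lemma. Then $F_1 = F_2 = 0$ together with $F_3 = 0$ produces the $\widetilde\rho$-equation with the homogeneous flux boundary data from \eqref{eq:forward_bc} and $\widetilde\rho(\cdot,0) = 0$; $F_4 = 0$ produces the linearized Eikonal equation; and differentiating the integral identity for $\widetilde x_i$ in time, using $F_{5,i}(0) = 0$ and $\dot F_{5,i}(t) = f\bigl(\overline\rho(t,x_i(t))\bigr)\,\widetilde u_i(t)$, produces the agent equation of \eqref{eq:linearized_system_strong} with $\widetilde x_i(0) = 0$. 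Since the analytic weight is carried entirely by \cref{lem:e_differentiable} and the bijectivity lemma, the only points requiring care at this stage are the $W^{1,s}$-regularity of $F_{5,i}$ (needed so that $F$ lies in $\cZ$) and the appeal to uniqueness in \cref{thm:ex_state} to globalize the locally defined implicit function — neither of which is a genuine obstacle.
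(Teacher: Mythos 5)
Your proposal is correct and follows essentially the same route as the paper: the implicit function theorem applied to $e(\by,\bu)=0$, using \cref{lem:e_differentiable} and the bijectivity of $\partial_\by e(\by,\bu)$, and then identifying $S'(\bu)\,\widetilde\bu$ from $e_\by(\by,\bu)\,\widetilde\by = -e_\bu(\by,\bu)\,\widetilde\bu$. Your explicit computation of $F_{5,i}$, the check that it lies in $W^{1,s}(0,T;\R^2)$, and the globalization of the local implicit function via the uniqueness statement of \cref{thm:ex_state} are details the paper leaves implicit, and they are carried out correctly.
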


\section{Optimal Control Problem}\label{sec:opti}

In this section we come back to the optimal control problem outlined in \cref{subsec:optimal_control_problem}.
We consider objective functionals of the form
\begin{equation}
	\label{eq:general_objective}
	J(\rho,\phi,\bx;\bu) \coloneqq\Psi(\rho,\phi,\bx) + \frac{\alpha}{2\,T} \norm{\bu}_{H^1(0,T;\R^2)^M}^2,
\end{equation}
where $\alpha>0$ is a regularization parameter and the functional $\Psi\colon \cY\to \R$ fulfills the following assumption:
\begin{enumerate}[label=(J\arabic*)]
	\item\label{item:assumption_objective} 
		The functional $\Psi\colon\cY\to\R$ is weakly lower
		semi-continuous and bounded from below on
		$\{(\rho,\phi,\bx)\in \cY\colon \rho\ge 0\ \text{a.\,e.\ in}\ Q_T\}$.
\end{enumerate}
The assumed weak lower semi-continuity is fulfilled, \eg, when $\Psi$ is
convex and continuous. As the density part $\rho$ of each solution of the forward model
\eqref{eq:forward_system} is non-negative it suffices to assume boundedness of
$\Psi$ on a subset only.

We consider the following optimal control problem:
\begin{equation}
	\label{eq:optimal_control_problem}
	\left\lbrace
		\begin{aligned}
			\text{Minimize}
			\quad
			&
			J(\rho,\phi,\bx;\bu) \quad \text{where } (\rho,\phi,\bx; \bu) \in \cY\times\cU 
			\\
			\text{subject to}
			\quad
			&
			(\rho,\phi,\bx) = S(\bu)
			\\
			\text{and}
			\quad
			&
			\bu \in \Uad
			.
		\end{aligned}
	\right.
\end{equation}
The set of admissible controls is defined by
\begin{equation*}
	\Uad
	\coloneqq 
	\setDef[auto]{\bu\in \cU}{\norm{u_i}_{L^\infty(0,T)} \le 1 \text{ for } i=1,\ldots,M}
	.
\end{equation*}

This general framework covers in particular the two applications mentioned in 
\cref{sec:intro}

With standard arguments, see, \eg, Ch.~4.4 in \cite{Troeltzsch:2010:1}, \cref{item:assumption_objective} and \cref{lem:weak_continuity} imply the following existence result for \eqref{eq:optimal_control_problem}.

\begin{theorem}
	Let assumptions \cref{assumption:A1}--\cref{assumption:A4}, \cref{assumption:K1}--\cref{assumption:K2}, \cref{assumption:C1} and \cref{item:assumption_objective} hold. 
	Then problem \eqref{eq:optimal_control_problem} possesses at least one global solution $\bu^*\in \Uad$.
\end{theorem}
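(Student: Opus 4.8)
The plan is to use the direct method of the calculus of variations. First I would observe that the feasible set is nonempty, since $\bu = 0 \in \Uad$, and that the objective is bounded from below along feasible points: by \cref{item:assumption_objective} the functional $\Psi$ is bounded below on $\{(\rho,\phi,\bx)\in\cY : \rho \ge 0\ \text{\ale in } Q_T\}$, and by \cref{thm:ex_state} the density component of $S(\bu)$ satisfies $0 \le \rho \le 1$ \ale in $Q_T$ for every admissible $\bu$, while the regularization term is nonnegative. Hence $j^\ast \coloneqq \inf\setDef{J(\rho,\phi,\bx;\bu)}{(\rho,\phi,\bx) = S(\bu),\ \bu\in\Uad}$ is finite, and we may pick a minimizing sequence $(\bu_n)_{n\in\N}\subset\Uad$ with states $(\rho_n,\phi_n,\bx_n) = S(\bu_n)$ such that $J(\rho_n,\phi_n,\bx_n;\bu_n)\to j^\ast$.

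The decisive step is compactness of the controls. Since $\cU = L^\infty(0,T;\R^2)^M$ is not reflexive, I would not extract a weakly convergent subsequence directly in $\cU$, but instead exploit the coercivity of the regularization term: from the convergence of $J$ and the lower bound on $\Psi$ we obtain a uniform bound $\norm{\bu_n}_{H^1(0,T;\R^2)^M}\le C$. Passing to a subsequence (not relabeled), $\bu_n\weakly\bu^\ast$ in $H^1(0,T;\R^2)^M$, and since in one space dimension the embedding $H^1(0,T)\embeds C([0,T])$ is compact, also $\bu_n\to\bu^\ast$ strongly in $C([0,T];\R^2)^M$ and hence strongly in $\cU$. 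Taking the pointwise limit in $\abs{u_{i,n}(t)}\le 1$ shows $\bu^\ast\in\Uad$.

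Now I would invoke \cref{lem:weak_continuity}: the strong convergence $\bu_n\to\bu^\ast$ in $\cU$ is in particular a weakly convergent sequence, so the states converge, $(\rho_n,\phi_n,\bx_n) = S(\bu_n)\weakly S(\bu^\ast)$ in $\cY$; write $(\rho^\ast,\phi^\ast,\bx^\ast)\coloneqq S(\bu^\ast)$. Since the set of \ale nonnegative functions is convex and closed in $L^p(Q_T)$, hence weakly closed, and each $\rho_n \ge 0$ \ale, the weak limit satisfies $\rho^\ast\ge 0$ \ale, so $\Psi$ is applicable at the limit. Combining the weak lower semicontinuity of $\Psi$ on $\{\rho\ge 0\}$ from \cref{item:assumption_objective} with the weak lower semicontinuity of the convex continuous map $\bu\mapsto\norm{\bu}_{H^1(0,T;\R^2)^M}^2$, we conclude
\[
	J(\rho^\ast,\phi^\ast,\bx^\ast;\bu^\ast)
	\le
	\liminf_{n\to\infty} J(\rho_n,\phi_n,\bx_n;\bu_n)
	=
	j^\ast
	.
\]
As $\bu^\ast\in\Uad$ and $(\rho^\ast,\phi^\ast,\bx^\ast) = S(\bu^\ast)$ is feasible, equality holds and $\bu^\ast$ is a global solution of \eqref{eq:optimal_control_problem}. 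The only genuinely delicate point is the compactness argument for the controls: the admissible set is bounded only in the non-reflexive space $L^\infty$, and one must upgrade this via the coercive $H^1$-term in $J$ together with the one-dimensional compact Sobolev embedding to obtain strong convergence in $\cU$; the remaining steps are routine once \cref{lem:weak_continuity} and the sign bound from \cref{thm:ex_state} are available.
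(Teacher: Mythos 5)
Your proposal is correct and is precisely the standard direct-method argument the paper invokes (it cites Ch.~4.4 of Tr\"oltzsch together with \cref{item:assumption_objective} and \cref{lem:weak_continuity} rather than writing out the details). The two points you flag as delicate — upgrading the minimizing sequence from the non-reflexive $L^\infty$ setting to strong convergence in $\cU$ via the coercive $H^1$-regularization and the compact embedding $H^1(0,T)\embeds C([0,T])$, and using the sign bound $0\le\rho_n$ from \cref{thm:ex_state} so that the lower bound on $\Psi$ applies — are exactly the ingredients the paper's one-line proof relies on.
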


Moreover, with the properties shown for $S$ in \cref{sec:forward_system,sec:linearized_system}, in particular the Fréchet differentiability, we may deduce the following first-order necessary optimality condition, which is a direct consequence of the chain rule applied to the reduced optimization problem with objective $j(\bu) \coloneqq J(S(\bu);\bu)$ and subject to $\bu\in\Uad$:
\begin{theorem}
	\label{theorem:fonc}
	Let assumptions \cref{assumption:A1}--\cref{assumption:A4}, \cref{assumption:K1}--\cref{assumption:K2}, \cref{assumption:C1} and \cref{item:assumption_objective} hold. 
	Then each local minimizer $(\by^*,\bu^*)\in \cY\times \Uad$ of \eqref{eq:optimal_control_problem} fulfills
	\begin{equation*}
		\dual{\Psi'(\by^*)}{\delta \by}_{\cY^*\times \cY} + \frac{\alpha}{T} \, \inner{\bu^*}{\delta\bu}_{H^1(0,T)} 
		\ge 
		0
		\quad 
		\text{for all } \delta \bu \in \tangentcone{\Uad}{\bu^*},
	\end{equation*}
	where $\by^* = S(\bu^*)$ and $\delta \by = S'(\bu^*) \, \delta\bu$ and $S'$ is characterized by the system from \cref{thm:differentiability_S}.
	Moreover, $\tangentcone{\Uad}{\bu^*}$ denotes the tangent cone to $\Uad$ at $\bu^*$.
\end{theorem}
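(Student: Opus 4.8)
The plan is to reduce \eqref{eq:optimal_control_problem} to the minimization of a Fréchet-differentiable reduced objective over the convex set $\Uad$ and then invoke the classical first-order variational inequality. First I would introduce the reduced functional
\begin{equation*}
	j(\bu) \coloneqq J(S(\bu);\bu) = \Psi(S(\bu)) + \frac{\alpha}{2T}\,\norm{\bu}_{H^1(0,T;\R^2)^M}^2,
\end{equation*}
which by \cref{thm:ex_state} is well-defined, being finite on $\Uad \cap H^1(0,T;\R^2)^M$. Since $S\colon\cU\to\cY$ is Fréchet-differentiable by \cref{thm:differentiability_S}, and $\Psi$ is — in addition to \cref{item:assumption_objective} — taken to be Fréchet-differentiable (as is the case for both model functionals \eqref{eq:objective_optimal_evacuation} and \eqref{eq:objective_minimal_variance}), the chain rule yields that $j$ is Fréchet-differentiable with
\begin{equation*}
	j'(\bu)\,\delta\bu
	=
	\dual{\Psi'(S(\bu))}{S'(\bu)\,\delta\bu}_{\cY^*\times\cY} + \frac{\alpha}{T}\,\inner{\bu}{\delta\bu}_{H^1(0,T)}.
\end{equation*}

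Next I would note that $\Uad$ is nonempty, convex and closed, and that any local minimizer $(\by^*,\bu^*)$ of \eqref{eq:optimal_control_problem} gives a local minimizer $\bu^*$ of $\min_{\bu\in\Uad}j(\bu)$, because the state $\by^* = S(\bu^*)$ is uniquely determined by $\bu^*$. Differentiating $t\mapsto j(\bu^*+t(\bu-\bu^*))$ at $t=0^+$ along feasible segments produces $j'(\bu^*)(\bu-\bu^*)\ge 0$ for all $\bu\in\Uad$; since $\Uad$ is convex, $\tangentcone{\Uad}{\bu^*}$ is the closure of $\bigcup_{t>0}t\,(\Uad-\bu^*)$, so continuity of the bounded linear functional $j'(\bu^*)$ extends this to $j'(\bu^*)\,\delta\bu\ge 0$ for all $\delta\bu\in\tangentcone{\Uad}{\bu^*}$. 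Substituting $\delta\by \coloneqq S'(\bu^*)\,\delta\bu$ — characterized by \cref{thm:differentiability_S} as the unique solution of the linearized system \eqref{eq:linearized_system_strong} with homogeneous initial data — into the formula for $j'(\bu^*)$ gives exactly the asserted inequality.

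I do not expect a genuinely hard step here: the analytic substance (well-posedness of the forward system and Fréchet-differentiability of the control-to-state operator) has been absorbed into \cref{thm:ex_state,thm:differentiability_S}, and what remains is the textbook first-order condition for a smooth functional on a convex set, cf.\ Ch.~4.4 in \cite{Troeltzsch:2010:1}. The one point requiring care is the mismatch between the control space $\cU = L^\infty(0,T;\R^2)^M$ and the $H^1$-regularization term in $J$: to make $j$, $j'$ and the inner product $\inner{\bu^*}{\delta\bu}_{H^1(0,T)}$ meaningful one has to work on the effective domain $\Uad\cap H^1(0,T;\R^2)^M$ — equivalently, replace $\cU$ by $H^1(0,T;\R^2)^M$, which embeds continuously into $L^\infty(0,T;\R^2)^M$ in one time dimension — and verify that tangent directions to $\Uad$ indeed lie in $H^1(0,T;\R^2)^M$.
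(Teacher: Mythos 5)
Your proof is correct and follows essentially the same route as the paper, which likewise treats the result as a direct consequence of the chain rule applied to the reduced objective $j(\bu)=J(S(\bu);\bu)$ over the convex set $\Uad$, citing Ch.~4.4 in \cite{Troeltzsch:2010:1}. Your two caveats --- that \cref{item:assumption_objective} does not actually grant Fr\'echet differentiability of $\Psi$ (which the theorem statement nevertheless uses via $\Psi'(\by^*)$), and that the $H^1$-regularization must be reconciled with the control space $\cU=L^\infty(0,T;\R^2)^M$ --- are points the paper passes over silently, and addressing them is a welcome refinement rather than a departure.
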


The optimality conditions from \cref{theorem:fonc} can be rewritten by expressing $\dual{\Psi'(\by^*)}{\delta \by}_{\cY^*\times \cY}$ in terms of a suitably defined adjoint state.
This then serves as the starting point for gradient-based optimization methods.
Such methods, as well as appropriate discretization of the forward and adjoint 
systems and numerical results will be discussed in a forthcoming publication.

\section*{Acknowledgments}
The authors would like to thank A.~St\"otzner (IAV GmbH) and M.-T.~Wolfram 
(Warwick) for fruitful discussions. 

\appendix
\section{Auxiliary Results}
\label{section:auxiliary_results}

First we present the proof of \cref{lem:H2_reg}. 
It is based on choosing derivatives of the solution as test functions and a Galerkin approximation as well as the uniqueness of weak solutions, see also Ch.~III, Section 6 in \cite{LadyzhenskayaUraltseva:1968:1}.
\begin{proof}[Proof of \cref{lem:H2_reg}]
	Existence in $L^2(0,T;H^1(\Omega)) \cap H^1(0,T;H^1(\Omega)^*)$ follows by a standard Galerkin approximation. 
	To this end we introduce $\rho^N(t,x) = \sum_{i=0}^N d_i(t)\varphi_i(x)$, where $(\varphi_i)_{i\in \N}$ is an orthonormal basis in $H^1(\Omega)$ which is also orthogonal \wrt $L^2(\Omega)$.
	The coefficients satisfy $d_i(0) = \inner{\rho_0}{\varphi_i}_{H^1(\Omega)}$. 
	Choosing $\rho^N$ as a test function results in 
	\begin{align}\label{eq:rho_eq_energy_estimate}
		\MoveEqLeft
		\int_0^T\frac{\d}{\dt} \int_\Omega (\rho^N)^2\dt\dx
		+ \eps\int_{Q_T}  \abs{\nabla \rho^N}^2\dt\dx
		\nonumber
		\\
		&
		\le 
		\int_{Q_T} \abs{\nabla \rho^N}\abs{g(\rho^N)\,h}\dt\dx -\eta \int_{\Sigma_\textup{D}}  (\rho^N)^2 \dt \ds_x 
		\nonumber
		\\
		&
		\le 
		\frac{\eps}{2} \int_{Q_T} \abs{\nabla \rho^N}^2\dt\dx
		+ \frac{1}{2\eps}\int_{Q_T} \abs{g(\rho^N) \, h}^2 \dt \dx,
	\end{align}
	\ie 
	\begin{align}\label{eq:rho_N_parabolic_reg}
		\norm{\rho^N}_{L^\infty(0,T;L^2(\Omega))} + \norm{\nabla \rho^N}_{L^2(Q_T)} 
		\le 
		C \paren[auto](){\norm{h}_{L^2(0,T;L^2(\Omega))} + \norm{\rho_0}_{L^2(\Omega)}},
	\end{align}
	where we used $\norm{\rho^N(\cdot,0)}_{L^2(\Omega)} \le C \, \norm{\rho_0}_{L^2(\Omega)}$. 
	This readily implies that $\rho^N$ is bounded, uniformly in $N$, in
	\begin{equation*}
		L^2(0,T;H^1(\Omega)) \cap H^1(0,T;H^1(\Omega)^*) \cap C([0,T];L^2(\Omega))
		,
	\end{equation*}
	see, \eg, Ch.~7.1.2, Thm.~2 in \cite{Evans:1998:1}. 
	A subsequent application of the Aubin-Lions Lemma (Thm.~II.5.16 in 
	\cite{BoyerFabrie:2013:1}), ensures the convergence of $\rho^N$ strongly in 
	$L^2(Q_T)$ to $\rho$ and thus the convergence of the nonlinear term 
	\begin{equation*}
		\int_{Q_T} g(\rho^N)\,h\cdot \nabla \xi \dt \dx
		\to 
		\int_{Q_T} g(\rho)\,h\cdot \nabla \xi \dt \dx
	\end{equation*}
	as $N\to \infty$ which follows by dominated convergence, noting that $g(\rho^N)\,h\cdot \nabla\xi$ is uniformly bounded in $L^1(0,T;L^1(\Omega))$. 
	Hence the limit satisfies the weak formulation \eqref{eq:rho_h_fixed}. 

	To show uniqueness, assume that there are two solutions $\rho_1$ and $\rho_2$. 
	Testing each equation with $\rho_1 - \rho_2$, integration w.r.t. to $t$, and using the Lipschitz continuity of $g$ together with an application of the weighted Cauchy inequality yields
	\begin{align*}
		\MoveEqLeft
		\frac{1}{2} \int_\Omega (\rho_1(t,\cdot)-\rho_2(t,\cdot))^2\dx
		+ 
		\frac{\eps}{2} \int_{Q_t} \abs{\nabla (\rho_1-\rho_2)}^2 \dt\dx
		\\
		&
		\le 
		\frac{1}{2\eps}\norm{h}_{L^\infty(0,t;L^\infty(\Omega))} \int_{Q_t} (\rho_1- \rho_2)^2\dt\dx
		- \eta \int_{\Sigma_{D,t}} (\rho_1-\rho_2)^2 \dt\ds_x
	\end{align*}
	for \aa $t \in (0,T)$. 
	As the last term on the right-hand side is non-positive, the uniqueness follows from Gronwall's inequality as both $\rho_1$ and $\rho_2$ have the same initial datum.

	To show the additional regularity, we choose, for finite $N$, the test function $\partial_t \rho^N$ and integrate \wrt time. 
	We obtain
	\begin{align}\label{eq:aux_regularity}
		\int_{Q_t} (\partial_t\rho^N)^2 \dt \dx
		&
		+ \eps\int_{Q_t} \nabla\rho^N\cdot \nabla (\partial_t\rho^N)\dt\dx
		- \int_{Q_t} g(\rho^N)\,h\cdot \nabla (\partial_t \rho^N)\dt\dx 
		\\
		&
		\qquad 
		= 
		- \eta \int_{\Sigma_{D,t}} \rho_N\,\partial_t\rho_N \dt\ds_x
		.
	\end{align}
	For the second term on the left-hand side we have
	\begin{align*}
		\eps\int_{Q_t} \nabla\rho^N\cdot \nabla (\partial_t\rho^N)\dt\dx
		&
		= 
		\frac{\eps}2\int_0^t\frac{\d}{\dt}\int_\Omega \abs{\nabla\rho^N}^2\dt\dx 
		\\
		&
		= 
		\frac{\eps}2\int_\Omega \abs{\nabla\rho^N(t,\cdot)}^2\dx -  \frac{\eps}2\int_\Omega \abs{\nabla\rho^N(0,x)}^2\dx
		,
	\end{align*}
	while for the third one, integration by parts in time together with the 
	(weighted) Cauchy's inequality gives
	\begin{align*}
		\MoveEqLeft
		-\int_{Q_t} g(\rho^N)\,h\cdot \nabla (\partial_t \rho^N)\dt \dx
		\\
		&
		= 
		\int_{Q_t} \partial_t \rho^N g'(\rho^N)\, h\cdot \nabla \rho^N\dt \dx
		+ \int_{Q_t} g(\rho^N)\,\partial_t h\cdot \nabla \rho^N\dt\dx
		- \int_{\Omega} \paren[Big].|{g(\rho^N)\,h \cdot \nabla \rho^N\dx}^t_0
		\\
		&
		\le 
		\frac{1}{2}\norm{\partial_t\rho^N}_{L^2(0,T;L^2(\Omega))}^2 + \frac{1}{2}\paren[auto](){\norm{g'(\rho^N)h}_{L^\infty(0,T;L^\infty(\Omega))}^2 + 1}\norm{\nabla \rho^N}_{L^2(0,T;L^2(\Omega))}^2 + \norm{g(\rho^N)\,\partial_t h}_{L^2(0,T;L^2(\Omega))}^2
		\\
		&\quad +\frac{\eps}{4}\norm{\nabla\rho^N(t,\cdot)}_{L^2(\Omega)}^2+ \frac{1}{4\eps}\norm{g(\rho^N)\,h}_{L^\infty(0,T;L^2(\Omega))}^2
		 + \norm{g(\rho(0,\cdot)^N)\,h(0,\cdot)\,\nabla 
		 \rho^N(0,\cdot)}_{L^2(\Omega)}^2.
	\end{align*}
	The term on the right-hand side of \eqref{eq:aux_regularity} becomes
	\begin{equation*}
		- \eta \int_{\Sigma_{D,t}} \rho_N \partial_t\rho_N \dt\ds_x 
		= 
		-\frac{\eta}{2} \int_\Omega (\rho^N(t,x))^2 \dx + \frac{\eta}{2} \int_\Omega (\rho^N(x,0))^2 \dx
		.
	\end{equation*}
	Combining all these estimates, an application of the trace theorem applied to $\norm{\rho_0}_{L^2(\partial\Omega_\textup{D})}$ and using 
	\begin{align*}
		\norm{\rho^N(\cdot,0)}_{H^1(\Omega)} 
		\le 
		\norm{\rho_0}_{H^1(\Omega)}
	\end{align*}
	yields
	\begin{equation}\label{eq:rho_higher_time_reg_before_gronwall}
		\frac{1}{2}\norm{\partial_t \rho^N}_{L^2(0,T;L^2(\Omega))}^2 + \frac{\eps}{4}\norm{\nabla \rho^N}_{L^\infty(0,T;L^2(\Omega))}^2
		\le 
		C\norm{\rho_0^N}_{H^1(\Omega)}^2 + c \, \norm{\nabla \rho^N}_{L^2(0,T;L^2(\Omega))}^2 + d
		,
	\end{equation}
	where
	\begin{align*}
		c 
		&
		= 
		\frac{1}{2}\paren[auto](){\norm{g'(\rho^N)\,h}_{L^\infty(0,T;L^\infty(\Omega))}^2 + 1} 
		,
		\\
		C
		&
		=
		\norm{g(\rho(0,\cdot)^N)\,h(0,\cdot)}_{L^\infty(\Omega)}^2,
		\\
		d 
		&
		= 
		\frac{1}{2}\norm{g(\rho^N)\,\partial_t h}_{L^2(0,T;L^2(\Omega))}^2 + 
		\frac{1}{4\eps}\norm{g(\rho^N)\,h}_{L^\infty(0,T;L^2(\Omega))}^2.
	\end{align*}
	Using \eqref{eq:rho_N_parabolic_reg}, we can estimate the second term on the right-hand side which yields the desired regularity for $\rho^N$. 
	We also infer the existence of a weakly converging subsequence in $L^2(0,T;L^2(\Omega))\cap L^\infty(0,T;H^1(\Omega))$ which, by the weak lower semi-continuity of the norms, yields the bound also for the limit~$\rho$.
	Reinserting this into \eqref{eq:rho_higher_time_reg_before_gronwall} yields the assertion.
\end{proof}
We also state the definition of Carathéodory conditions.
\begin{definition}\label{def:Caratheodory} We say that a function $r:Q_T \to \R^2$ satisfies the Carathéodory conditions whenever
	\begin{itemize}
		\item for each fixed $x \in \R^d$, the function $t \mapsto r(t,x)$ is measurable,
		\item for \aa $t \in [0,T]$, the function $x \mapsto r(t,x)$ is continuous,
		\item there exists an integrable function $m$ \st
			\begin{equation*}
				\abs{r(t,x)} 
				\le 
				m(t)\text{ for all } x \in \Omega \text{ and \aa\ } t \in [0,T]
				.
			\end{equation*}
	\end{itemize}
\end{definition}
\begin{lemma}\label{lem:conv_lipschitz} 
	Recall that $\eta_\eps$ is a standard mollifier.
	For given $\rho \in L^\infty(0,T;L^1(\Omega))$, the function $\rho_\eps \coloneqq \eta_\eps \ast \rho$ is Lipschitz continuous in $x$ with the Lipschitz constant depending on $\eps$ and $\norm{\rho}_{L^1(\Omega)} \in L^\infty(0,T)$, only.
\end{lemma}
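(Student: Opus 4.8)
The plan is to write the mollification explicitly as a convolution integral and to exploit that $\eta_\eps \in C^\infty_c(\R^2)$, so that its gradient is bounded. First I would fix $t\in[0,T]$ outside a null set so that $\rho(t,\cdot)\in L^1(\Omega)$, extend $\rho(t,\cdot)$ by zero to all of $\R^2$, and write
\begin{equation*}
	\rho_\eps(t,x) = (\eta_\eps \ast \rho)(t,x) = \int_{\R^2} \eta_\eps(x-y)\,\rho(t,y)\d y .
\end{equation*}
Since $\eta_\eps$ is smooth and compactly supported, the quantity $L_{\eta_\eps} \coloneqq \norm{\nabla\eta_\eps}_{L^\infty(\R^2)}$ is finite and depends only on $\eps$, and $\eta_\eps$ is Lipschitz continuous with this constant.

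Next I would estimate, for arbitrary $x_1,x_2\in\R^2$, by subtracting the two representations and pulling the absolute value inside the integral:
\begin{align*}
	\abs{\rho_\eps(t,x_1)-\rho_\eps(t,x_2)}
	&\le \int_{\R^2} \abs{\eta_\eps(x_1-y)-\eta_\eps(x_2-y)}\,\abs{\rho(t,y)}\d y \\
	&\le L_{\eta_\eps}\,\abs{x_1-x_2}\,\norm{\rho(t,\cdot)}_{L^1(\Omega)} .
\end{align*}
Taking the essential supremum over $t\in[0,T]$ and invoking the hypothesis $\norm{\rho(\cdot)}_{L^1(\Omega)}\in L^\infty(0,T)$ yields a Lipschitz constant $L_{\eta_\eps}\,\norm{\rho}_{L^\infty(0,T;L^1(\Omega))}$, which depends only on $\eps$ and on $\norm{\rho}_{L^\infty(0,T;L^1(\Omega))}$, as claimed.

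Alternatively, one can differentiate under the integral sign — legitimate because $\nabla\eta_\eps\in C_c(\R^2)$ and $\rho(t,\cdot)\in L^1(\Omega)$ — to obtain $\nabla_x\rho_\eps(t,x)=\int_{\R^2}\nabla\eta_\eps(x-y)\,\rho(t,y)\d y$ with $\abs{\nabla_x\rho_\eps(t,x)}\le L_{\eta_\eps}\norm{\rho(t,\cdot)}_{L^1(\Omega)}$, and then conclude by the mean value theorem. I do not expect a genuine obstacle here; the only points requiring a word of care are the extension of $\rho$ by zero so that the convolution is defined on all of $\R^2$, the harmless interchange of integration with the supremum in $t$, and the remark that $L_{\eta_\eps}$ depends on $\eps$ (it degenerates like $\eps^{-3}$ in two dimensions as $\eps\to 0$, but is finite and fixed for each fixed $\eps>0$), which is exactly the dependence the statement allows.
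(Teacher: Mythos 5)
Your argument is correct and coincides with the paper's own proof: both bound $\abs{\rho_\eps(t,x_1)-\rho_\eps(t,x_2)}$ by pulling the absolute value inside the convolution integral and using the Lipschitz constant of $\eta_\eps$ together with $\norm{\rho(t,\cdot)}_{L^1(\Omega)}$. The extra remarks on differentiating under the integral sign and on the $\eps$-scaling of the constant are fine but not needed.
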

\begin{proof}
	By definition, $\eta_\eps$ is Lipschitz continuous and we denote its Lipschitz constant by $L_\eps$. 
	Then, for \aa $t \in (0,T)$, we have
	\begin{align*}
		\abs{\rho_\eps(t,x) - \rho_\eps(t,y)} 
		&
		\le  
		\int_\Omega \abs{\eta_\eps(x-z) - \eta_\eps(y-z)}\abs{\rho(t,z)} \dz
		\\
		&
		\le 
		L_\eps \int_\Omega \abs{x-y}\abs{\rho(t,z)} \dz 
		\le 
		L_\eps \norm{\rho(t,\cdot)}_{L^1(\Omega)}\abs{x-y}
		.
	\end{align*}
\end{proof}

\begin{lemma}[Compactness of $\phi_\gamma$]\label{lem:phi_gamma_compact} 
	Fix $2 \le p < \infty$ and denote by $(\rho_{\gamma_k})_k \in 
	W^{2,1}_p(Q_T)$ a bounded family of functions, \ie there exists a constant 
	$C_\rho$, independent of $\gamma$, \st
	\begin{equation*}
		\norm{\rho_{\gamma_k}}_{W^{2,1}_p(Q_T)} 
		\le 
		C_\rho
		.
	\end{equation*}
	Denoting by $\phi_{\gamma_k}$ the sequence of solutions to 
	\eqref{eq:forward_system2} with $\rho_{\gamma_k}$ on the right-hand side, 
	there exists an element $\phi \in L^p(0,T;W^{1,p}(\Omega))$ and a 
	subsequence, again denoted by $\phi_{\gamma_k}$, \st
	\begin{align}\label{eq:phi_gamma_strong}
		\phi_{\gamma_k} \to \phi \text{ in } L^p(0,T;W^{1,p}(\Omega)).
	\end{align}
\end{lemma}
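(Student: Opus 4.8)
The plan is to combine the uniform regularity estimate for the regularized Eikonal equation from \cref{lem:ex_phi} with an Aubin--Lions compactness argument. First I would note that, since each $\rho_{\gamma_k}$ belongs to $W^{2,1}_p(Q_T)$, part~(ii) of \cref{lem:ex_phi} applies to equation \eqref{eq:forward_system2} with right-hand side $1/(f(\rho_{\gamma_k})^2+\delta_2)$ and yields $\phi_{\gamma_k}\in W^{1,p}(0,T;W^{2,p}(\Omega))$ together with
\[
	\norm{\phi_{\gamma_k}}_{W^{1,p}(0,T;W^{2,p}(\Omega))}
	\le \overline C_\phi\,\norm{\rho_{\gamma_k}}_{W^{2,1}_p(Q_T)}
	\le \overline C_\phi\,C_\rho .
\]
The essential point is that $\overline C_\phi$ depends only on $p,\delta_1,\delta_2,\Omega$ and $T$, so this bound is uniform in $k$. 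In particular, $(\phi_{\gamma_k})_k$ is bounded in $L^p(0,T;W^{2,p}(\Omega))$ and $(\partial_t\phi_{\gamma_k})_k$ is bounded in $L^p(0,T;L^p(\Omega))$, using the continuous embedding $W^{2,p}(\Omega)\embeds L^p(\Omega)$.

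Second, since $\Omega$ is bounded with $C^4$-boundary by \cref{assumption:A1}, the Rellich--Kondrachov theorem provides the compact embedding $W^{2,p}(\Omega)\embeds W^{1,p}(\Omega)$, while $W^{1,p}(\Omega)\embeds L^p(\Omega)$ is continuous. I would then invoke the Aubin--Lions lemma (Thm.~II.5.16 in \cite{BoyerFabrie:2013:1}, as already used in the proof of \cref{thm:ex_state}) with the triple $W^{2,p}(\Omega)\embeds W^{1,p}(\Omega)\embeds L^p(\Omega)$, which shows that
\[
	\bigl\{\, u \colon u\in L^p(0,T;W^{2,p}(\Omega)),\ \partial_t u\in L^p(0,T;L^p(\Omega)) \,\bigr\}
\]
embeds compactly into $L^p(0,T;W^{1,p}(\Omega))$. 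Applied to the bounded family $(\phi_{\gamma_k})_k$, this produces a subsequence, still denoted $\phi_{\gamma_k}$, and an element $\phi\in L^p(0,T;W^{1,p}(\Omega))$ with $\phi_{\gamma_k}\to\phi$ strongly in $L^p(0,T;W^{1,p}(\Omega))$, which is precisely \eqref{eq:phi_gamma_strong}. By reflexivity one may, along a further subsequence, additionally assume $\phi_{\gamma_k}\weakly\phi$ in $W^{1,p}(0,T;W^{2,p}(\Omega))$, which also identifies $\phi$ as the solution of \eqref{eq:forward_system2} associated with the corresponding limit of $\rho_{\gamma_k}$, although this is not needed for the statement.

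I do not expect a genuine obstacle here: the only point requiring care is the uniformity in $\gamma_k$ of the constant in the a~priori estimate, which is guaranteed by the explicit dependence of $\overline C_\phi$ recorded in \cref{lem:ex_phi}. Should one wish to avoid part~(ii), part~(i) suffices just as well: it gives $(\phi_{\gamma_k})_k$ bounded in $L^\infty(0,T;W^{2,p}(\Omega))$ and $(\partial_t\phi_{\gamma_k})_k$ bounded in $L^2(0,T;H^1(\Omega))\embeds L^2(0,T;L^2(\Omega))$, and the Aubin--Lions--Simon lemma with these exponents yields convergence even in $C([0,T];W^{1,p}(\Omega))$, hence a~fortiori in $L^p(0,T;W^{1,p}(\Omega))$.
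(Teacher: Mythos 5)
Your proof is correct, but it follows a genuinely different route from the paper. The paper does not use any bound on $\partial_t\phi_{\gamma_k}$ at all: it passes to the transformed variable $\psi_{\gamma_k}=\e^{-\phi_{\gamma_k}/\delta_1}-1$, applies the elliptic stability estimate to the equation satisfied by the difference $\psi_{\gamma_k}(t+h,\cdot)-\psi_{\gamma_k}(t,\cdot)$, and combines this with the embedding $W^{1,p}(0,T;L^p(\Omega))\embeds C^{0,1/2}([0,T];L^p(\Omega))$ for $\rho_{\gamma_k}$ to obtain $\int_0^{T-h}\norm{\psi_{\gamma_k}(t+h,\cdot)-\psi_{\gamma_k}(t,\cdot)}_{W^{1,p}(\Omega)}^p\dt=\cO(h^{p/2})$; Simon's finite-difference compactness criterion then gives the strong convergence, and the transformation is inverted at the end. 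You instead invoke the time-differentiated a~priori estimate \eqref{eq:phi_apriori2} from \cref{lem:ex_phi}(ii) to get a uniform bound on $\phi_{\gamma_k}$ in $W^{1,p}(0,T;W^{2,p}(\Omega))$ and then apply the classical Aubin--Lions lemma with the triple $W^{2,p}(\Omega)\embeds\embeds W^{1,p}(\Omega)\embeds L^p(\Omega)$. Under the stated hypothesis (a uniform $W^{2,1}_p(Q_T)$-bound on $\rho_{\gamma_k}$) both arguments are valid, and yours is the more direct one; what the paper's version buys is that it only needs the Hölder-in-time continuity of $\rho_{\gamma_k}$ with values in $L^p(\Omega)$ rather than the full regularity transfer of part~(ii), whose proof rests on a formal differentiation of the regularized Eikonal equation in time. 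Your fallback via part~(i) also works (with $r=2>1$ in the Aubin--Lions--Simon corollary), provided you note that the uniform bound on $\norm{\partial_t\phi_{\gamma_k}}_{L^2(0,T;H^1(\Omega))}$ requires a uniform bound on $\norm{\rho_{\gamma_k}}_{H^1(0,T;H^1(\Omega)^*)}$, which does follow from the assumed $W^{2,1}_p$-bound.
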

\begin{proof}
	Instead of \eqref{eq:forward_system2}, we work with the transformed version \eqref{eq:psi} and denote its solution by $\psi_\gamma$. 
	We seek to apply a variant of the Aubin-Lions Lemma, introduced in \cite{Simon:1978:1}, where instead of time derivatives we have to control finite differences in time. 
	Indeed, as $W^{2,p}(\Omega) \compactly W^{1,p}(\Omega)$, it suffices to show 
	\begin{equation*}
		\int_0^{T-h} \norm{\psi_\gamma(t+h,\cdot ) - \psi_\gamma(t,\cdot)}_{W^{1,p}(\Omega)}^p \dt 
		\le 
		\cO(h)
		.
	\end{equation*}
	To this end, note that the difference $\psi_\gamma(t+h,\cdot ) - \psi_\gamma(t,\cdot)$ satisfies the equation
	\begin{equation*}
		\laplace (\psi_\gamma(t+h,\cdot ) - \psi_\gamma(t,\cdot)) + q_t (\psi_\gamma(t+h,\cdot ) - \psi_\gamma(t,\cdot)) = (1-\psi(t,\cdot))(q_{t+h} - q_t),
	\end{equation*}
	with boundary conditions \eqref{eq:psi} and the corresponding a~priori estimate 
	\begin{equation*}
		\norm{\psi_\gamma(t+h,\cdot ) - \psi_\gamma(t,\cdot)}_{W^{1,p}(\Omega)} 
		\le 
		C(C_\phi)\norm{q_{t+h} - q_t}_{L^p(\Omega)}
		.
	\end{equation*}
	This yields
	\begin{equation}
	\label{eq:simon}
		\int_0^{T-h} \norm{\psi_\gamma(t+h,\cdot ) - \psi_\gamma(t,\cdot)}_{W^{1,p}(\Omega)}^p \dt 
		\le 
		C\int_0^{T-h} \norm{q_{t+h}(\cdot)- q_t(\cdot)}_{W^{1,p}(\Omega)}^p \dt
		.
	\end{equation}
	To estimate the right-hand side, note that $\rho_\gamma \in W^{1,p}(0,T;L^p(\Omega)) \embeds C^{0,1/2}([0,T];L^p(\Omega))$, and thus there exists a constant $L_\rho$, depending only on $C_\phi$ but not on $\gamma$, \st for all $t \ge 0$ and $h > 0$ satisfying $t+h \le T$, the inequality
	\begin{align*}
		\norm{\rho_\gamma(t+h,\cdot) - \rho_\gamma(t,\cdot)}_{L^p(\Omega)} 
		\le 
		L_\rho \sqrt{h}
	\end{align*}
	holds. 
	The definition of $q_t$ in \eqref{eq:def_qt} then directly implies
	\begin{align*}
		\norm{q_{t+h}(\cdot) - q_t(\cdot )}_{L^p(\Omega)} 
		\le 
		CL_\rho \sqrt{h}
		,
	\end{align*}
	which, when inserted into the right-hand side of \eqref{eq:simon}, yields 
	\begin{equation*}
		\int_0^{T-h} \norm{\psi_\gamma(t+h,\cdot ) - \psi_\gamma(t,\cdot)}_{W^{1,p}(\Omega)}^p \dt 
		\le 
		(CL_\rho)^p \, (T-h) \, h^{p/2}
		.
	\end{equation*}
	Since $2 \le p$ holds, this is the desired estimate. 
	Thus, Lemma on p.~1011 in \cite{Simon:1978:1} ensures the existence of a 
	subsequence that strongly converges in $L^p(0,T;W^{1,p}(\Omega))$.
	By inverting the transformation \eqref{eq:psitrans}, the result for $\phi_\gamma$ is obtained.
\end{proof}


\printbibliography

\end{document}